\newcommand{\white}[1]{{\color{white} #1}}
\newsavebox\configa
\sbox\configa{\begin{tikzpicture}[domain=0:1, scale=1.3]
\draw[dotted,<->] (0,0) -- (3,0);
\draw[black,fill=white] (0.5,0) circle (.5ex);
\draw[white,fill=white] (1,0.21) circle (.5ex);
\draw[black,fill=black] (1,0) circle (.5ex);
\node at (1.5,-0.7)       { $(1,1)$};
\draw[black,fill=black] (1.5,0) circle (.5ex);
\draw[black,fill=black] (2,0) circle (.5ex);
\draw[black,fill=white] (2.5,0) circle (.5ex);
\end{tikzpicture}}
\newsavebox\configb
\sbox\configb{\begin{tikzpicture}[domain=0:1, scale=1.3]
\draw[dotted,<->] (0,0) -- (3,0);
\draw[black,fill=white] (0.5,0) circle (.5ex);
\draw[black,fill=black] (1,0) circle (.5ex);
\draw[black,fill=black] (1,0.21) circle (.5ex);
\node at (1.5,-0.7)       { $(0,2)$};
\draw[black,fill=white] (1.5,0) circle (.5ex);
\draw[black,fill=black] (2,0) circle (.5ex);
\draw[black,fill=white] (2.5,0) circle (.5ex);
\end{tikzpicture}}
\newsavebox\configc
\sbox\configc{\begin{tikzpicture}[domain=0:1, scale=1.3]
\draw[dotted,<->,white] (-0.25,0) -- (2.75,0);
\draw[dotted,<->] (0,0) -- (2.5,0);
\draw[black,fill=white] (0.5,0) circle (.5ex);
\draw[black,fill=black] (1,0) circle (.5ex);
\draw[black,fill=black] (1,0.21) circle (.5ex);
\node at (1.25,-0.7)       { $(0,1)$};
\draw[black,fill=black] (1.5,0) circle (.5ex);
\draw[black,fill=white] (2,0) circle (.5ex);
\end{tikzpicture}}
\newsavebox\configaa
\sbox\configaa{\begin{tikzpicture}[domain=0:1, scale=1.0]
\draw[dotted,white,<->] (0,0) -- (3,0);
\draw[white,fill=white] (0.5,0) circle (.5ex);
\draw[white,fill=white] (1,0.21) circle (.5ex);
\draw[white,fill=white] (1,0) circle (.5ex);
\node at (2,-0.8)       {  \white{$(1,1)$}  };
\draw[white,fill=white] (1.5,0) circle (.5ex);
\draw[white,fill=white] (2,0) circle (.5ex);
\draw[white,fill=white] (2.5,0) circle (.5ex);
\end{tikzpicture}}
\newtheorem{theorem}{Theorem}[section]
\newtheorem{lemma}[theorem]{Lemma}
\newtheorem{proposition}[theorem]{Proposition}
\newtheorem{corollary}[theorem]{Corollary}
\theoremstyle{definition}
\newtheorem{definition}[theorem]{Definition}
\newtheorem{problem}[theorem]{Problem}
\newenvironment{example}
  {\pushQED{\qed}\examplex}
  {\popQED\endexamplex}
\theoremstyle{remark}
\newtheorem{remark}[theorem]{Remark}
\newtheorem{remarks}[theorem]{Remarks}
\numberwithin{equation}{section}
\newcommand{\1}[1]{{\mathbbm{1}\mkern -1.5mu}{\left\{#1\right\}}}
\newcommand{\2}[1]{{\mathbbm{1}}_{#1}}
\newcommand{\R}{{\mathbb R}}
\newcommand{\Z}{{\mathbb Z}}
\newcommand{\N}{{\mathbb N}}
\newcommand{\ZP}{{\mathbb Z}_+}
\newcommand{\RP}{{\mathbb R}_+}
\newcommand{\tX}{{\widetilde X}}
\DeclareMathOperator{\Exp}{\mathbb{E}}
\let\Pr\relax
\DeclareMathOperator{\Pr}{\mathbb{P}}
\newcommand{\eps}{\varepsilon}
\newcommand{\re}{{\mathrm{e}}}
\newcommand{\rc}{{\textup{c}}}
\newcommand{\ud}{{\mathrm d}}
\newcommand{\cF}{{\mathcal F}}
\newcommand{\cG}{{\mathcal G}}
\newcommand{\cL}{{\mathcal L}}
\newcommand{\cP}{{\mathcal P}}
\newcommand{\fB}{{\mathfrak{B}}}
\newcommand{\fC}{{\mathfrak{C}}}
\newcommand{\as}{\ \text{a.s.}}
\newcommand{\ou}{\overline{u}}
\newcommand{\bigmid}{\; \bigl| \;}
\newcommand{\Bigmid}{\; \Bigl| \;}
\newcommand{\biggmid}{\; \biggl| \;}
\newcommand{\IP}{{\mathbb P}}
\newcommand{\IE}{{\mathbb E}}
\newcommand{\tG}{{\widetilde G}}
\newcommand{\ttau}{{\widetilde \tau}}
\newcommand{\tN}{{\widetilde N}}
\newcommand{\bbX}{{\mathbb X}}
\newcommand{\ee}{{\mathbf e}}
\newcommand{\ones}{\mathbf{1}}
\def\namedlabel#1#2{\begingroup  
    (#2)%
    \def\@currentlabel{#2}%
    \phantomsection\label{#1}\endgroup
}
\newlist{myenumi}{enumerate}{10}
\setlist[myenumi]{leftmargin=0pt, labelindent=\parindent, listparindent=\parindent, labelwidth=0pt, itemindent=!, itemsep=1pt, parsep=4pt}
\newlist{thmenumi}{enumerate}{10}
\setlist[thmenumi]{leftmargin=0pt, labelindent=\parindent, listparindent=\parindent, labelwidth=0pt, itemindent=!}
\title{One-dimensional particle clouds with elastic collisions}
\author{Mikhail Menshikov\footnote{\raggedright Department of Mathematical Sciences, Durham University, Durham, UK. \href{mailto:mikhail.menshikov@durham.ac.uk}{\texttt{mikhail.menshikov@durham.ac.uk}}, \href{mailto:andrew.wade@durham.ac.uk}{\texttt{andrew.wade@durham.ac.uk}}.} \and Serguei Popov\footnote{Centro de Matem\'atica, University of Porto, Porto, Portugal. \href{mailto:serguei.popov@fc.up.pt}{\texttt{serguei.popov@fc.up.pt}}.} \and Andrew Wade\footnotemark[1]}
\date{\today}
\begin{document}
\maketitle

\begin{abstract}
We study an interacting particle system 
of a finite number of labelled particles 
on the integer lattice,
in which particles 
  have intrinsic masses and left/right jump rates.
  If a particle is the minimal-label particle at its site when it tries to jump left,
   the jump is executed. If not,   `momentum' is transferred to increase the rate of jumping left
   of the minimal-label particle. 
   Similarly for jumps to the right. 
   The collision rule is `elastic' in the sense that
   the net rate of flow of  mass
 is independent of the present configuration, in contrast to the exclusion process, for example. 
 We show that the particle masses and jump rates determine explicitly,
 via
  a concave majorant of a simple `potential' function associated to the masses and jump rates, 
 a unique partition of the system into maximal stable subsystems.
 The internal configuration of each stable subsystem remains tight, while the location of each stable subsystem obeys a strong law of large numbers with an explicit speed. We indicate connections to adjacent models, including diffusions with rank-based coefficients.
   \end{abstract}

\medskip

\noindent
{\em Key words:}  
Interacting particle system, 
elastic collisions,
lattice Atlas model, 
asymptotic speeds, 
partial stability, 
invariant measures,
exclusion process.

\medskip

\noindent
{\em AMS Subject Classification:} 60K35 (Primary), 60J27, 60K25, 90B22 (Secondary).

\section{Definitions and main results}
\label{sec:definitions}

We consider dynamics of an interacting system of~$N$ ordered particles 
performing continuous-time 
nearest-neighbour random walks
on the integer lattice~$\Z$
with \emph{elastic collisions}. 
Each particle $i \in \{1,2,\ldots, N\}$ (labelled left to right)
is endowed with intrinsic jump rates $a_i, b_i \in \RP := [0,\infty)$
and an intrinsic \emph{mass}~$m_i \in (0,\infty)$. 

The dynamics of the system are as follows.
If the site occupied by particle~$i$ is occupied by no other particle,
then particle $i$ jumps to the left with rate~$a_i$ 
and to the right with rate~$b_i$, independently of the other particles;
hence particles perform independent continuous-time random walks
as long as they avoid each other. When two or more particles
occupy the same site, only the particle with the smallest label in the stack can jump left, and only the particle
with the largest label in the stack can jump right, and the rates are modified
by what we call the elastic collision rule. 
Specifically, if particles $k,\ldots,k+\ell$ (and no other) are at a given site
 then
 particle~$k$ jumps to the left with rate
 $m_k^{-1} \sum_{j=k}^{k+\ell} m_{j} a_{j}$,
while particle  $k+\ell$ jumps
to the right with rate 
$m^{-1}_{k+\ell} \sum_{j=k}^{k+\ell} m_{j} b_{j}$.

Denote by $X_i (t) \in \Z$ the position of particle~$i$ at time~$t \in \RP$,
with initial configuration
$X_1 (0) \leq X_2 (0) \leq \cdots \leq X_N(0)$; 
observe that the collision rule preserves the weak
order~$X_1 (t) \leq X_2 (t) \leq \cdots \leq X_N (t)$ for all $t \in \RP$. 
We give a slightly more formal definition shortly, and refer to Figure~\ref{fig:elastic} for a schematic.

\begin{figure}[t]
\centering
\scalebox{1.0}{
 \begin{tikzpicture}[domain=0:1, scale=1.5]
\draw[dotted,<->] (0,0) -- (8,0);
\node at (8.4,0)       {$\Z$};
\draw[black,fill=white] (1,0) circle (.5ex);
\draw[black,fill=black] (2,0) circle (.5ex);
\draw[black,fill=black] (3,-0.1) circle (.5ex);
\draw[black,fill=black] (3,0.1) circle (.5ex);
\draw[black,fill=white] (4,0) circle (.5ex);
\draw[black,fill=white] (5,0) circle (.5ex);
\draw[black,fill=black] (6,0) circle (.5ex);
\draw[black,fill=black] (6,0.2) circle (.5ex);
\draw[black,fill=black] (6,-0.2) circle (.5ex);
\draw[black,fill=white] (7,0) circle (.5ex);
\node at (1.3,-0.6)       {\small rate $a_1$};
\node at (2.5,0.6)       {\small $b_1$};
\draw[black,-{Stealth[scale=2.0]}] (2,0) arc (330:221:0.58);
\draw[black,-{Stealth[scale=2.0]}] (3,-0.1) arc (320:218:0.6);
\node at (2.5,-0.6)       {\small $a_2+a_3$};
\node at (3.5,0.6)       {\small $b_2+b_3$};
\draw[black,-{Stealth[scale=2.0]}] (6,-0.2) arc (310:217:0.65);
\node at (5.5,-0.6)       {\small $a_4+a_5+a_6$};
\node at (6.5,0.6)       {\small $b_4+b_5+b_6$};
\draw[black,-{Stealth[scale=2.0]}] (2,0) arc (150:39:0.58);
\draw[black,-{Stealth[scale=2.0]}] (3,0.1) arc (140:36:0.6);
\draw[black,-{Stealth[scale=2.0]}] (6,0.2) arc (130:35:0.65);
\end{tikzpicture}}
\caption{Schematic for the $N=6$ elastic particle system with identical masses $m_i \equiv m \in (0,\infty)$ for all $i$. Pictured is the configuration $X_1 (t) = 0$, $X_2 (t) = X_3 (t) = 1$, $X_4 (t) = X_5 (t) = X_6(t) = 4$ and the transition rates from this configuration are indicated on the arrows. In the case of multiple particles occupying the same site, we imagine that particles are stacked in increasing order of index, and it is the bottom and top particles that are allowed to move: the transition to the left would move the particle from the base of the stack, while the transition to the right would move the particle from the top of the stack. To contrast the elastic dynamics with exclusion dynamics, we refer to Figure~\ref{fig:exclusion} below; note that in the identical-mass setting, the total activity rate of the elastic system is $\sum_i (a_i+b_i)$ independently of the current configuration.}
\label{fig:elastic}
\end{figure}
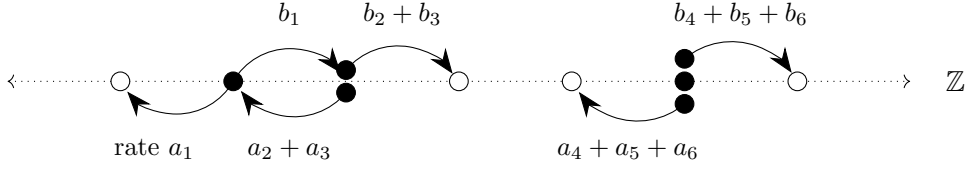

We use the word `\emph{elastic}' to describe the distinguishing property of this interacting-particle system 
that \emph{the total momentum is independent of the present configuration},
where momentum is the (net) rate at which mass moves to the right 
(see Lemma~\ref{lem:no-mass-random-walk} below and surrounding discussion). 
This model is equivalent to one proposed
in~\S6.2~of~\cite{mmpw} 
as a lattice particle system
possessing the elastic collision property of
the continuum \emph{Atlas model}, in contrast to the classical
simple \emph{exclusion process} which does not have the elastic property, since the exclusion
dynamics suppresses movement of particles in some configurations 
(see~\S\ref{sec:equivalent}
for a description of the model of~\cite{mmpw} and its equivalence
to the one here, and for background on the exclusion process and the Atlas model).

To give a physical motivation to the elastic collision
rule, note that, first, if we interpret transitions
rates as speeds, when it does not share occupancy with any other particle, particle~$i$ has `momentum
to the left' $m_ia_i$ and `momentum to the right'  $m_ib_i$.
On the other hand, if particle~$i$ is part of a stack of multiple particles,  the momentum generated by the stack to the 
left and right is obtained by summing the individual
momenta of the particles in the stack. For example, if particles $k, k+1, \ldots, k+\ell$ are the occupants of a particular site,
then the elastic collision rule imparts to particle~$k$ a `momentum to the left' of
$m_k \cdot m_k^{-1} \sum_{j=k}^{k+\ell} m_{j} a_{j} = \sum_{j=k}^{k+\ell} m_{j} a_{j}$,
while particles $k+1, \ldots, k+\ell$ have `momentum to the left' of zero. Similarly to the right. In this sense, `momentum' is conserved. 

We introduce some more notation and describe our main results. For $n \in \N := \{1,2,3,\ldots\}$, define $[n] := \{ i \in \Z : 1 \leq i \leq n\}$.
The configuration space of the system of $N \in \N$ particles is
$\bbX_N\subset \Z^N$, given by
\begin{equation}
\label{eq:configuration-space}
\bbX_N = \big\{ (x_i )_{i \in [N]} \in \Z^N 
: x_1 \leq \cdots \leq x_N \big\}. 
\end{equation}
We write $X(t) \in \bbX_N$ for the state at time $t \in \RP$, with coordinates
$X(t) = (X_i (t))_{i \in [N]}$. 
We start the system from a deterministic (but arbitrary) initial state $X (0) \in \bbX_N$. 
The process $X := (X(t))_{t \in \RP}$ is a continuous-time Markov chain
on the countable state space~$\bbX_N$, in which $X_i(t)$ jumps to $X_i(t)-1$ at rate~$A_i (X(t))$
and  $X_i(t)$ jumps to $X_i(t)+1$ at rate $B_i (X(t))$ where, for $x = (x_i)_{i \in [N]} \in \bbX_N$,
\begin{equation}
    \label{eq:rates}
    A_i (x) := \2 {\{ x_{i-1} < x_i \}} \sum_{j = i}^N \frac{m_ja_j}{m_i} \2 {\{ x_j  = x_i \}},
    ~~~
    B_i (x) :=  \2 {\{ x_{i+1}  > x_i  \}} \sum_{j = 1}^i \frac{m_jb_j}{m_i} \2 {\{ x_j  = x_i  \}} ,
    \end{equation}
with conventions $x_0 = -\infty$ and $x_{N+1}= +\infty$.
Denote the intrinsic velocities of the particles 
\begin{equation}
    \label{eq:intrinsic-speed}
    u := (u_i)_{i \in [N]}, \text{ where } u_i := b_i - a_i, \text{ for } i \in [N].
\end{equation}
We then define $U_0:=0$, $M_0:= 0$, and, for $k \in [N]$,
\begin{equation}
    \label{eq:U-M-def}
    U_k := - \sum_{i=1}^k m_i u_i , \text{ and } 
    M_k := \sum_{i=1}^k m_i .
\end{equation}
Also, for $\ell \in \{0,1,\ldots, k\}$ define
\begin{equation}
    \label{eq:U-M-diff-def}
    U_{\ell,k} := U_k - U_\ell = - \sum_{i=\ell+1}^k m_i u_i , \text{ and } 
    M_{\ell,k} := M_k - M_\ell = \sum_{i=\ell+ 1}^k m_i .
\end{equation}

\begin{remark}
\label{rem:zeros}
Because of the elastic interaction mechanism,
many of the $a_i$s and $b_i$s can be set to zero and the model still be non-trivial. For example, one can take $b_1 >0$, $a_N >0$, and all other $a_i, b_i = 0$, and the elastic process still, with positive probability, moves all particles any finite distance to the left or right, while, in contrast, the exclusion process with the same parameters (see~\S\ref{sec:exclusion}) would reach an absorbing state in finitely many steps.
\end{remark}

\begin{remark}
\label{rem:potential}
    The reason for the negative sign in the definition of~$U_k$ in~\eqref{eq:U-M-def} is to conform with the interpretation of $U_k/M_k$ as a sort of \emph{potential} function associated with the system. Roughly speaking, the evolution of the system will favour moving particles in directions of negative gradient of the potential; hence the choice of sign. We elaborate on this following Definition~\ref{def:majorant} below.
\end{remark}

For non-empty $\fC \subseteq [N]$ denote the span of particles labelled by~$\fC$ as
\begin{equation}
\label{eq:radius-def}
\Delta_\fC (t) :=  \sup_{i \in \fC} X_i (t) - \inf_{i \in \fC} X_i (t)
= X_{\max \fC} (t) - X_{\min \fC} (t), \text{ for } t \in \RP.
\end{equation}

 Our first main result is a criterion for \emph{stability} of the system,
 meaning that 
 the system  evolves as a single \emph{cloud} of particles, 
within which the inter-particle distances remain exponentially tight, with a single characteristic \emph{speed}. The formal statement is the following.

\begin{theorem}[Stability criterion]
\label{thm:stability_condition}
Let $N \in \N$, $m_i \in (0,\infty)$ for all $i \in [N]$,
and $a_i, b_i \in \RP$ for all $i \in [N]$.
With $U_k, M_k$ defined at~\eqref{eq:U-M-def}, suppose that
\begin{equation}
\label{eq:stability_condition}
\frac{U_k}{M_k}<\frac{U_N}{M_N} , \text{ for all } k \in [N-1].
\end{equation}
Then the system is a single stable cloud in the following sense.
\begin{thmenumi}[label=(\alph*)]
    \item 
    \label{thm:stability_condition-a} 
    {\bf {Limiting speed.}}
For every $i \in [N]$, there holds the strong law of large numbers 
    \begin{equation}
        \label{eq:stable-speed}
 \lim_{t \to \infty} t^{-1} {X_i (t)} = - \frac{U_{N}}{M_{N}}, \as     \end{equation}
    \item 
    \label{thm:stability_condition-b} 
    {\bf {Relative stability.}} 
      There exist constants $C \in \RP$ and $\delta > 0$ (depending on the $a_i, b_i$, and $m_i$)
    such that 
 \[
\sup_{t \in \RP} \Pr \left[ \Delta_{[N]} (t) \geq s \right] \leq C \Bigl[ 1 + \re^{C \Delta_{[N]} (0) } \Bigr] \re^{-\delta s} , \text{ for all } s \in \RP,      \]
    and, moreover,
    \[ \limsup_{t \to \infty} \frac{\Delta_{[N]} (t)}{\log t} < \infty , \as\]
\end{thmenumi}
Furthermore, condition~\eqref{eq:stability_condition} is necessary for~\ref{thm:stability_condition-b} to hold.
 \end{theorem}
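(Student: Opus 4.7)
My plan is to prove the relative-stability estimate~\ref{thm:stability_condition-b} first, via a Foster--Lyapunov argument on the translation-invariant ``cloud-shape'' process, and then deduce the strong law~\ref{thm:stability_condition-a} by combining that estimate with the mass random walk statement of Lemma~\ref{lem:no-mass-random-walk}; the necessity of~\eqref{eq:stability_condition} is a direct consequence of the same drift calculation, read in reverse.

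The key quantities to analyse are the centre-of-mass separations
\[ W_k(x) := \frac{1}{M_{k,N}} \sum_{i=k+1}^N m_i x_i - \frac{1}{M_k} \sum_{i=1}^k m_i x_i, \qquad k \in [N-1],\]
which satisfy $0 \leq x_{k+1}-x_k \leq W_k(x) \leq \Delta_{[N]}(x)$, so that simultaneous control of all the $W_k$ is equivalent to control of $\Delta_{[N]}$. A direct computation from~\eqref{eq:rates} shows that, when $x_k<x_{k+1}$, the drift of $W_k$ equals $(U_k M_N - U_N M_k)/(M_k M_{k,N})$, which is strictly negative under~\eqref{eq:stability_condition}. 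In the ``clustered'' regime $x_k = x_{k+1}$, with particles $\ell, \ldots, r$ straddling index $k$, the same computation using the elastic rates yields an additional non-negative correction bounded by the cluster parameters; crucially, in that regime the gap $x_{k+1}-x_k=0$ contributes nothing to $\Delta_{[N]}$.

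The principal technical obstacle is to combine these per-coordinate estimates into a single Lyapunov function $V\colon \bbX_N \to [1,\infty)$ for the shape process satisfying a drift inequality $\mathcal{L} V \leq -\delta V + C$ globally on $\bbX_N$. The natural candidate is an exponential sum $V(x) = \sum_{k=1}^{N-1} \alpha_k \exp(\lambda W_k(x))$, with positive weights $\alpha_k$ and small $\lambda>0$ chosen so that the clustered-regime correction at coordinate $k$ is absorbed by the strictly negative drift contributions from the other coordinates, leveraging the uniform positivity of $\min_k (U_{k,N}/M_{k,N} - U_k/M_k)$ granted by~\eqref{eq:stability_condition}. From such a drift inequality, standard Foster--Lyapunov theory yields the exponential tail bound in~\ref{thm:stability_condition-b}, and a Borel--Cantelli argument along integer times, combined with uniform boundedness of the total jump rate, upgrades it to the a.s.\ logarithmic estimate.

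Part~\ref{thm:stability_condition-a} then follows on combining with Lemma~\ref{lem:no-mass-random-walk}: that result asserts that $\sum_i m_i X_i(t) + U_N t$ is a martingale with bounded jumps, so the martingale SLLN gives $\sum_i m_i X_i(t)/t \to -U_N$ almost surely; dividing by $M_N$ and using the $O(\log t)$ almost-sure bound on $\Delta_{[N]}(t)$ from~\ref{thm:stability_condition-b} forces $X_i(t)/t \to -U_N/M_N$ for every $i$. Finally, for necessity, if~\eqref{eq:stability_condition} fails at some $k$, the drift computation above makes the drift of $W_k$ non-negative in both regimes (and strictly positive in at least one nondegenerate configuration), so a standard submartingale argument yields $\mathbb{E}[W_k(t)] \to \infty$; since $W_k \leq \Delta_{[N]}$, this contradicts the tightness implied by~\ref{thm:stability_condition-b}.
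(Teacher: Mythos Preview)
Your overall strategy for part~\ref{thm:stability_condition-b} differs from the paper's. The paper uses a single Lyapunov function, the mass-weighted $\ell^2$ norm of the configuration relative to its centre of mass, $\Psi(x)=\|x-g(x)\ones\|_N$, and shows via a partial-summation identity (the $V_k$ trick in Lemma~\ref{lem:lyapunov}) that $\Psi$ has uniformly negative drift outside a compact set, in \emph{all} configurations. Your proposed $V(x)=\sum_k\alpha_k\exp(\lambda W_k(x))$ is a genuinely different construction, and it may well work, but the step you flag as the ``principal technical obstacle'' is exactly where the argument is incomplete: when $x_k=x_{k+1}$ the positive correction to the drift of $W_k$ is of order one and $W_k$ itself can still be arbitrarily large, so you need a structural fact guaranteeing that some other $W_j$ with $x_j<x_{j+1}$ is at least as large as $W_k$ (so that its exponential dominates). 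You assert this can be arranged by choosing $\alpha_k,\lambda$, but you give no mechanism, and ``uniform positivity of $\min_k(U_{k,N}/M_{k,N}-U_k/M_k)$'' is a statement about drifts, not about the relative sizes of the $W_j$. The paper's single norm sidesteps this coupling problem entirely.

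Two further gaps. First, for part~\ref{thm:stability_condition-a} you invoke Lemma~\ref{lem:no-mass-random-walk}, but that lemma is stated only for identical masses $m_i\equiv m$; for general masses the centre of mass is not itself a random walk. The martingale property you actually need, $\Exp[G(t+h)-G(t)\mid X(t)]=-hU_N/M_N$, is true in general (it is Theorem~\ref{thm:mass-speed} in the paper), but it requires the computation of Lemma~\ref{lem:drift_y}, not Lemma~\ref{lem:no-mass-random-walk}. Second, your necessity argument breaks down at the boundary $U_k/M_k=U_N/M_N$: in the separated regime the drift of $W_k$ is then exactly zero, and a bounded-increment submartingale with zero drift on most of its state space need not have $\Exp[W_k(t)]\to\infty$ (think of a martingale). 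Showing that the clustered-regime correction is visited often enough to force divergence is circular, since it presupposes knowledge of the ergodic behaviour you are trying to rule out. The paper handles necessity quite differently, deferring it to Theorem~\ref{thm:cloud_decomposition}\ref{thm:cloud_decomposition-c}, which in the equal-speed boundary case requires the submartingale defocusing estimate of Appendix~\ref{sec:submartingale}.
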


\begin{remark}
    \label{rem:stability_condition}
    That condition~\eqref{eq:stability_condition} is sufficient for relative stability was conjectured in~\cite{mmpw} (for a modified but equivalent version of the process, as explained in~\S\ref{sec:expanded}), and is a lattice analogue of the stability result for Atlas-type models (e.g.~Theorem~8 of~\cite{pp}). 
    That condition~\eqref{eq:stability_condition} is necessary for relative stability~\ref{thm:stability_condition-b} is a consequence of the more general partial stability result, Theorem~\ref{thm:cloud_decomposition}, that we present below. That result also shows that~\ref{thm:stability_condition-a} \emph{can} hold if~\eqref{eq:stability_condition} is violated, in a system with multiple stable clouds all with the same characteristic speed, a simple example being the homogeneous-speed case in which $u_i \equiv u$ for all $i \in [N]$ (cf.~Example~\ref{ex:homogeneous} below).
\end{remark} 

For a configuration $x = (x_1, \ldots , x_N) \in \bbX_N$, 
define its vector of inter-particle distances
\begin{equation}
\label{eq:D-def}
D (x) := (D_i(x))_{i \in [N-1]} \in \ZP^{N-1}, \text{ where } D_i (x) := x_{i+1} - x_i  \text{ for } i \in [N-1].
\end{equation}
Then associated to process $X$ is the process $\eta (t) := (\eta_i (t) )_{i \in [N-1]}$,
on $\ZP^{N-1}$, where
\begin{equation}
    \label{eq:eta-def}
    \eta (t) := D (X(t) ), \text{i.e., } \eta_i (t) = X_{i+1} (t) - X_i (t) \text{ for } i \in [N-1].
 \end{equation}
 Clearly $(X_1(t), \eta(t))$ defines a Markov process on $\Z \times \ZP^{N-1}$
 that contains the same information as the original process~$X$. Moreover, it is not hard to see that $\eta := (\eta(t))_{t \in \RP}$ is itself a Markov process, describing the configuration relative to the left-most particle. The relative stability part of Theorem~\ref{thm:stability_condition} has the following interpretation in terms of the process~$\eta$.

\begin{corollary}
\label{cor:stability}
The Markov chain~$\eta$ on $\ZP^{N-1}$ is positive recurrent if and only if~\eqref{eq:stability_condition} holds. If positive recurrent, then~$\eta$ is geometrically ergodic, meaning that the stationary distribution has a finite exponential moment. 
\end{corollary}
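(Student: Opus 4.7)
The plan is to deduce both directions of the corollary directly from Theorem~\ref{thm:stability_condition}, exploiting the identity $\Delta_{[N]}(t) = \sum_{i=1}^{N-1} \eta_i(t)$, which makes tightness and exponential-tail control for $\Delta_{[N]}$ equivalent to the corresponding properties for the process~$\eta$ on $\ZP^{N-1}$.

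For the ``if'' direction, I assume~\eqref{eq:stability_condition} holds. Theorem~\ref{thm:stability_condition}\ref{thm:stability_condition-b} then supplies, for any fixed initial state $\eta(0) = d \in \ZP^{N-1}$ (writing $|d| := \sum_i d_i$), constants $C, \delta > 0$ such that
\[
\sup_{t \in \RP} \Pr_d \bigl[ \Delta_{[N]}(t) \geq s \bigr] \leq C \bigl( 1 + \re^{C |d|} \bigr) \re^{-\delta s} , \text{ for all } s \in \RP .
\]
Hence the family of laws of $\eta(t)$ on $\ZP^{N-1}$ is uniformly tight in~$t$ with exponential tails. A Krylov--Bogolyubov/Cesaro-averaging argument will then produce an invariant probability~$\pi$ for the semigroup of~$\eta$, and Fatou applied to the uniform tail bound yields $\int \re^{\alpha |\eta|}\,\pi(d\eta) < \infty$ for every $\alpha \in (0, \delta)$. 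Existence of an invariant probability on a countable state space implies positive recurrence on each communicating class charged by~$\pi$; the jump structure~\eqref{eq:rates} ensures such a class is accessible from any starting state, giving positive recurrence of~$\eta$.

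For the ``only if'' direction, I assume~\eqref{eq:stability_condition} fails. The necessity assertion in Theorem~\ref{thm:stability_condition}---and more sharply the partial-stability result Theorem~\ref{thm:cloud_decomposition}, referenced in Remark~\ref{rem:stability_condition}---then gives that the system breaks into several stable subsystems whose relative displacements grow without bound, so $\Delta_{[N]}(t) \to \infty$ in probability. This makes the family $\{\mathrm{Law}(\eta(t))\}_{t \in \RP}$ not tight, which is incompatible with positive recurrence of a continuous-time Markov chain on a countable state space.

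The step I expect to require the most care is the geometric-ergodicity statement. The plan is to upgrade the uniform exponential tail into a Foster--Lyapunov drift inequality for $V(\eta) := \re^{\alpha |\eta|}$, with $\alpha > 0$ chosen so that $\alpha < \delta$ and $\alpha < C^{-1}$; for such $\alpha$ the prefactor $1 + \re^{C|d|}$ in the tail bound is dominated by a constant multiple of $V(d)$. For a sufficiently large deterministic time~$\tau$, integrating the tail bound against the Markov property at~$\tau$ will yield $\IE_d [ V(\eta(\tau)) ] \leq \theta V(d) + b$ for some $\theta \in (0,1)$ and $b < \infty$, uniformly in~$d$. Since every bounded subset of $\ZP^{N-1}$ is trivially small for the $\tau$-skeleton chain, standard Foster--Lyapunov theory then delivers geometric ergodicity together with the claimed finite exponential moment of~$\pi$.
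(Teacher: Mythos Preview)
Your overall strategy---deduce everything from Theorem~\ref{thm:stability_condition} via the identity $\Delta_{[N]}(t)=\sum_i \eta_i(t)$---is sound, and the ``if'' direction together with the exponential moment of~$\pi$ (via Fatou on the uniform tail bound) is correct and already completes what the corollary asks: note that the paper \emph{defines} ``geometrically ergodic'' here to mean precisely that the stationary law has a finite exponential moment, so your Fatou step finishes that part. Your final paragraph, attempting to reverse-engineer a one-step geometric drift $\Exp_d V(\eta(\tau))\le \theta V(d)+b$ from the tail bound, is therefore unnecessary---and in fact does not go through as written: with $V(d)=\re^{\alpha|d|}$ you would need $\alpha\ge C$ to dominate the prefactor $\re^{C|d|}$ and simultaneously $\alpha<\delta$ for the tail integral to converge, but Theorem~\ref{thm:stability_condition}\ref{thm:stability_condition-b} gives no relation between $C$ and $\delta$. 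The paper avoids this entirely: the drift condition is obtained \emph{directly} in the proof of Proposition~\ref{prop:partial-stability} from the explicit Lyapunov function $F$ of Lemma~\ref{lem:lyapunov}, yielding $\Exp_x[\re^{\delta F(\eta(\tau))}]\le (1-\tfrac{\delta\eps}{2})\re^{\delta F(D(x))}$ outside a bounded set, from which positive recurrence and the exponential moment follow by standard Foster--Lyapunov theory.

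Your ``only if'' direction also needs a small repair. The assertion ``$\Delta_{[N]}(t)\to\infty$ in probability'' does not follow from Theorem~\ref{thm:cloud_decomposition}\ref{thm:cloud_decomposition-c} as stated: in the equal-speed case,~\eqref{eq:recurrence} gives $\liminf_{t\to\infty} L_j(t)=0$ a.s., so you cannot read off convergence at deterministic times. The clean argument is to use the occupation-time statement directly against the ergodic theorem: if $\eta$ were positive recurrent (and irreducible on its class), then for fixed~$K$ one would have $t^{-1}\int_0^t \1{L_j(s)\le K}\,\ud s\to \pi(\{\eta_{k_j}\le K\})>0$ a.s.\ for $K$ large; but~\eqref{eq:same-speed-separation} (or ballistic separation when $v_j>v_{j+1}$) forces this Ces\`aro limit to be~$0$, a contradiction. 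This is the same route the paper takes to establish necessity in Theorem~\ref{thm:stability_condition}.
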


The following example covers systems with few particles; 
a contrast with the exclusion process described in \S\ref{sec:exclusion} is given in Example~\ref{ex:not-exclusion} below.

\begin{example}[Small systems]
\label{ex:small-systems}
The case $N=1$ is trivial; then~\eqref{eq:stability_condition} holds vacuously, $U_1 = m_1( a_1 -b_1)$ and~\eqref{eq:stable-speed} reduces to
the ordinary strong law for a single random walker, $\lim_{t \to \infty} t^{-1} X_1 (t) = u_1$, a.s. When $N=2$, the stability criterion~\eqref{eq:stability_condition} reduces to the simple condition $u_1 > u_2$ (regardless of the masses), meaning that if the two particles did not interact, the leftmost would  overtake the rightmost.
For $N=3$, the masses of the particles enter, and the stability criterion~\eqref{eq:stability_condition} is 
\[ ( m_2 + m_3 ) u_1 > m_2 u_2 + m_3 u_3 \text{ and } m_1 u_1 + m_2 u_2 > (m_1 + m_2 ) u_3 ; \]
note that a consequence of the above two inequalities is that $u_1 > u_3$,
and a sufficient condition for stability is $u_1 > u_2 > u_3$.
\end{example}

Before moving on to the general case in which~\eqref{eq:stability_condition} is not satisfied, 
we give some intuition behind the 
speed $-U_N/M_N$ appearing in~\eqref{eq:stable-speed},
and the origin of condition~\eqref{eq:stability_condition}. Let 
\begin{equation}
    \label{eq:centre-of-mass}
    G:= (G(t))_{t \in \RP}, \text{ where }
G(t) := \frac{1}{M_N} \sum_{i \in [N]} m_i X_i (t), \text{ for } t \in \RP,  \end{equation} 
the \emph{centre of mass} process associated with the particle system.
In general~$G$ is not itself a Markov process, since the transition law of~$G(t)$
depends on the whole configuration $X(t)$. Nevertheless,
we show below (see Theorem~\ref{thm:mass-speed}) that there is a well-defined local \emph{speed} of~$G(t)$ which is 
always equal to $-U_N/M_N$. Moreover, in the equal-mass case where $m_i \equiv m$ independently of $i$, it is immediate from~\eqref{eq:rates} to verify the following  stronger fact, which says that~$G$ \emph{is} Markov. This is a discrete analogue of a similar observation
for diffusion systems (e.g.~Lemma~7 of~\cite{pp}, and~\S\ref{sec:atlas} below).

\begin{lemma}
\label{lem:no-mass-random-walk}
 If $m_i \equiv m \in (0,\infty)$ for all $i \in [N]$, then $G$~performs a continuous-time, spatially homogeneous random walk on $N^{-1} \Z$
      which  jumps $-m/N$ at rate $\sum_{i=1}^N a_i$ and
    $+m/N$ at rate $\sum_{i=1}^N b_i$.
\end{lemma}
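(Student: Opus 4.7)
The centre-of-mass process $G$ changes only when some particle executes an elementary jump; such a jump of particle $i$ by $\pm 1$ moves $G$ by $\pm m_i/M_N$, which in the equal-mass case equals $\pm 1/N$ for every~$i$. The content of the lemma therefore reduces to showing that, when $m_i \equiv m$, the \emph{total} rate at which some particle jumps left is the configuration-independent constant $\sum_{i=1}^N a_i$, and symmetrically for right jumps.

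To establish this, I would sum the rates in~\eqref{eq:rates}. With $m_j/m_i = 1$,
\begin{equation*}
\sum_{i=1}^N A_i(x) \;=\; \sum_{i=1}^N \2{\{x_{i-1}<x_i\}} \sum_{j=i}^N a_j \2{\{x_j = x_i\}},
\end{equation*}
and I would then exchange the order of summation and identify, for each fixed~$j$, the coefficient of~$a_j$. Since the configuration is weakly ordered, $\{i \in [N] : x_i = x_j\}$ is a contiguous block $\{k,\ldots,k'\}$ of indices containing~$j$; among these, only the minimal index $i = k$ satisfies $x_{i-1}<x_i$ (using the convention $x_0 = -\infty$), and this~$k$ lies within the summation range $i \le j$ because $k \le j$. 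Consequently each $a_j$ is counted exactly once, and $\sum_{i=1}^N A_i(x) = \sum_{j=1}^N a_j$ for every $x \in \bbX_N$. The symmetric argument (the top of a stack being the unique index with $x_{i+1} > x_i$) gives $\sum_{i=1}^N B_i(x) = \sum_{j=1}^N b_j$.

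The remaining step is essentially formal. The transition rates of the Markov chain $X$ induce, via the map $x \mapsto N^{-1}\sum_i x_i$, transition rates for $G$ that depend on $X(t)$ only through the constants $\sum_i a_i$ and $\sum_i b_i$; hence $G$ is itself a continuous-time Markov chain on $N^{-1}\Z$ with the stated jump sizes and rates, with no explosion issue because $\sum_i (a_i+b_i)$ is finite. The only step with any substance is the combinatorial identity for $\sum_i A_i(x)$, which encodes the elastic property at the heart of the model: every intrinsic left-rate $a_j$ is delivered in full to the unique bottom particle of $j$'s stack, so summing over all stacks returns the intrinsic total. Once this identity is noted, the rest of the lemma is bookkeeping.
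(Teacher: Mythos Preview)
Your argument is correct and matches the paper's treatment, which offers no detailed proof beyond remarking that the lemma is ``immediate from~\eqref{eq:rates}''; your verification that $\sum_i A_i(x) = \sum_j a_j$ via exchanging the order of summation and identifying the unique bottom-of-stack index is precisely the computation being alluded to. (Your jump size $\pm 1/N$ is correct and consistent with the stated state space $N^{-1}\Z$; the ``$\pm m/N$'' in the lemma's statement appears to be a typo.)
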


In particular, Lemma~\ref{lem:no-mass-random-walk} together with the strong law for Poisson processes shows that $\lim_{t \to \infty} t^{-1} G(t) = - U_N / M_N$ in this case; we show below the same is true more generally (see~Theorem~\ref{thm:mass-speed}). This fact is true regardless of the stability or otherwise of the particle system, but it does explain why, in Theorem~\ref{thm:stability_condition}, when the system is stable, the limiting speed of the cloud as given by~\eqref{eq:stable-speed} has to be $-U_N/M_N$.

To state our general stability result, Theorem~\ref{thm:cloud_decomposition} below, we need some additional definitions.

\begin{definition}[Concave majorant, boundary, slopes]
\label{def:majorant}
\begin{myenumi}[label=(\roman*)]
    \item\label{def:majorant-i}
Given $M_0, \ldots, M_N$ and $U_0, \ldots, U_M$,
a continuous, concave function $u: [0,M_N] \to \R$ is a 
\emph{concave majorant} if $U_k \leq u(M_k)$ for every $k \in \{0,1,\ldots,N\}$. 
The (unique) \emph{least concave majorant} $\ou : [0,M_N] \to \R$ is a concave majorant  
such that $\ou (x) \leq u(x)$ for all $x \in [0,M_N]$ and every concave majorant~$u$. See Figure~\ref{fig:elastic_stable_m} for a picture.

\item\label{def:majorant-ii}
It is not hard to see that $\ou$ is piecewise linear. Denote
\begin{equation}
\label{set_of_boundary_indices}
V := \bigl\{ k \in \{0,1,\ldots, N\} : U_k = \ou(M_k) \bigr\},
\end{equation}
the 
set of \emph{boundary} indices for  $\ou$; note
that it is always the case that $\{0, N\} \subseteq V$. In other words, $V = \{ k_0, k_1, \ldots, k_\nu \}$ ($\nu \in [N]$)
is the unique set with $0 = k_0 < \cdots < k_\nu = N$, 
\begin{align}
    \label{eq:boundary-1}
    \frac{U_{k_{j-1},k_{j}}}{M_{k_{j-1},k_{j}}} & \geq  \frac{U_{k_j,k_{j+1}}}{M_{k_j,k_{j+1}}} , 
     \text{ for all } j \in [\nu-1], 
    \text{ and } \\
    \label{eq:boundary-2}
          \frac{U_{k_j,m}}{M_{k_j,m}} & < \frac{U_{k_j,k_{j+1}}}{M_{k_j,k_{j+1}}} \text{ for all } 0 \leq j < \nu \text{ and all } k_j < m < k_{j+1} ,
\end{align}
where the convention is $U_0/M_0 := \infty$.

\item\label{def:majorant-iii}
Given boundary indices $V = \{ k_0, k_1, \ldots, k_{\nu} \}$, denote the sequence of \emph{slopes} of successive boundary segments by
\begin{equation}
    \label{eq:slopes}
    v_j :=  \frac{U_{k_{j-1},k_{j}}}{M_{k_{j-1},k_{j}}} , \text{ for } j \in [\nu];
\end{equation}
then~\eqref{eq:boundary-1} says that $v_{j} \geq v_{j+1}$ for all $j \in [\nu-1]$.
\end{myenumi}
\end{definition}

\begin{remark}
    If in~\eqref{eq:boundary-1}, we demand \emph{strict} inequality,  then we obtain the set of \emph{vertices} of the least concave majorant, and the line segments between successive vertices are its \emph{faces} which have strictly ordered slopes. The set $V$ from Definition~\ref{def:majorant} can also include non-vertex boundary points, in cases where the path $(M_k, V_k)$ touches its concave majorant at the interior of a face, corresponding to equality in~\eqref{eq:boundary-1}. 
\end{remark}
 
\begin{figure}
\begin{center}
\includegraphics{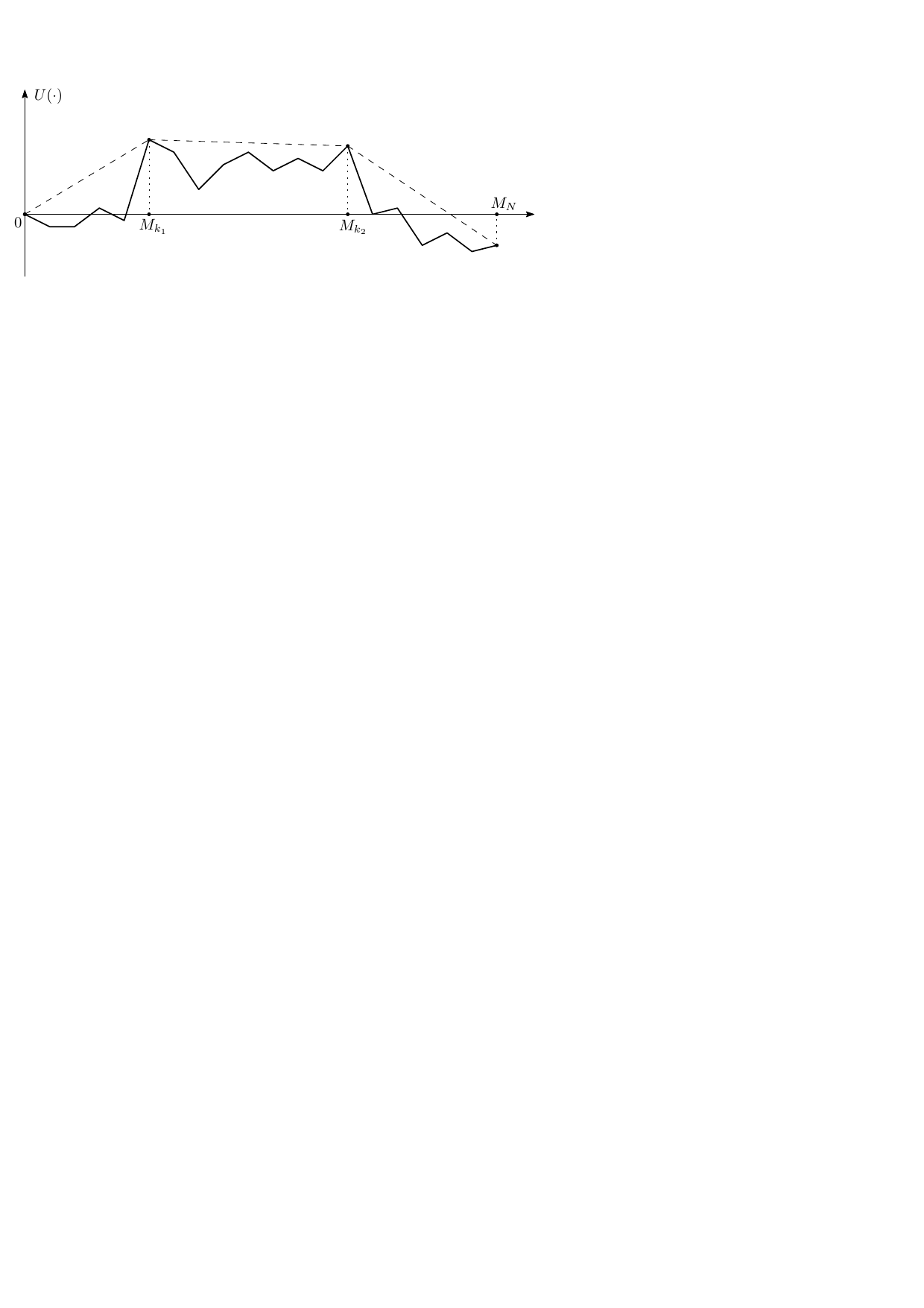}
\caption{Identifying the stable clouds: here, these are
$\{1,\ldots,k_1\}$, $\{k_1+1,\ldots,k_2\}$, and $\{k_2+1,\ldots,N\}$
(with the speeds being equal to minus the corresponding
slopes).}
\label{fig:elastic_stable_m}
\end{center}
\end{figure}

The next result will demonstrate a \emph{cloud decomposition}, which means 
 an ordered partition $\fC_1 < \cdots < \fC_\nu$ of $[N]$, where the \emph{clouds} $\fC_k$ are non-empty,
pairwise disjoint, have union~$[N]$, and 
where $\fC < \fC'$ for $\fC, \fC' \subseteq [N]$ means that  $i < j$ for every $i \in \fC$ and $j \in \fC'$.
Then for every $i \in [N]$ there is a unique $j \in [\nu]$ 
    such that $i \in \fC_j$; we say that particle $i$ belongs to cloud~$j$.
The next result shows that the particles in each cloud are typically close,
in the sense that all inter-particle distances within the same cloud remain exponentially tight,
but different clouds  either diverge ballistically (if their speeds differ) or
are very often well-separated (if they have the same speed, see Remark~\ref{rems:cloud_decomposition}\ref{rems:cloud_decomposition-ii}). For this last part, we will need also the following non-degeneracy condition:
    \begin{equation}
        \label{eq:non-degeneracy}
        \sum_{i \in \fC_j} (a_i +b_i) >0 \text{ for every } j \in [\nu] ,
    \end{equation} 
    which says that every cloud has some intrinsic activity.     For $j \in [\nu-1]$, we write
\begin{equation}
    \label{eq:L-def}
    L_j (t) := \min_{i \in \fC_{j+1}} X_i (t) - \max_{i' \in \fC_j} X_{i'} (t) 
    = X_{\min \fC_{j+1}} (t) - X_{\max \fC_j} (t) 
    \in \ZP, \text{ for } t \in \RP,
\end{equation}
the minimal distance between particles from clouds~$\fC_j$ and $\fC_{j+1}$ at time $t$.

\begin{theorem}[Cloud decomposition]
\label{thm:cloud_decomposition}
Let $N \in \N$, $m_i \in (0,\infty)$ for all $i \in [N]$,
and $a_i, b_i \in \RP$ for all $i \in [N]$. Suppose that~\eqref{eq:non-degeneracy} holds. 
Suppose that  $V = \{ k_0, k_1, \ldots, k_{\nu} \}$
is the set of boundary indices for the least concave majorant, as in Definition~\ref{def:majorant}. Then for the cloud decomposition
$\fC_1, \ldots , \fC_{\nu}$ given by $\fC_j := \{ i \in [N] : k_{j-1} < i \leq k_j \}$, the following hold.
\begin{thmenumi}[label=(\alph*)]
    \item\label{thm:cloud_decomposition-a}
    {\bf {Cloud speeds.}}
For every $j \in [\nu]$ and every $i \in \fC_j$, 
with $v_j$ the slope defined at~\eqref{eq:slopes}, 
    \[ \lim_{t \to \infty} t^{-1} {X_i (t)} = - v_j, \as \]
    \item\label{thm:cloud_decomposition-b}
    {\bf {Clouds are stable.}} 
     Recall that $\Delta_\fC$ is defined at~\eqref{eq:radius-def}.  
     There exist constants $C \in \RP$ and $\delta > 0$ (depending on the $a_i, b_i$, and $m_i$)
    such that  for every $j \in [\nu]$,
 \[
\sup_{t \in \RP} \Pr \left[ \Delta_{\fC_j} (t) \geq s \right] \leq C \Bigl[ 1 + \re^{C \Delta_{\fC_j} (0) } \Bigr] \re^{-\delta s} , \text{ for all } s \in \RP,      \] 
    and, moreover,
    \[ \limsup_{t \to \infty} \frac{\Delta_{\fC_j} (t)}{\log t} < \infty , \as\]
    \item\label{thm:cloud_decomposition-c}
    {\bf {Cloud separations.}} 
    Let $j \in [\nu -1]$. If $v_j > v_{j+1}$, then
    $\lim_{t \to \infty} t^{-1} L_j (t) = |v_{j+1} -v_j| > 0$, a.s. If $v_j = v_{j+1}$,
    then there exists $\eps \in (0,1/2)$ such that
    \begin{equation}
        \label{eq:same-speed-separation}
    \lim_{t \to \infty} \frac{1}{t^{1-\eps}} \int_0^t  \1 { L_j (s) \leq t^\eps } \ud s
     = 0, \as,
    \end{equation}
    while nevertheless
    \begin{equation}
        \label{eq:recurrence}
    \liminf_{t \to \infty} L_j (t) = 0, \as
    \end{equation}
    \end{thmenumi}
\end{theorem}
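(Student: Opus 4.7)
The plan is to prove parts~\ref{thm:cloud_decomposition-a}--\ref{thm:cloud_decomposition-b} by reducing each cloud $\fC_j$ to an instance of Theorem~\ref{thm:stability_condition}, and to handle part~\ref{thm:cloud_decomposition-c} by separating the cases of strict versus equal cloud speeds. The key algebraic input is that~\eqref{eq:boundary-2} asserts $U_{k_{j-1},m}/M_{k_{j-1},m}<v_j$ for every $k_{j-1}<m<k_j$, which is exactly the stability criterion~\eqref{eq:stability_condition} rewritten (after re-indexing) for the sub-system $\fC_j=\{k_{j-1}+1,\ldots,k_j\}$. Hence each cloud, viewed in isolation, would satisfy the hypotheses of Theorem~\ref{thm:stability_condition} with characteristic speed $-v_j$.

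For part~\ref{thm:cloud_decomposition-b} I would adapt the Foster--Lyapunov argument behind Theorem~\ref{thm:stability_condition} to the sub-system $\fC_j$ embedded in the full system. The additional transitions come from stacks straddling the boundary between $\fC_j$ and $\fC_{j\pm1}$; by the elastic rule these enter the drift only through the extremal particles $k_{j-1}+1$ and $k_j$ of $\fC_j$, and the extra rates are sums of external $a_i,b_i$ over the relevant stack, hence bounded by constants independent of the configuration. The exponential Foster--Lyapunov bound thus survives with modified constants, giving the uniform exponential tail and the $O(\log t)$ a.s.\ bound on $\Delta_{\fC_j}(t)$. Crucially, this step requires no information about inter-cloud separation and can be proved first.

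For part~\ref{thm:cloud_decomposition-a} I would compute the drift of the cloud centre of mass $G^{(j)}(t):=M_{k_{j-1},k_j}^{-1}\sum_{i\in\fC_j}m_iX_i(t)$ directly from~\eqref{eq:rates}. Using stack contiguity (forced by $X_1\le\cdots\le X_N$), a bookkeeping calculation analogous to the one that yields the global drift $-U_N/M_N$ in Theorem~\ref{thm:mass-speed} shows this drift equals $-v_j$ plus correction terms supported on the inter-cloud contact events $\{X_{k_{j-1}}=X_{k_{j-1}+1}\}$ and $\{X_{k_j}=X_{k_j+1}\}$. A martingale SLLN for the compensated $G^{(j)}$ then gives $t^{-1}G^{(j)}(t)\to -v_j$ a.s., provided the time-density of these contact indicators vanishes. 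Part~\ref{thm:cloud_decomposition-b} then promotes the centre-of-mass speed to the individual-particle speed $t^{-1}X_i(t)\to -v_j$ for $i\in\fC_j$, since $\max_{i\in\fC_j}|X_i(t)-G^{(j)}(t)|=O(\log t)$ a.s.

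Part~\ref{thm:cloud_decomposition-c}, which also supplies the vanishing-time-density input needed for~\ref{thm:cloud_decomposition-a}, splits by cases. If $v_j>v_{j+1}$, I would use an exponential Lyapunov function of the form $\exp\bigl(\lambda(X_{\max\fC_j}-X_{\min\fC_{j+1}})\bigr)$ with $\lambda>0$, combined with the positive drift difference $v_j-v_{j+1}>0$, to show that $L_j(t)$ is eventually bounded below by a linear function of $t$; this yields ballistic separation, eventual absence of $\fC_j$--$\fC_{j+1}$ contacts, and hence the asserted speeds. The equal-speed case $v_j=v_{j+1}$ is the principal obstacle: the combined sub-system $\fC_j\cup\fC_{j+1}$ sits exactly on the boundary of the stability criterion (equality at $m=k_j$ in the analogue of~\eqref{eq:boundary-2}), so $L_j$ is expected to behave like a null-recurrent process with diffusive fluctuations. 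I would run a second-moment Foster--Lyapunov analysis on $L_j^2$ to obtain recurrence $\liminf_{t\to\infty}L_j(t)=0$ a.s.\ (giving~\eqref{eq:recurrence}), together with a diffusive upper bound $L_j(t)=O(t^{1/2}(\log t)^C)$ a.s.\ from martingale maximal inequalities; a Chebyshev-type estimate over this diffusive window will then deliver~\eqref{eq:same-speed-separation} for any sufficiently small $\eps\in(0,1/4)$.
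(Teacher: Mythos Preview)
Your plan for part~\ref{thm:cloud_decomposition-b} is essentially what the paper does (Proposition~\ref{prop:partial-stability}): a Foster--Lyapunov argument on the centred configuration of~$\fC_j$, where the boundary contributions from external particles turn out to have the \emph{correct sign} (they push the extremal particles of $\fC_j$ inwards), not merely to be bounded. So that part is fine.

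The genuine gap is in your treatment of~\eqref{eq:same-speed-separation}. You propose a diffusive \emph{upper} bound $L_j(t)=O(t^{1/2}(\log t)^C)$ together with a ``Chebyshev-type estimate over this diffusive window'', but~\eqref{eq:same-speed-separation} asserts that $L_j$ is \emph{large} (exceeds $t^\eps$) for all but a vanishing fraction of time --- you need a lower-bound mechanism, and knowing $L_j$ is confined to $[0,C\sqrt{t}\,]$ says nothing about how much time it spends near zero. A null-recurrent process on that window could in principle spend almost all its time below any fixed level. The paper's route is quite different and has two ingredients you are missing. First, it observes (Lemma~\ref{lem:com-submartingale}) that the distance $\Gamma(n)=G_\nu(\tau_n)-G_1(\tau_n)$ between the centres of mass of the two \emph{extreme} clouds is a submartingale with uniformly positive conditional second moment; this works precisely because cloud~$1$ has no particles to its left and cloud~$\nu$ none to its right, so the one-sided inequalities~\eqref{eq:drift-y-left}--\eqref{eq:drift-y-right} apply. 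For an intermediate gap $L_j$ with $1<j<\nu-1$ there is no such clean monotonicity, so your per-gap analysis cannot get started. Second, the submartingale property is converted into an occupation-time bound via a quantitative ``defocusing'' lemma (Lemma~\ref{lem:submartingale-occupation}), and then an induction on the number of clouds (Proposition~\ref{prop:cloud-separation-diffusive}) propagates separation of the extreme clouds down to all adjacent pairs: once the extremes are far apart, pigeonhole forces some adjacent pair to be far apart, whence the system decouples into strictly smaller subsystems over that time window, and one recurses.

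A secondary issue is your treatment of the strict case $v_j>v_{j+1}$. The exponential Lyapunov function $\exp(\lambda(X_{\max\fC_j}-X_{\min\fC_{j+1}}))$ does not obviously work, because the instantaneous drifts of $X_{\max\fC_j}$ and $X_{\min\fC_{j+1}}$ depend on the entire stack structure within each cloud, not just on $L_j$; the quantities $v_j,v_{j+1}$ are long-run averages, not pointwise drifts. The paper instead settles the same-speed case first (the hard part), and then handles strictly ordered speeds by coupling the full system to a collection of independent same-speed subsystems: by Lemma~\ref{lem:same-speeds-speeds} each subsystem obeys its own strong law, the strict speed ordering forces the subsystems to eventually stop interacting, and the coupling becomes exact after a finite random time. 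Your ordering (strict case first, as the ``easy'' one) does not reflect where the real difficulty lies.
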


\begin{example}[Homogeneous speeds]
\label{ex:homogeneous}
    Suppose that $u_i \equiv u \in \R$
    for all $i \in [N]$. Then $U_k/M_k = - u$ for all $k \in [N]$, meaning that $V = \{1,2,\ldots,N\}$ and 
    the cloud decomposition consists of $\nu = N$ singleton ``clouds'' all with the same speed
    $-v_j = u$. The condition~\eqref{eq:non-degeneracy} holds (and hence so does the separation result in~\ref{thm:cloud_decomposition-c} above) unless $a_i = b_i =0$ for all $i \in [N]$, in which case~\eqref{eq:same-speed-separation} will fail, since the particles do not move at all. This, and similar examples, shows that the hypothesis~\eqref{eq:non-degeneracy} cannot be removed in Theorem~\ref{thm:cloud_decomposition}\ref{thm:cloud_decomposition-c}.
\end{example}

\begin{remarks}
\label{rems:cloud_decomposition}
    \begin{myenumi}[label=(\roman*)]
    \item 
     \label{rems:cloud_decomposition-i}
    Recalling Definition~\ref{def:majorant}, we have that $V = \{ 0, N\}$, i.e., $n=1$ and the concave majorant $\ou$ is strictly above $(M_i, U_i)$ for every $i \in [N-1]$, if and only if~\eqref{eq:stability_condition} holds. Hence Theorem~\ref{thm:stability_condition} follows from Theorem~\ref{thm:cloud_decomposition}, although we prove them in the other order.
    \item
    \label{rems:cloud_decomposition-ii}
    Statement~\eqref{eq:same-speed-separation} says that each gap between clouds
    with equal speeds is all but a vanishing proportion of the time growing at least as a small power of~$t$,  while~\eqref{eq:recurrence} says that it is, on the other hand,  recurrent. 
A natural (but difficult) question concerns recurrence or transience of $L_A (t) := (L_j (t))_{j \in A}$ for $A \subseteq [\nu-1]$ with $v_{j}=v_{j+1}$ for every $j \in A$. 
For $N=3$ particles in $\nu=3$ singleton clouds, criteria for recurrence and transience can be deduced
 from results for random walks on~$\ZP^2$~\cite{afm} as described in Example 2.18 of~\cite{mmpw} for the related model with exclusion interaction, but we leave the general case as an open problem.
    \item
        \label{rems:cloud_decomposition-iii}
        Theorem~\ref{thm:cloud_decomposition} identifies stable subsystems of a Markov chain that may not be stable as a whole; interest in 
    such \emph{partial stability} results~\cite{afsw,GM84,mmpw}
    has been stimulated by queueing theory especially, and there is a queueing
    interpretation of the present model, as we explain in~\S\ref{sec:queues} below.
    \item    \label{rems:cloud_decomposition-iv}
    In the construction of Definition~\ref{def:majorant},
    note that if we multiply all masses $m_i$ by the same positive constant, then
    the $U_{\ell,k}$, $M_{\ell,k}$ are multiplied by that same constant, hence their ratios remain the same, and so the boundary indices $\{k_0, \ldots, k_{\nu}\}$
    remain unchanged, as do the $v_j$ at~\eqref{eq:slopes}. Similarly, if we multiply all rates $a_i, b_i$ by the same positive constant, then the $U_{\ell,k}$ are multiplied but not the $M_{\ell,k}$, so the boundary indices $\{k_0, \ldots, k_{\nu}\}$
    remain unchanged once more, but the $v_i$ are all multiplied by the common rate factor. 
   Moreover, in the special case where masses $m_i \equiv m \in (0,\infty)$ are constant, 
   the boundary indices also remain the same if one \emph{adds} the same quantity to all the $a_i$'s (and/or
all the $b_i$'s), corresponding to an affine transformation $U_k \mapsto U_k + \alpha k$ which preserves the concave majorant (cf.~Figure~\ref{fig:elastic_stable_m}).
    \item    \label{rems:cloud_decomposition-v}
    Suppose that parameters $\omega := (a_i,b_i)_{i \in [N]}$ are determined before the dynamics begins by sampling a \emph{random environment}. That is, take $(a_1,b_1)$, \ldots, $(a_N,b_N)$ are independent draws from a law $\mathbf{P}$, and then, given the realization of the environment~$\omega$, define the Markov chain~$X$ through rates~\eqref{eq:rates}, under a law that we now call $\Pr_\omega$ to indicate the dependence on~$\omega$. Theorem~\ref{thm:cloud_decomposition} is then a \emph{quenched} result (for given~$\omega$) but also of interest is behaviour for typical~$\omega$. For simplicity, suppose that $m_i \equiv 1 \in (0,\infty)$ (constant masses). Then the cloud decomposition is determined by the concave majorant of the $N$-step random walk (under $\mathbf{P}$) $U_0, U_1, \ldots, U_N$ with increments $-u_1, \ldots, -u_N$, by~\eqref{eq:U-M-def}. A classical result on concave majorants of random walks (see~\cite{steele,arb} and references therein) says that, assuming $u_i$ has a density under $\mathbf{P}$, the expected number of slopes of the concave majorant is asymptotically equivalent to $\log N$, and hence the (random) number of clouds $\nu$ in the decomposition of Theorem~\ref{thm:cloud_decomposition} has $\mathbf{E} \nu \sim \log N$ as $N \to \infty$ as well.
\end{myenumi}
\end{remarks}

We indicate two quite broad directions for potential future work, in addition to the recurrence/transience question raised in Remark~\ref{rems:cloud_decomposition}\ref{rems:cloud_decomposition-ii}.
\begin{itemize}
    \item We have dealt here with finite systems of particles with elastic collisions, but it is a natural open problem to also consider semi-infinite systems, with particles enumerated by the natural numbers.  Here one expects some progress can be made by comparison with large finite systems, but also  new and rich phenomena; see e.g.~\cite{mpw-ptrf}, and references therein, for the setting of exclusion interaction (as described in~\S\ref{sec:exclusion} in the finite case).
    \item In contrast to the case of exclusion interaction, where
    explicit product-geometric invariant   distributions are known in the stable case (see~\cite{mmpw}), the results above in the elastic case give no explicit form for invariant measures. There are reasons to expect that, in the elastic case,   simple explicit formulas are not available generically, but only in some special cases of the parameters, that as yet remain to be classified. We make some observations in this direction in \S\ref{sec:invariant_distributions}, but leave fuller study of invariant measures as an open problem.
\end{itemize}

The outline of the rest of the paper is as follows. In \S\ref{sec:equivalent} we
describe equivalent models to the model described above (including in~\S\ref{sec:expanded} the original formulation of the particle model from~\cite{mmpw} in which no site can be occupied by more than one particle). In~\S\ref{sec:exclusion} we contrast the elastic model with the simple exclusion process, which has a non-elastic collision mechanism and is one of the most studied models of interacting particle systems,
while in~\S\ref{sec:atlas} we draw parallels with a well-studied continuum model of mutually-reflecting diffusions. In~\S\ref{sec:queues} we formulate a queueing model equivalent to the elastic particle model. In the exclusion-process context, a similar translation gives a Jackson network, for which partial stability results due to Goodman and Massey~\cite{GM84} were a key component to developing the corresponding cloud decomposition~\cite{mmpw}; as far as we know,
no such results are available for the class of queueing models we end up with here, and so we need a different approach. 
The proofs of our main results are divided between proof of, loosely speaking, stability (Theorem~\ref{thm:stability_condition}) in~\S\ref{sec:proofs-stability} and instability and hence the cloud decomposition (Theorem~\ref{thm:cloud_decomposition}) in~\S\ref{sec:proofs-instability}. The arguments for stability, including the proof of Theorem~\ref{thm:stability_condition} given in~\S\ref{sec:speed}, use some Lyapunov function ideas presented in~\S\ref{sec:lyapunov}. The most involved part of the proof of instability concerns
the case where there are several different clouds of the same speed, and here we make use of some apparently novel tools from martingale defocusing that we present in a self-contained appendix (\S\ref{sec:submartingale}). 
The proof of Theorem~\ref{thm:cloud_decomposition} is then given in~\S\ref{sec:multiple_same_speed}. 
In \S\ref{sec:invariant_distributions} we present some partial results on invariant distributions and pose some open questions in that context.

\section{Discussion of equivalent and adjacent models}
\label{sec:equivalent}

\subsection{Random walks with rank-dependent rates}
\label{sec:rank-dependent}

Here is an alternative construction of the model formulated in~\S\ref{sec:definitions}. 
Consider a system of $N$ labelled particles with identical masses $m_i \equiv m \in (0,\infty)$ for all $i \in [N]$. Let 
$Y(t) := (Y_1 (t), \ldots, Y_N(t)) \in \Z^N$, where 
$Y_i (t) \in \Z$ denotes the position of the particle labelled~$i$ at time $t \in \RP$,
started from $Y(0) \in \Z^N$ (not necessarily in label order). Let $\sigma_t (i)$ denote the
permutation on $[N]$ giving the time-$t$ \emph{rank} of the particle labelled $i$ among all the particles, using lexicographic order in case of ties (rank $1$ being leftmost, rank $N$ rightmost):
\[ \sigma_t (i) := \sum_{ j \in [N]} \1 { Y_j (t) < Y_i (t) } + \sum_{j \in [i]} \1 { Y_j (t) = Y_i (t)} . \]
Then define dynamics by declaring that at time~$t$,  particle~$i$ jumps $-1$ at rate $A^{\sigma_t}_i (Y(t))$ and jumps $+1$ at rate $B^{\sigma_t}_i (Y(t))$, independently of other particles, given the ranks,
where $A^\sigma$ and $B^\sigma$, for a permutation $\sigma$ with inverse $\sigma^{-1}$, are defined similarly to~\eqref{eq:rates}: for $y \in \Z^\N$, 
\begin{align*}
A^\sigma_i (y) := \2 {\{ y_{\sigma^{-1} ( \sigma(i)-1 )} < y_{i} \}} \sum_{j = \sigma(i)}^N a_{j}  \2 {\{ y_{\sigma^{-1}(j)}  = y_{i} \}}, ~~~
    B^\sigma_i (y)  :=  \2 {\{ y_{\sigma^{-1} (\sigma(i)+1)}  > y_i  \}} \sum_{j = 1}^{\sigma(i)} b_j  \2 {\{ y_{\sigma^{-1}}(j)  = y_i  \}} ,
\end{align*}
with conventions $\sigma^{-1} (0) =0$, $\sigma^{-1} (N+1) = N+1$, $y_0 = -\infty$ and $y_{N+1} = +\infty$. 
Considering the \emph{ordered} particles $X_i (t) := Y_{\sigma^{-1}_t (i)} (t)$, we recover the identical-mass elastic particle system $X(t)$ defined in \S\ref{sec:definitions}. In words, the elastic collision mechanism can be obtained by starting with a system of  particles that perform independent random walks at rate parameters
determined by their ranks, and tracking the ordered configuration of particles. This is a discrete analogue of a construction of Atlas-type diffusion models (see e.g.~\cite{pp,ipbkf,kps,bfk} and~\S\ref{sec:atlas} below).

\subsection{Discrete time}
\label{sec:discrete-time}

Associated to the continuous-time Markov process $X$ defined in \S\ref{sec:definitions} is the discrete-time jump chain,
obtained by sampling the continuous-time process at the times at which it changes configuration.
Since the jump rates of the continuous-time process are uniformly bounded above,
and uniformly bounded below if we exclude the trivial case where~$a_i = b_i \equiv 0$ for all $i \in [N]$, 
statements about stability, partial stability, exponential bounds, etc., 
from Theorems~\ref{thm:stability_condition} and~\ref{thm:cloud_decomposition} apply
equally well to the discrete-time version of the process.

The relationship between the \emph{speeds} is a little more involved.
In the particular
 case when all masses are equal, i.e., $m_i \equiv m \in (0,\infty)$ for all $i \in [N]$,
 a consequence of~\eqref{eq:rates} is that the total activity rate of the continuous-time process is constant, regardless of the configuration:
 \begin{equation}
 \label{eq:time-change}
 \lim_{h \to 0} \frac{\Pr [ X(t+h) \neq X(t) \mid X(t) = x ]}{h} = \sum_{i \in [N]} (a_i + b_i) , \text{ for every } x \in \bbX .\end{equation}
 Consequently, ergodic properties of the discrete-time and continuous-time chains
 are equivalent up to a constant multiplicative factor in terms of the right-hand side of~\eqref{eq:time-change}: this means
 that speeds are all related by the same multiplicative factor, and 
 stationary distributions coincide exactly.
In the case of general masses $m_i$, the time-change between discrete and continuous time 
is less explicit, as total activity rate depends on the ergodic behaviour internally to each stable cloud,
which is not explicitly quantified since stationary distributions are not known explicitly (see~\S\ref{sec:invariant_distributions} below).

\subsection{Expanded elastic system excluding multiple occupancy}
\label{sec:expanded}

Take the elastic process described in \S\ref{sec:definitions} in the case when all masses are equal, i.e., $m_i \equiv m \in (0,\infty)$ for all $i \in [N]$. Define $\tX_i (t) := X_i (t) + i -1$ for every $i \in [N]$ and all $t \in \RP$. The Markov process $\tX (t) := ( \tX_i (t) )_{i \in [N]}$ 
can be described as a particle system on $\Z$, with no more than one particle per site,
in which the particle at position $\tX_i$ attempts to jump left at rate $a_i$ and right at rate $b_i$.
If no particle is occupying the site of an attempted jump, the jump is executed. If a particle
is occupying the target site of an attempted jump, instead \emph{that} blocking particle immediately attempts to jump to \emph{its} neighbouring site in the same direction; and so on. In this way `momentum' is transferred to the outermost particles of contiguous blocks, as represented in Figure~\ref{fig:elastic-single-occupancy}. For the case of non-identical masses $m_i$, a similar interpretation is possible.

We call $\tX(t)$ the \emph{expanded} elastic particle system, and, when contrast is needed, we refer to the model of \S\ref{sec:definitions} as the \emph{contracted} elastic particle system. 
Up to the bijection between the two sets of configurations by the transformation
$(x_1, \ldots, x_N) \mapsto (x_1, x_2 +1, \ldots, x_N + N-1)$, the two models are equivalent. Thus Theorem~\ref{thm:stability_condition} and Theorem~\ref{thm:cloud_decomposition} apply, \emph{mutatis mutandis}, to the expanded elastic model as well. The expanded elastic model was first proposed, as far as the authors are aware, in~\S6.2 of~\cite{mmpw}, where the stability criterion in Theorem~\ref{thm:stability_condition} was conjectured.

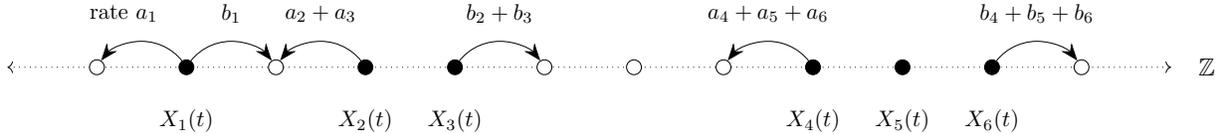
\begin{figure}[h!]
\centering
\scalebox{0.85}{
 \begin{tikzpicture}[domain=0:1, scale=1.4]
\draw[dotted,<->] (0,0) -- (13,0);
\node at (13.4,0)       {$\Z$};
\draw[black,fill=white] (1,0) circle (.5ex);
\draw[black,fill=black] (2,0) circle (.5ex);
\node at (2,-0.6)       {\small $X_1 (t)$};
\draw[black,fill=white] (3,0) circle (.5ex);
\draw[black,fill=black] (4,0) circle (.5ex);
\node at (4,-0.6)       {\small $X_2 (t)$};
\draw[black,fill=black] (5,0) circle (.5ex);
\node at (5,-0.6)       {\small $X_3 (t)$};
\draw[black,fill=white] (6,0) circle (.5ex);
\draw[black,fill=white] (7,0) circle (.5ex);
\draw[black,fill=white] (8,0) circle (.5ex);
\draw[black,fill=black] (9,0) circle (.5ex);
\node at (9,-0.6)       {\small $X_4 (t)$};
\draw[black,fill=black] (10,0) circle (.5ex);
\node at (10,-0.6)       {\small $X_5 (t)$};
\draw[black,fill=black] (11,0) circle (.5ex);
\node at (11,-0.6)       {\small $X_6 (t)$};
\draw[black,fill=white] (12,0) circle (.5ex);
\node at (1.3,0.6)       {\small rate $a_1$};
\node at (2.5,0.6)       {\small $b_1$};
\draw[black,-{Stealth[scale=1.6]}] (2,0) arc (30:141:0.58);
\draw[black,-{Stealth[scale=1.6]}] (4,0) arc (30:141:0.58);
\node at (3.5,0.6)       {\small $a_2+a_3$};
\node at (5.5,0.6)       {\small $b_2+b_3$};
\draw[black,-{Stealth[scale=1.6]}] (9,0) arc (30:141:0.58);
\node at (8.5,0.6)       {\small $a_4+a_5+a_6$};
\node at (11.5,0.6)       {\small $b_4+b_5+b_6$};
\draw[black,-{Stealth[scale=1.6]}] (2,0) arc (150:39:0.58);
\draw[black,-{Stealth[scale=1.6]}] (5,0) arc (150:39:0.58);
\draw[black,-{Stealth[scale=1.6]}] (11,0) arc (150:39:0.58);
\end{tikzpicture}}
\caption{\emph{Expanded elastic dynamics.} Take the configuration of $N=6$ equal-mass particles represented in Figure~\ref{fig:elastic}, and apply the transformation $\tX_i (t) := X_i (t) + i -1$ for every $i \in [N]$ to obtain a configuration in which the `stacks' of multiple occupancy sites are expanded into contiguous blocks of particles. This gives an equivalent version of the elastic particle system, in which no site can be occupied by more than one particle, and in which 
`momentum' of attempted jumps is transferred to the outermost particles of contiguous blocks.}
\label{fig:elastic-single-occupancy}
\end{figure}

\subsection{Simple exclusion process with particle-wise heterogeneity}
\label{sec:exclusion}

The exclusion process, originating with~\cite{spitzer70,mgp},
is one of the most intensively studied interacting particle models,
but it differs crucially from the elastic model of~\S\ref{sec:definitions} because collisions suppress activity of the system, as we will explain. The version of the exclusion process that serves as a
comparator for our model (in the case of identical masses $m_i \equiv m$) consists of $N$ particles,
with configurations in $\bbX_N$, and particle $i$ has jump rates $a_i$, $b_i$ to the left, right, when it is on its own, exactly like the model of \S\ref{sec:definitions}. The difference is that when multiple particles occupy the same site, the minimal index particle jumps left at only its intrinsic rate, rather than the sum of all rates of the particle stack. Similarly for jumps to the right;
thus the exclusion collision rule replaces the rates~$A_i, B_i$ from~\eqref{eq:rates} by simply
$ A^\textrm{exc}_i (x) := \2 {\{ x_{i-1} < x_i \}} a_i$ and $B^\textrm{exc}_i (x) :=  \2 {\{ x_{i+1}  > x_i  \}} b_i$.
See Figure~\ref{fig:exclusion} for an illustration.

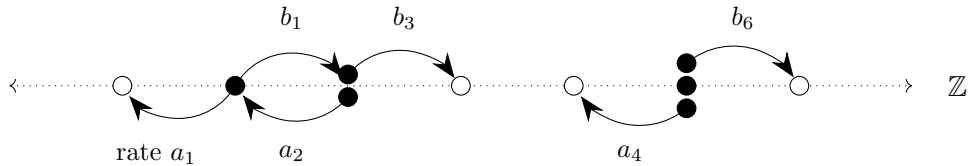
\begin{figure}[h!]
\centering
\scalebox{1.0}{
 \begin{tikzpicture}[domain=0:1, scale=1.5]
\draw[dotted,<->] (0,0) -- (8,0);
\node at (8.4,0)       {$\Z$};
\draw[black,fill=white] (1,0) circle (.5ex);
\draw[black,fill=black] (2,0) circle (.5ex);
\draw[black,fill=black] (3,-0.1) circle (.5ex);
\draw[black,fill=black] (3,0.1) circle (.5ex);
\draw[black,fill=white] (4,0) circle (.5ex);
\draw[black,fill=white] (5,0) circle (.5ex);
\draw[black,fill=black] (6,0) circle (.5ex);
\draw[black,fill=black] (6,0.2) circle (.5ex);
\draw[black,fill=black] (6,-0.2) circle (.5ex);
\draw[black,fill=white] (7,0) circle (.5ex);
\node at (1.3,-0.6)       {\small rate $a_1$};
\node at (2.5,0.6)       {\small $b_1$};
\draw[black,-{Stealth[scale=2.0]}] (2,0) arc (330:221:0.58);
\draw[black,-{Stealth[scale=2.0]}] (3,-0.1) arc (320:218:0.6);
\node at (2.5,-0.6)       {\small $a_2$};
\node at (3.5,0.6)       {\small $b_3$};
\draw[black,-{Stealth[scale=2.0]}] (6,-0.2) arc (310:217:0.65);
\node at (5.5,-0.6)       {\small $a_4$};
\node at (6.5,0.6)       {\small $b_6$};
\draw[black,-{Stealth[scale=2.0]}] (2,0) arc (150:39:0.58);
\draw[black,-{Stealth[scale=2.0]}] (3,0.1) arc (140:36:0.6);
\draw[black,-{Stealth[scale=2.0]}] (6,0.2) arc (130:35:0.65);
\end{tikzpicture}}
\caption{\emph{Contracted exclusion dynamics.} Schematic showing a configuration of $N=6$ equal-mass particles and transition rates indicated on the arrows. In the case of multiple particles occupying the same site, we imagine that they are stacked in increasing order of index, and it is the bottom and top particles that are allowed to move: the transition to the left would move the particle from the base of the stack, while the transition to the right would move the particle from the top of the stack. To translate a configuration to the more standard exclusion configuration, shift particle $i$ to the right by $i-1$ sites; this change of variables shows equivalence of this `contracted' exclusion process to the classical exclusion process in which sites are occupied by at most one particle.}
\label{fig:exclusion}
\end{figure}

 It should be noted that the exclusion process is usually described
in terms of configurations expanded using the transformation $(x_1, \ldots, x_N) \mapsto (x_1, x_2 +1, \ldots, x_N + N-1)$, but,
similarly to~\S\ref{sec:expanded}, there is an equivalence exactly as between the contracted and expanded elastic models (cf.~Figures~\ref{fig:elastic} and~\ref{fig:elastic-single-occupancy}).

\begin{example}[Small systems, continued]
\label{ex:not-exclusion}
To emphasize that there is no simple monotonicity relation between the exclusion and elastic interactions, we give examples (with $m_i \equiv m \in (0,\infty)$ constant) using the same rate parameters where (i) the model with exclusion interaction is stable but with  elastic interaction is not, and (ii) vice versa. 

For case (i), consider~$N=3$ particles with intrinsic rates $a_1 = a_2 = a_3 = b_1 =1$, $b_2 = 2$, 
and $b_3 = 1/3$.
Example~2.10 of~\cite{mmpw} shows that under exclusion dynamics, this system is stable, i.e., $\{1,2,3\}$ is a single cloud. On the other hand  Theorem~\ref{thm:cloud_decomposition} above shows that, under elastic dynamics, the stable clouds are $\{ 1\}$ and $\{2,3\}$ with corresponding speeds $-v_1 = 0$ and $-v_2 =1/6$; in the elastic case, particle 2 can push particle 3 faster to the right.

For case (ii), consider~$N=3$ particles with intrinsic rates $a_1 = b_1 = b_3 =1$, $a_2 = 1/2$, $b_2=3/2$, and $a_3=3$. 
Now Example~2.10 of~\cite{mmpw} shows that under exclusion dynamics, the stable clouds are $\{ 1\}$ and $\{2,3\}$, while Example~\ref{ex:small-systems} (or Theorem~\ref{thm:stability_condition}) shows that, under elastic dynamics, the whole system is stable. 
\end{example}

There is a bijection between the $N$-particle exclusion process and an \emph{open Jackson network} of~$N-1$ queues (see~\S3 of~\cite{mmpw} and references therein). In the work of Malyshev and the present authors~\cite{mmpw}, this bijection was used to apply results of Goodman and Massey~\cite{GM84} on partial stability for Jackson queueing networks to obtain the cloud decomposition of the exclusion system. The partial stability results (Theorems~2.1 and~2.3 of \cite{mmpw}) are analogous to
the present Theorems~\ref{thm:stability_condition} and~\ref{thm:cloud_decomposition}, but 
the algebra of cloud speeds in the exclusion/Jackson case is rather more complicated, 
since the elastic property is absent. On the other hand, the exclusion/Jackson setting
turns out to be simpler from the point of view of exhibiting \emph{reversibility}
that allows invariant distributions to be computed explicitly as product-geometric distributions. We discuss invariant distributions and the generic absence of reversibility in the elastic model in \S\ref{sec:invariant_distributions} below. Rather than describe in detail the queueing model that corresponds to the exclusion systems (see~\cite[\S3]{mmpw} for that), in~\S\ref{sec:queues} we instead describe a similar correspondence for the elastic model. We refer to~\cite{mpw-ptrf}, and references therein, for exclusion interaction among semi-infinite systems of particles.

\subsection{Queueing networks with resource redeployment, random walks in orthants}
\label{sec:queues}

Once more, take the elastic process described in \S\ref{sec:definitions} in the case when all masses are equal, i.e., $m_i \equiv m \in (0,\infty)$ for all $i \in [N]$.
The Markov process $\eta = (\eta(t))_{t \in \RP}$ of inter-particle distances defined by~\eqref{eq:eta-def} has an interpretation as the process of queue-lengths in a network of $N-1$ queues in series, with a specific form of pooling of server resource. 

In this interpretation, $\eta_i(t) \in \ZP$ counts the number of customers in queue~$i \in [N-1]$ at time $t \in \RP$.
Customers enter and leave the network only at the two extremal queues, $1$ and $N-1$. Arrivals are according to a Poisson arrival stream with rate $a_1$ at queue~$1$, and an arrival stream of rate $b_{N}$ at queue~$N-1$. If queue $i \in [N-1]$ is non-empty, then the $i$th server serves the customer at the head of queue~$i$ at rate $b_i + a_{i+1}$. When a customer is served, it is routed to queue $i-1$ with probability $\frac{b_{i}}{b_i+a_{i+1}}$ or to queue $i+1$ with probability $\frac{a_{i+1}}{b_i+a_{i+1}}$; being routed to queue~$0$ or~$N$ means that the customer leaves the system. 

The above part of the dynamics is identical to the Jackson network that corresponds to the exclusion process: the difference for the elastic model will be that when a server's queue is empty, the server will \emph{redeploy} resource to serve the nearest occupied queues in each direction. Precisely, if queue $i \in [N-1]$ is empty, then server~$i$ lends service rate~$b_i$ to the nearest occupied queue to the right, if there is one, or to augment
the arrival stream at queue~$N-1$ if there are no occupied queues to the right, and lends
service rate~$a_{i+1}$ to the nearest occupied queue to the left, or to the arrival stream at queue~$1$ if there are no occupied queues to the left. The interpretation of the constant activity rate in~\eqref{eq:time-change} in this setting is that the total service effort in the queueing network is independent of the current configuration, unlike in the Jackson network. (Here `service' includes the server recruiting extra arrivals to the extremal queues.)  

The process $\eta$ can also be viewed as a continuous-time, reflected random walk on the orthant~$\ZP^{N-1}$, with nearest-neighbour jumps. 
For $N \in \{2,3\}$, there are exhaustive criteria for classifying stability of such
 walks (see e.g.~\cite{fim,fmm}\footnote{These works are written for discrete-time random walks, but they immediately apply to this case also, since the rates normalize to probabilities with the same normalizing constant from~\eqref{eq:time-change}.}) in terms of first and second moments
of increments. For $N \in \{4,5\}$ the generic classification  requires precise
 knowledge of quantities which are hard to compute, namely stationary distributions for
 lower-dimensional projections~\cite{fmm,ignatyuk}. For $N \geq 6$,  the generic case is intractable~\cite{gamarnik}. The family of random walks here, with the 
 explicit stability criterion in Corollary~\ref{cor:stability}, gives an example where there is   a complete
 stability description for any $N$; 
 the model of~\cite{mmpw} provides another class of examples.

\subsection{Atlas model of rank-dependent interacting diffusions}
\label{sec:atlas}

Continuum (diffusion) models, in which $N$ particles perform mutually-reflecting 
Brownian motions with drifts and diffusion coefficients determined by their ranks, 
have been extensively studied in recent years, and include the \emph{Atlas model} and its relatives:
see e.g.~\cite{bb,bfk,ipbkf,pp,kps,sarantsev,shiga} and references therein. 

In one of the most general settings for finite systems, the system of $N$~diffusing particles 
is described by a system of stochastic differential equations (SDEs) with reflections.
If $x_1 (t) \leq x_2 (t) \leq \cdots \leq x_N (t)$ denote the ordered positions on $\R$ of the $N$ particles at time~$t \in \RP$, 
and  parameters $q_j^-, q_j^+ \in (0,1)$ satisfy $q^+_{j+1} +q_j^- =1$ for all $j \in [N-1]$,
consider the dynamics satisfying
\begin{equation}
\label{eq:SDE-ordered-asymmetric}
\ud x_i (t) = \mu_{i} \ud t + \sigma_{i} \ud W_i (t) + q_i^+ \ud L^{i} (t) - q_i^- \ud L^{{i+1}} (t) 
, \text{ for every } i \in [N],
\end{equation}
where  $W_1, \ldots, W_{N}$ are independent standard Brownian motions,  
and $L^{i}(t)$ is the local time at~$0$ of $x_i-x_{i-1}$ up to time~$t$, with the convention $L^1 \equiv L^{N+1} \equiv 0$.
 The local-time terms in~\eqref{eq:SDE-ordered-asymmetric}
maintain the order of the~$x_i$, while each particle~$x_i$ has its own intrinsic drift ($\mu_i$) and diffusion ($\sigma_i$) coefficients. The coefficients $q_j^-, q_j^+$ multiplying the local time terms
control the relative impact of collisions on each of the two particles involved, and play 
a comparable  role to the \emph{masses} in the model of~\S\ref{sec:definitions}, as we now explain; see also the discussion in~\S3 of~\cite{kps}.

We can identify `masses' in the system~\eqref{eq:SDE-ordered-asymmetric} by defining, for $k \in [N]$,
\[ m_k := \prod_{i \in [k]} \left( \frac{1-q_i^+}{q_i^+} \right) . \]
Then some algebra shows that, for $k \in [N]$.
\[ \frac{m_{k-1}}{m_k+m_{k-1}} = q_k^+, \text{ and } \frac{m_{k+1}}{m_k + m_{k+1}} = 1 -q_{k+1}^+ = q_k^- ,\]
where we set $m_0 = m_{N+1} \equiv 0$. 
In other words, we can re-write~\eqref{eq:SDE-ordered-asymmetric} as
\begin{equation}
\label{eq:SDE-ordered-asymmetric-masses}
\ud x_i (t) = \mu_{i} \ud t + \sigma_{i} \ud W_i (t) + \frac{m_{i-1}}{m_i + m_{i-1}}  \ud L^{i} (t) - \frac{m_{i+1}}{m_i+m_{i+1}} \ud L^{{i+1}} (t) , \text{ for every } i \in [N]. \end{equation}
The continuum system~\eqref{eq:SDE-ordered-asymmetric} is \emph{elastic} in the sense that the total flow of mass is constant. Formally, define $M: = \sum_{k \in [N]} m_k$ and 
the centre of mass $g(t) := M^{-1} \sum_{i \in [N]} m_i x_i (t)$. By summing~\eqref{eq:SDE-ordered-asymmetric-masses} and
using the conditions on the   $q_j^-, q_j^+$, we deduce that $g$ has dynamics
\[
\ud g (t) = \overline{\mu} \ud t + \overline{\sigma} \ud \overline{W} (t) , \]
where $\overline{W}$ is standard Brownian motion, and the drift and diffusion coefficients
satisfy
\[ \overline{\mu} = \frac{1}{M} \sum_{i \in [N]} m_i \mu_i, \text{ and } \overline{\sigma}^2 = \frac{N}{M^2} \sum_{i \in [N]} m_i^2 \sigma_i^2 .\]
(Compare Theorem~\ref{thm:mass-speed} below for the elastic particle system.)

The elastic continuum model just described serves as a scaling limit of 
a class of particle system models, under certain near-critical parameter scaling, that includes both
the elastic model of~\S\ref{sec:definitions} and the `inelastic' simple exclusion process of~\S\ref{sec:exclusion}: see~\cite[\S 3]{kps} and~\cite[\S 6.3]{mmpw}
for more detail. The equivalent scaling limit when the discrete-space model is viewed as a queueing network (as in~\S\ref{sec:queues}) is a \emph{heavy traffic} limit in which every queue is asymptotically critically loaded, and the scaling limit is a reflected diffusion on an orthant; see e.g.~\cite{reiman,gz}.

As far as the authors are aware, long-time stability of the continuum system defined by~\eqref{eq:SDE-ordered-asymmetric} above has been studied only as far as classifying whether or not the whole system is stable (e.g.~\cite{pp,sarantsev,ipbkf,kps}), rather than identifying a full cloud decomposition as we do in Theorem~\ref{thm:cloud_decomposition}; in the continuum setting there are additional complexities as
one must consider the analogue of the `corner trapping' phenomenon for reflected diffusion in an orthant~\cite{varadhan-williams}.

\subsection{Further literature on elastic collisions}
\label{sec:literature}

Physical systems of particles
exhibit (approximately) elastic 
collisions, in which kinetic energy and 
momentum are both (approximately) conserved, if there are no external forces and internal energy states of particles can be (approximately) ignored. A number of probabilistic models for such physical systems have been studied in the literature, 
including  Kac's uniformly mixing model for kinetic theory of gases~\cite{kac} (which neglects space),
and the work of Jepsen, Harris and Spitzer~\cite{jepsen,harris65,spitzer69} on  elastic models of colliding particles on the line with deterministic or Brownian free-space trajectories. To give a flavour of the sorts of phenomena observed for the spatial models, suppose that 
at time $0$ a system of particles with equal masses  is given  according to a stationary Poisson process on $\R$, with one much more massive tagged particle at the origin;   independently of the positions, the atoms are given i.i.d.\  velocities, and undergo elastic collisions when they meet. If one observes the  tagged particle in an infinite system started from a given density profile, possibly in the presence of a hard barrier, there are results that establish convergence to a Brownian, Ornstein--Uhlenbeck, or other Gaussian process, and also results when the tagged particle is much less  massive than the rest: see e.g.~\cite{gissel,bold86,braglia,ciesielski,dgl,lsy}. An informative overview of this and other literature is given in the introduction to~\cite{pp}.

\section{Proofs: Stability}
\label{sec:proofs-stability}

\subsection{Global speed and local stability}
\label{sec:speed}

In this section we state two key  ingredients to our proofs.
Theorem~\ref{thm:mass-speed} 
states that the drift of the centre of mass of the system is independent
of the current configuration, which  is a distinguishing feature of the elastic collision mechanism. 
Proposition~\ref{prop:partial-stability} gives a criterion for identifying (not necessarily maximal) stable subsystems of particles. These two results will combine to give Theorem~\ref{thm:stability_condition} (the short deduction appears after their statement),
and are also 
 crucial in the
proof of Theorem~\ref{thm:cloud_decomposition}.
The proofs of both
Theorem~\ref{thm:mass-speed} and Proposition~\ref{prop:partial-stability}
are given in \S\ref{sec:lyapunov} below.

Recall the
definition of the centre of mass process $G$ from~\eqref{eq:centre-of-mass}.
The next result shows that $G$ travels with speed $-U_N / M_N$.

\begin{theorem}[Centre of mass]
\label{thm:mass-speed}
For the process $G$ as defined at~\eqref{eq:centre-of-mass}, it holds that
\begin{equation}
    \label{eq:com-increment}
  \Exp [ G(t+h) - G(t) \mid X(t) = x ] = -h U_N/M_N, \text{ for all $t, h \in \RP$ and all } x \in \bbX_N. \end{equation}
Moreover, $\lim_{t \to \infty} t^{-1} G(t) = -U_N/M_N$, a.s.
\end{theorem}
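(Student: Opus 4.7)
The plan is to reduce both assertions to a single generator computation. Let $\mathcal{L}$ denote the infinitesimal generator of the Markov chain $X$, and let $f: \bbX_N \to \R$ be the mass-weighted linear function $f(x) := M_N^{-1} \sum_{i \in [N]} m_i x_i$, so that $f(X(t)) = G(t)$. From~\eqref{eq:rates},
\[
\mathcal{L}f(x) = \frac{1}{M_N}\sum_{i \in [N]} m_i \bigl( B_i (x) - A_i(x) \bigr) .
\]
The central computation---the algebraic expression of the elastic collision rule---is the identity
\[
\sum_{i \in [N]} m_i A_i (x) = \sum_{j \in [N]} m_j a_j ,
\]
independent of $x \in \bbX_N$, and its mirror for $B_i$. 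To verify it, note that $m_i A_i(x) = \2{x_{i-1} < x_i} \sum_{j \geq i} m_j a_j \2{x_j = x_i}$; swapping the order of summation, the inner sum over $i \leq j$ counts those $i$ with $x_i = x_j$ and $x_{i-1} < x_i$, i.e., those $i$ at the base of their stack. There is exactly one such $i$, namely $\min\{i' : x_{i'} = x_j\}$, so each $j$ contributes $m_j a_j$. Thus $\mathcal{L}f(x) \equiv -U_N/M_N$ on~$\bbX_N$.

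Since the rates $A_i, B_i$ are uniformly bounded (each is at most $m_i^{-1}\sum_j m_j (a_j + b_j)$), a standard truncation of $f$ together with Dynkin's formula justifies that
\[
M(t) := G(t) - G(0) + \frac{U_N}{M_N}\, t
\]
is a martingale. Applying the time-homogeneous Markov property at time $t$ then gives $\Exp[G(t+h) - G(t) \mid X(t) = x] = \Exp_x[G(h) - G(0)] = -h U_N / M_N$, which is~\eqref{eq:com-increment}.

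For the strong law, I would bound $\Exp[M(t)^2]$ via the carré du champ
\[
\Gamma(f)(x) := \mathcal{L}(f^2)(x) - 2 f(x) \mathcal{L}f(x) = \frac{1}{M_N^2} \sum_i m_i^2 \bigl(A_i(x) + B_i(x)\bigr).
\]
The identity above gives $m_i A_i(x) \leq \sum_j m_j a_j$, and similarly for $B_i$, so $m_i^2 (A_i + B_i) \leq m_i \sum_j m_j (a_j + b_j)$; summing over $i$ yields $\Gamma(f) \leq M_N^{-1} \sum_j m_j(a_j + b_j) =: C$, uniformly in~$x$. Hence $\Exp[M(t)^2] \leq Ct$. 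A routine Chebyshev plus Borel--Cantelli argument along the subsequence $t_n = n^2$, combined with Doob's $L^2$ maximal inequality to control the fluctuations on each slab $[n^2, (n+1)^2]$, shows $M(t)/t \to 0$ almost surely, and therefore $G(t)/t \to -U_N/M_N$ a.s.

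The only genuine difficulty is the telescoping identity for $\sum_i m_i A_i(x)$: this is precisely where the elastic collision rule earns its name, and it has no analogue for the exclusion process, for which the total weighted rate genuinely depends on the current configuration. Once that identity is in hand, everything else is standard martingale analysis.
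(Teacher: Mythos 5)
Your proof is correct, and it takes a route that is recognizably different from the paper's in its details, although the central algebraic fact is the same. Both arguments hinge on the telescoping identity $\sum_i m_i A_i(x) = \sum_j m_j a_j$ (and its $B$-analogue): in the paper this is packaged inside Lemma~\ref{lem:drift_y} with $y = \ones$, whereas you extract it directly, and your interchange-of-summation argument is exactly what makes Lemma~\ref{lem:drift_y} work. Where you genuinely diverge is in the strong law. The paper passes to the discrete-time jump chain $g(X(\tau_k))$, whose compensator is the random quantity $-(U_N/M_N)\sum_{i<k} 1/R_{X(\tau_i)}$; it then needs an Azuma--Hoeffding argument both to control $\tau_k$ against this accumulated clock and to control the compensated process, and finally interpolates back to continuous time. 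You instead work directly in continuous time, where $M(t) = G(t) - G(0) + (U_N/M_N)t$ is a martingale with a \emph{deterministic} linear compensator, bound the predictable variation via the carr\'e du champ, and then run the standard Chebyshev--Borel--Cantelli argument along $t_n = n^2$ with Doob's $L^2$ maximal inequality on the slabs. This sidesteps the time-change bookkeeping entirely and buys a cleaner quantitative bound ($\Exp[M(t)^2] \leq Ct$) at the cost of not re-using the machinery (Lemma~\ref{lem:drift_y}, the jump-chain holding-time estimates) that the paper wants available for later results. Both are fully rigorous; yours is arguably the more self-contained version of this particular theorem, while the paper's is the more efficient choice within the larger architecture of the article.
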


The next important ingredient is a \emph{local stability}
result, presented in Proposition~\ref{prop:partial-stability} below. 
This result will yield the `stability' part of Theorem~\ref{thm:stability_condition}
directly. Moreover, 
the result says that if a contiguous subset of particles in the system
would,
considered as an isolated system,
satisfy the stability criterion from Theorem~\ref{thm:stability_condition},
then that subset is stable also as a subsystem of the full system.
The intuition
is that the presence of additional particles to the left or right can only stabilize the subsystem further, by preventing the extreme particles from diverging, and/or by contributing stabilizing inwards momentum. 

For $\ell , r \in [N]$
with $\ell \leq r$, we write
$[ \ell ; r ] := \{ x \in [N] : \ell \leq x \leq r \}$, a \emph{discrete interval}.
Note that $[1 ; r ] = [r]$ in our previous notation. 
Recall the definition of $\Delta_C$ from~\eqref{eq:radius-def}.

\begin{proposition}[Local stability]
    \label{prop:partial-stability}
    Suppose that $\ell, r \in [N]$, $\ell < r$, 
    and it holds that
    \begin{equation}
\label{eq:local-stability}
 \frac{U_{\ell,k}}{M_{\ell,k}} < \frac{U_{\ell,r}}{M_{\ell,r}} \text{ for all } k \in [ \ell ; r-1]. 
     \end{equation}
    Then there exist constants $C \in \RP$ and $\delta > 0$ (depending on the $a_i, b_i$, and $m_i$)
    such that 
    \begin{equation}
        \label{eq:exponential-bound}
\sup_{t \in \RP} \Pr \left[ \Delta_{[\ell+1;r]} (t) \geq s \right] \leq C \left[ 1 + \re^{C \Delta_{[\ell+1;r]} (0) } \right] \re^{-\delta s} , \text{ for all } s \in \RP,     \end{equation} 
and, moreover,
    \begin{equation}
    \label{eq:log-bound}
    \limsup_{t \to \infty} \frac{\Delta_{[\ell+1;r]} (t)}{\log t} < \infty, \as \end{equation}
\end{proposition}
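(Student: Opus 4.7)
The plan is to prove Proposition~\ref{prop:partial-stability} via a Foster--Lyapunov argument that exploits the strict inequalities in~\eqref{eq:local-stability}. First, I would rewrite the hypothesis as follows: for every $k \in [\ell+1; r-1]$, set
\[ \varepsilon_k := \frac{U_{k,r}}{M_{k,r}} - \frac{U_{\ell,k}}{M_{\ell,k}}. \]
A short manipulation shows that~\eqref{eq:local-stability} is equivalent to $\varepsilon_k > 0$ for every such~$k$; this expresses the ``velocity excess'' that the right block $[k+1, r]$ has over the left block $[\ell+1, k]$ at every partition point~$k$, i.e., the left block's incentive to catch up with the right.

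Next, I would build a Lyapunov function out of the nonnegative ``block imbalances''
\[ \Phi_k(x) := M_{\ell,k} \sum_{i=k+1}^r m_i x_i - M_{k,r} \sum_{i=\ell+1}^k m_i x_i, \quad k \in [\ell+1; r-1]. \]
A direct expansion using $x_i = x_{\ell+1} + \sum_{j<i} (x_{j+1}-x_j)$ yields the representation $\Phi_k(x) = \sum_{j=\ell+1}^{r-1} w_j^{(k)} (x_{j+1}-x_j)$ with all $w_j^{(k)}>0$; in particular $\Phi_k \geq 0$, and any positive combination $\Psi(x) := \sum_{k=\ell+1}^{r-1} c_k \Phi_k(x)$ is comparable to the span $\Delta_{[\ell+1;r]}$. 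To bound $\mathcal{G}\Psi$ from above, I would decompose $\mathcal{G}\Phi_k$ into three pieces: (i)~a configuration-independent ``bulk'' drift $-M_{\ell,k}M_{k,r}\varepsilon_k<0$, obtained via the mass-weighted argument underlying Theorem~\ref{thm:mass-speed} applied to any stack contained entirely within one of the two blocks; (ii)~a positive contribution from ``cross-boundary'' stacks within the subsystem (stacks straddling index~$k$), which can only activate when $x_{k+1}=x_k$ and is uniformly bounded by a constant depending only on rates and masses; and (iii)~contributions from interactions with the external particles $\ell$ and $r+1$, computed by comparing full-system rates to their isolated-subsystem counterparts under the elastic rule~\eqref{eq:rates}. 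Tuning the weights $c_k$ against the $\varepsilon_k$ and the bounded constants in (ii) and (iii) should secure the pointwise drift bound $\mathcal{G}\Psi(x)\leq -\delta_0$ for some $\delta_0>0$ (or at worst the same bound holding outside a compact set in the vector of inter-particle gaps).

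With a negative drift for $\Psi$ in hand, I would lift $\Psi$ to the exponential Lyapunov function $V(x) := \re^{\gamma\Psi(x)}$ for a small $\gamma > 0$. Because $\Psi$ changes by $O(1)$ on each jump and the total activity rate is uniformly bounded, Taylor expansion yields $\mathcal{G}V(x) \leq -\delta V(x) + C$ for some $\delta, C > 0$. Dynkin's formula applied to $\re^{\delta t} V(X(t))$ then gives $\sup_{t\in\RP}\Exp[V(X(t))] \leq V(X(0)) + C/\delta$; since $V(x) \geq \re^{c'\gamma\Delta_{[\ell+1;r]}(x)}$ for a constant $c'>0$, Markov's inequality delivers~\eqref{eq:exponential-bound}. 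The almost-sure bound~\eqref{eq:log-bound} then follows by a standard Borel--Cantelli argument applied to the events $\{\Delta_{[\ell+1;r]}(t_n) \geq K\log t_n\}$ at integer times $t_n=n$, combined with a uniform bound on the supremum of jumps within unit time intervals (controlled by Poisson concentration thanks to the bounded activity rate).

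The main obstacle will be the drift calculation at the second step. The cross-boundary contributions in (ii) and the external-particle contributions in (iii) --- especially in the case where a stack $\{\ell,\ell+1,\ldots,\ell+m\}$ straddles the left boundary of the subsystem --- are not uniformly \emph{favourable}, and when the stability reserves $\varepsilon_k$ are small the naive linear combination may fail to yield a negative drift for a single choice of weights $c_k$. Likely remedies include replacing each $\Phi_k$ by a modified functional whose drift cancels the bad contribution exactly on the set $\{x_{k+1}=x_k\}$, or building the Lyapunov function inductively over nested sub-clouds using the layered structure of the concave majorant from Definition~\ref{def:majorant}. Verifying the external-interaction terms requires careful case-by-case bookkeeping of how momentum is transferred through boundary-straddling stacks under the elastic rule~\eqref{eq:rates}.
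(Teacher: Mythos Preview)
Your proposal is a reasonable sketch, but the Lyapunov function you choose runs directly into the obstacle you yourself flag, and the paper avoids it by a genuinely different choice of functional.

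You build $\Psi$ as a linear combination $\sum_k c_k \Phi_k$ of block-imbalance functionals. Each $\Phi_k$ can be written as $\langle y^{(k)}, x\rangle$ with $y^{(k)}_i = -M_{k,r}$ for $i\in[\ell+1;k]$ and $y^{(k)}_i = M_{\ell,k}$ for $i\in[k+1;r]$. The drift of such a linear functional is tractable precisely when $y$ is constant on every stack of the current configuration (cf.~Lemma~\ref{lem:drift_y}). Your $y^{(k)}$ fails this exactly on $\{x_k=x_{k+1}\}$, which is why your difficulty~(ii) arises; and because the resulting ``bad'' contribution is of fixed order while the bulk drift $-M_{\ell,k}M_{k,r}\varepsilon_k$ can be arbitrarily small, no fixed choice of weights $c_k$ will repair this. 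Your suggested remedies are plausible but vague, and amount to starting over with a different functional.

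The paper instead takes $\Psi(x)=\|\psi_{\ell,r}(x)\|_{\ell+1,r}$ with $\psi_{\ell,r}(x)_i=x_i-\bar x$ (the mass-weighted deviation from the subsystem's own centre of mass). This $y=\psi_{\ell,r}(x)$ is automatically constant on every stack, so there is no cross-boundary correction at all: difficulty~(ii) disappears. Moreover $y\le 0$ on the leftmost stack and $y\ge 0$ on the rightmost stack of the subsystem, which forces the external-interaction terms in~\eqref{eq:drift_y-partial} to have the stabilising sign; this disposes of difficulty~(iii) as well. The remaining drift $\langle y,u\rangle_{\ell+1,r}$ is then bounded above by $-\varepsilon(x_r-x_{\ell+1})$ via a summation-by-parts identity using the potentials $V_k:=U_{\ell,k}-M_{\ell,k}U_{\ell,r}/M_{\ell,r}$, which are strictly negative on $(\ell,r)$ exactly under~\eqref{eq:local-stability}. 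From there the exponentiation, Hajek-type tail bound, and Borel--Cantelli steps are essentially as you outline.

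In short: your endgame is correct, but the paper's choice of Lyapunov function---a norm adapted to the configuration rather than a fixed linear form---is the key idea that makes the drift calculation go through without the case analysis you anticipate.
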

\begin{remark}
    Note that condition~\eqref{eq:local-stability} involving the functions $U_k, M_k$ over values $k \in [\ell;r]$ translates to stability of particles with labels $[\ell +1;k]$ at~\eqref{eq:exponential-bound}.
    This is similar to the stability criterion in Theorem~\ref{thm:cloud_decomposition}, reflecting that there is no particle with label~$0$.
\end{remark}  

\begin{proof}[Proof of Theorem~\ref{thm:stability_condition}]
The statement of Proposition~\ref{prop:partial-stability} with $\ell=0$ and $r =N$ shows that the condition~\eqref{eq:stability_condition} is sufficient for stability, in the sense expressed in part~\ref{thm:stability_condition-b} of the theorem. Moreover, since $\max_{i \in [N]} | X_i (t) - G(t) | \leq \Delta_{[N]} (t)$ for all $t \in \RP$, it follows also that $\limsup_{t \to \infty} t^{-1} | X_i (t) - G(t) | =0$, a.s., for every $i \in [N]$. Together with Theorem~\ref{thm:mass-speed}, this yields part~\ref{thm:stability_condition-a} of the theorem. 

The final statement of the theorem, that~\eqref{eq:stability_condition} is necessary for part~\ref{thm:stability_condition-b},  
is most easily deduced from Theorem~\ref{thm:cloud_decomposition}\ref{thm:cloud_decomposition-c} (the proof of that result comes only in~\S\ref{sec:proofs-instability}, but nowhere uses this final part of Theorem~\ref{thm:stability_condition}). Indeed, if~\eqref{eq:stability_condition} fails then there is at least one $j \in [\nu-1]$ for which $v_j \geq v_{j+1}$. If $v_j > v_{j+1}$, then Theorem~\ref{thm:cloud_decomposition}\ref{thm:cloud_decomposition-c} shows that $\liminf_{t \to \infty} t^{-1} \Delta_{[N]} (t) \geq v_j - v_{j+1} >0$, contradicting part~\ref{thm:stability_condition-b}. If $v_j = v_{j+1}$,
 then Theorem~\ref{thm:cloud_decomposition}\ref{thm:cloud_decomposition-c} shows that, for some $\eps>0$, $\limsup_{t \to \infty} t^{-\eps} \Delta_{[N]} (t) \geq 1$, a.s., say,
 which also contradicts  part~\ref{thm:stability_condition-b}.
\end{proof}

Before moving on to the proofs of Theorem~\ref{thm:mass-speed} and Proposition~\ref{prop:partial-stability}, we introduce some notation that will allow
us to work with increments of functionals of  particle system configurations. Define 
\begin{equation}
    \label{eq:tau-def}
\tau := \inf \{ t \in \RP : X(t) \neq X(0)\}, 
\end{equation}
the first jump time of the Markov chain. Write $\Pr_x, \Exp_x$ for probabilities
and expectations in the case where the initial configuration is~$X(0) = x \in \bbX_N$. 
The key step in the proof of Proposition~\ref{prop:partial-stability}
is the following Lyapunov function estimate. Recall the definitions of the inter-particle distance function $D : \bbX_N \to \ZP^{N-1}$ from~\eqref{eq:D-def} and $\eta (t) = D(X(t))$ defined at~\eqref{eq:eta-def}. Observe that $\tau$ defined by~\eqref{eq:tau-def} is also the first jump time of the Markov chain~$\eta$.

\begin{lemma}
    \label{lem:lyapunov}
    Suppose that $\ell, r \in [N]$, $\ell < r$, and
        that~\eqref{eq:local-stability} holds. 
    Then there exist a function $F : \ZP^{N-1} \to \RP$ and constants $B, R \in \RP$ and $\eps >0$ such that 
    \begin{align}
        \label{eq:F-integrable}
       \IE_x    F(\eta(t) )  & < \infty, \text{ for all } x \in \bbX_N \text{ and all } t \in \RP;\\
        \label{eq:F-bound}
       \IP_x  [ | F(\eta(\tau)) - F(\eta(0)) | \leq B  ] & = 1, \text{ for all } x \in \bbX_N; \\
\label{eq:foster}
\IE_x [ F ( \eta(\tau) )  - F( \eta(0) ) ]
& \leq - \eps,
 \text{ for all } x \in \bbX_N \text{ with } \| D(x) \| > R.
\end{align}
    \end{lemma}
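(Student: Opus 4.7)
The plan is to construct $F$ as a positive linear combination of ``partial centre-of-mass gap'' functionals motivated by the concave-majorant interpretation of~\eqref{eq:local-stability}. For each $k \in [\ell+1;r-1]$ define
\[
\psi_k := \frac{1}{M_{k,r}}\sum_{i=k+1}^{r} m_i X_i - \frac{1}{M_{\ell,k}}\sum_{i=\ell+1}^{k} m_i X_i ,
\]
the separation of the right-block and left-block partial centres of mass at the split $k$. Since the $X_i$ are weakly ordered, $\psi_k \geq 0$, and since the shift by $X_{\ell+1}$ cancels between the two sums, $\psi_k$ is a linear function of $(\eta_{\ell+1},\ldots,\eta_{r-1})$ with non-negative coefficients. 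By the mediant inequality applied to the identities $U_{\ell,r}=U_{\ell,k}+U_{k,r}$ and $M_{\ell,r}=M_{\ell,k}+M_{k,r}$ together with~\eqref{eq:local-stability}, one obtains $U_{\ell,k}/M_{\ell,k}<U_{\ell,r}/M_{\ell,r}<U_{k,r}/M_{k,r}$. Hence Theorem~\ref{thm:mass-speed} applied to each of the two blocks in isolation predicts an ``intrinsic'' drift for $\psi_k$ equal to $U_{\ell,k}/M_{\ell,k}-U_{k,r}/M_{k,r}<0$.

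Take $F(\eta):=\sum_{k=\ell+1}^{r-1}c_k\psi_k$ with positive weights $c_k$ to be fixed by the drift analysis. Both~\eqref{eq:F-integrable} and~\eqref{eq:F-bound} follow directly from linearity: a single jump of $\eta$ changes at most two consecutive coordinates by $\pm 1$, so $|F(\eta(\tau))-F(\eta(0))|$ is uniformly bounded by a constant $B$ depending only on the $c_k$ and the masses, giving~\eqref{eq:F-bound}. Moreover, the total jump rate of $\eta$ is uniformly bounded above by a constant $\Lambda$ depending only on the $a_i,b_i,m_i$, so the number of jumps in $[0,t]$ is stochastically dominated by a Poisson random variable of mean $\Lambda t$, yielding $\IE_x F(\eta(t)) \le F(\eta(0)) + B\Lambda t < \infty$ for every $x$ and $t$.

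The main step is the Foster-type estimate~\eqref{eq:foster}. Using~\eqref{eq:rates}, I would compute the per-jump drift of each $\psi_k$ by summing contributions over the stacks present in the current configuration. Two structural features of the elastic rule streamline the analysis. First, elastic momentum preservation within each of the two blocks $[\ell+1;k]$ and $[k+1;r]$ ensures that their intra-block dynamics aggregate to exactly the intrinsic drift $U_{\ell,k}/M_{\ell,k}-U_{k,r}/M_{k,r}<0$ for $\psi_k$, regardless of how the particles inside each block happen to be stacked. Second, all remaining contributions come only from cross-block collisions (when particles $k$ and $k+1$ share a site) or from boundary collisions with particles outside $[\ell+1;r]$, and these are explicitly computable from~\eqref{eq:rates} in terms of which particles currently share sites. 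Choosing the weights $c_k$ so that the intrinsic (strictly negative) drifts dominate the coupling terms uniformly, and exploiting the strict inequalities in~\eqref{eq:local-stability} at every intermediate index, yields the required bound $\IE_x[F(\eta(\tau))-F(\eta(0))] \le -\eps$ whenever the internal gaps $\eta_{\ell+1},\ldots,\eta_{r-1}$ are sufficiently large.

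The main obstacle is the bookkeeping of elastic momentum flows and the precise choice of the weights $c_k$. Cross-block and external coupling can contribute positive drift to individual $\psi_k$ in tight configurations; for example, when the whole subsystem is a single stack, each $\psi_k$ has strictly positive per-jump drift, and this positive contribution must be absorbed by the strictly negative intrinsic drift through the weighted sum. The strictness of~\eqref{eq:local-stability} at each index is what provides the required slack. Configurations where the internal gaps are small but some external $\eta_j$ is large, which on their face could threaten the literal condition~\eqref{eq:foster}, are accommodated either by restricting the effective bounded set to $\{\Delta_{[\ell+1;r]} \le R\}$ (still enough to yield Proposition~\ref{prop:partial-stability}) or by supplementing $F$ with bounded auxiliary terms in the external coordinates; the latter leaves~\eqref{eq:F-bound} intact.
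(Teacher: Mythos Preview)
Your linear Lyapunov function does not work in general, and this is a genuine gap rather than just missing bookkeeping. The claim that one can ``choose the weights $c_k$ so that the intrinsic (strictly negative) drifts dominate the coupling terms uniformly'' fails already for $N=3$. Take $\ell=0$, $r=3$, unit masses, and rates $(a_1,b_1,a_2,b_2,a_3,b_3)=(0,10,7,7,1,0)$, so $u=(10,0,-1)$; condition~\eqref{eq:local-stability} holds since $U_1/M_1=-10$, $U_2/M_2=-5$, $U_3/M_3=-3$. Any function of your form is $F=w_1\eta_1+w_2\eta_2$ with $w_1,w_2>0$. On the face $\{\eta_1=0,\eta_2>0\}$ the per-unit-time drift of $(\eta_1,\eta_2)$ is $(24,-18)$, so the Foster condition forces $w_1/w_2<3/4$; on the face $\{\eta_1>0,\eta_2=0\}$ the drift is $(-18,15)$, forcing $w_1/w_2>5/6$. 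These are incompatible, and both faces contain configurations with $\|D(x)\|$ arbitrarily large. Your description of the cross-block coupling is also misleading: the correction to $\psi_k$ depends on the \emph{entire} stack containing $k$ and $k+1$, not merely on whether $\eta_k=0$, so the interaction is not as local as you suggest.

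The paper's approach is essentially different: it takes $F(\eta)=\Psi(x):=\|\psi_{\ell,r}(x)\|_{\ell+1,r}$, the mass-weighted standard deviation of the subsystem $[\ell+1;r]$ about its centre of mass. The point is that this is \emph{adaptive}: the increment of $\Psi$ is, to leading order, $\langle \psi_{\ell,r}(x)/\Psi(x),\ee\rangle_{\ell+1,r}$, so the effective ``direction'' changes with the configuration. The drift of $\Psi^2$ is controlled by $\Exp_x\langle\psi_{\ell,r}(x),\ee\rangle_{\ell+1,r}$, and a partial-summation identity using $V_k:=U_{\ell,k}-M_{\ell,k}U_{\ell,r}/M_{\ell,r}$ (which is strictly negative on $[\ell+1;r-1]$ by~\eqref{eq:local-stability}) bounds this by $-\eps(x_r-x_{\ell+1})\le -c\,\Psi(x)$. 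In the counterexample above, $\Psi$ indeed has uniformly negative drift on both faces because the unit vector $\psi_{\ell,r}(x)/\Psi(x)$ points in different directions in the two cases. A fixed linear functional cannot reproduce this, and no amount of weight-tuning rescues your construction.
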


The function~$F$ will be a (weighted) norm describing the displacement 
of the particles with labels in $[\ell+1;r]$ relative to their centre of mass. 
In \S\ref{sec:lyapunov} we give the formal definition of~$F$, which requires introducing notation
for a suitable weighted norm and inner product, and then prove Lemma~\ref{lem:lyapunov} and deduce Proposition~\ref{prop:partial-stability}. The Markov-chain computations
required to establish Lemma~\ref{lem:lyapunov} also allow us to prove Theorem~\ref{thm:mass-speed}, so that proof is also in~\S\ref{sec:lyapunov}.

\subsection{Lyapunov function computations}
\label{sec:lyapunov}

 For $x,y\in \R^N$ and $\ell, r \in [N]$ with $\ell \leq r$, 
consider 
the weighted 
positive semi-definite form
\[ \langle x,y \rangle_{\ell,r} := \sum_{i \in [\ell;r]} m_i x_i y_i \]
and the associated seminorm $\|x\|_{\ell,r} := \sqrt{\langle x,x \rangle_{\ell,r}}$.
In the special case where $\ell =1$ and $r=N$, we write $\langle x,y \rangle_{N} := \langle x,y \rangle_{1,N}$
and $\|x\|_{N} := \|x\|_{1,N}$.

Let $\ones$ be the vector in $\R^N$ with all components equal to~$1$.
Observe that $ \langle \ones ,\ones \rangle_{\ell+1,r} = M_r - M_\ell = M_{\ell,r}$
in the notation at~\eqref{eq:U-M-diff-def}. In particular, $\langle \ones, \ones \rangle_N =
\langle \ones, \ones \rangle_{1,N}  = M_N$, and the centre of mass 
of a configuration $x\in\bbX_N$ is 
\begin{equation}
    \label{eq:centre-of-mass-config}
    g(x) := M_N^{-1} \langle x,\ones \rangle_N = \frac{1}{M_N} \sum_{i \in [N]} m_i x_i .
\end{equation}

For a configuration $x\in\bbX_N$,
let~$R_x$ be the total jump rate at~$x$,
i.e.,
\begin{equation}
\label{eq:R-def}
R_x := \lim_{h \to 0} h^{-1} \Pr_x ( X(h) \neq x ) = \sum_{i \in [N]} \bigl( A_i (x) + B_i (x) \bigr) , \end{equation}
where $A_i, B_i$ are defined at~\eqref{eq:rates}. 
It is not hard to check (see formula~\eqref{eq:expression_R_x} below) that
\begin{equation}
    \label{eq:lower-bound}
    \inf_{x \in \bbX_N} R_x > 0 \text{ provided } \sum_{i \in [N]} ( a_i + b_i) > 0.
\end{equation}
We will tacitly assume that $\sum_{i \in [N]} ( a_i + b_i) > 0$ for the rest of this section;
the case $\sum_{i \in [N]} ( a_i + b_i) = 0$ being trivial (but formally covered by the
results of~\S\ref{sec:definitions}). 
When a jump occurs from~$x$, 
the Markov chain transitions to some $x \pm e_k$, where $e_k \in \bbX_N$ is the unit vector with~1 in coordinate~$k$. The associated transition probabilities are, for every $k \in [N]$,
\begin{equation}
    \label{eq:jump-chain}
    \Pr_x [ X(\tau) = x - e_k ] = \frac{A_k (x)}{R_x}, \text{ and }
    \Pr_x [ X(\tau) = x + e_k ] = \frac{B_k (x)}{R_x},
\end{equation}
where $A_k, B_k$ are defined at~\eqref{eq:rates} and $R_x$ is given by~\eqref{eq:R-def}.
In what follows we write 
\begin{equation}
    \label{eq:e-def}
    \ee := X(\tau) -X(0)
\end{equation}
for the first increment, so that $\Pr_x ( \ee  = e)$
describe the transition probabilities for the (discrete-time) jump chain
as given by~\eqref{eq:jump-chain}. In particular, for $H : \bbX_N \to \RP$,
\begin{equation}
    \label{eq:H-increment}
    \Exp_x [ H ( X(\tau)) ] = \Exp_x H (x + \ee ), \text{ for all } x \in \bbX_N.
\end{equation}
Given $x \in \bbX_N$ and $k \in [N]$, we define $k_-, k_+ \in [N]$ by
\begin{equation}
    \label{eq:k-plus-minus}
k_- := k_-(x) :=\min \{ i\in[N] : x_i = x_k \}, ~ k_+ := k_+(x) := \max \{ i\in[N] : x_i = x_k \},    
\end{equation}
the minimal and maximal labels of particles sharing the same site as~$k$ in configuration~$x$.
We also define an `off-lattice' extension of $\bbX_N$ defined at~\eqref{eq:configuration-space}
by
\begin{equation}
\label{eq:configuration-space-shifted}
\bbX'_N := \bigl\{ (x_i )_{i \in [N]} \in \R^N : x_1 \leq x_2 \cdots \leq x_N \bigr\}.
\end{equation}
Write $a := (a_i)_{i \in [N]}$ and $b:= (b_i)_{i \in [N]}$ for the jump rates, and
recall from~\eqref{eq:intrinsic-speed} that $u=b-a$.
The following lemma enables us to work with expected functional increments;
for example, the hypotheses on $x,y$ in the statement are satisfied whenever $y_k = f (x_k)$ for all $k \in [N]$ and some non-decreasing $f : \Z \to \R$. 

\begin{lemma}
\label{lem:drift_y}
 Let $x\in\bbX_N$ and assume that $y\in\bbX'_N$
is such that $y_k=y_{k+1}$ for all~$k \in [N]$
such that $x_k=x_{k+1}$. Then 
for $\ee$ as defined at~\eqref{eq:e-def}, it holds that
\begin{equation}
\label{eq:drift_y}
\IE_x \langle y,\ee \rangle_{N} 
= R_x^{-1} \langle  y , u \rangle_N .
\end{equation}
More generally, for $\ell, r \in [N]$, 
with $\ell \leq r$ and 
 $\ell_\pm, r_\pm$ as defined in~\eqref{eq:k-plus-minus},
\begin{align}
\label{eq:drift_y-partial}
 R_x \IE_x \langle y,\ee \rangle_{\ell,r} 
  & = \langle y ,u \rangle_{\ell , r} 
  + \2 { \{ \ell > \ell_- \}} \langle y, b \rangle_{\ell_-, \ell - 1} 
  + \2 { \{ \ell > \ell_- \}} \langle y, a \rangle_{\ell_-, \ell_+}  - \2 {\{ r < r_+ \}} \langle y , a \rangle_{r+1, r_+} \nonumber\\
& {} \qquad{}  
  - \2 {\{ r < r_+ \}} \langle y ,b \rangle_{r_- ,r_+}.
\end{align}
In particular, if $\min_{k \in [N]} y_k \geq 0$, we have the inequalities
\begin{align}
\label{eq:drift-y-left}
\IE_x \langle y,\ee \rangle_{1,r} 
& \leq R_x^{-1}\langle y , u \rangle_{1, r}, \\
\label{eq:drift-y-right}
\IE_x \langle y,\ee \rangle_{\ell,N} 
& \geq R_x^{-1}\langle y ,u \rangle_{\ell, N}.
\end{align}
\end{lemma}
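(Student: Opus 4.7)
The plan is a direct computation from the explicit jump rates at~\eqref{eq:rates} and the expected-increment formula~\eqref{eq:H-increment}. Applying~\eqref{eq:H-increment} to the linear functional $H(\cdot) = \langle y, \cdot \rangle_{\ell, r}$, together with the one-step transition law in~\eqref{eq:jump-chain}, first gives
\begin{equation*}
R_x \Exp_x \langle y, \ee \rangle_{\ell, r} = \sum_{k \in [\ell;r]} m_k y_k \bigl( B_k(x) - A_k(x) \bigr) ,
\end{equation*}
since a jump of particle~$k$ alters $\langle y, \cdot \rangle_{\ell, r}$ by $\pm m_k y_k$ exactly when $k \in [\ell;r]$ and by $0$ otherwise.

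The next step is to reorganize this sum by occupied site, using the observation that $A_k(x) \neq 0$ only when $k = k_-(x)$ (the bottom of its stack), with $m_k A_k(x) = \sum_{j = k_-}^{k_+} m_j a_j$, and symmetrically $B_k(x) \neq 0$ only when $k = k_+(x)$ with $m_k B_k(x) = \sum_{j = k_-}^{k_+} m_j b_j$. Each occupied site therefore contributes at most two terms: a rightward-move term $y_{k_+} \sum_{j=k_-}^{k_+} m_j b_j$ scaled by $\2{\{k_+ \in [\ell;r]\}}$ and a leftward-move term $-y_{k_-} \sum_{j=k_-}^{k_+} m_j a_j$ scaled by $\2{\{k_- \in [\ell;r]\}}$.

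Now I would apply the hypothesis that $y_k = y_{k+1}$ whenever $x_k = x_{k+1}$: this makes $y$ constant on each stack $[k_-;k_+]$, so its common value can be pulled inside the pooled rate sums, and with $u = b - a$ each stack entirely inside $[\ell; r]$ contributes exactly $\sum_{j = k_-}^{k_+} m_j y_j u_j$, which is that stack's share of $\langle y, u \rangle_{\ell, r}$. Stacks entirely outside $[\ell; r]$ contribute zero on both sides. The only remaining discrepancy comes from the (at most) two boundary stacks: the one containing particle $\ell$, relevant when $\ell_- < \ell$, and the one containing particle $r$, relevant when $r_+ > r$. For each of these, a short case split---invoking $y$-constancy to pull $y$ through the pooled rate sum and splitting that sum into its inside-$[\ell;r]$ and outside-$[\ell;r]$ pieces---recovers the correction terms listed in~\eqref{eq:drift_y-partial}. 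Taking $\ell = 1$, $r = N$ kills both indicators (since $\ell_- \geq 1$ and $r_+ \leq N$) and yields~\eqref{eq:drift_y}.

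For the one-sided inequalities \eqref{eq:drift-y-left} and~\eqref{eq:drift-y-right}, note that when $y \geq 0$ every boundary-correction term in~\eqref{eq:drift_y-partial} is a non-negative combination of products of $m_j$, $y_j$ and a rate; the left-boundary corrections appear with a plus sign and the right-boundary corrections with a minus sign. Setting $\ell = 1$ kills the left-boundary indicators (as $\ell_- \geq 1$ forces $\ell_- \geq \ell$), leaving only the non-positive right-boundary corrections, which yields~\eqref{eq:drift-y-left}; the derivation of~\eqref{eq:drift-y-right} is symmetric. I expect the only real obstacle is keeping the boundary book-keeping tidy; the algebra is elementary, and the structural point is that the stack-constancy hypothesis on~$y$ is precisely what allows the pooled rates at each stack to recombine cleanly with the intrinsic velocities~$u_j$.
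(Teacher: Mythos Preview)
Your proposal is correct and follows essentially the same direct computation as the paper: the paper introduces explicit stack notation and enumerates the possible jumps from each stack, while you start from the equivalent identity $R_x \Exp_x \langle y, \ee \rangle_{\ell,r} = \sum_{k \in [\ell;r]} m_k y_k (B_k(x) - A_k(x))$ and then regroup by stack, but the substance---using $y$-constancy on stacks to recombine pooled rates into $\langle y, u\rangle$, with boundary corrections at the $\ell$- and $r$-stacks---is identical.
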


To prepare for the proof, we introduce some convenient terminology for describing configurations.
Take $s \in [N]$ and
$h_1, \ldots, h_s \in \N$ such that $\sum_{j \in [s]} h_j = N$.
For $k \in [s]$, write $H_k := \sum_{j=1}^k h_j$, so that $H_0 =0$ and $H_s = N$.
We say that $x = (x_1, \ldots, x_N) \in \bbX_N$
is a configuration with $s$~\emph{stacks} of \emph{heights} $h:= (h_1, \ldots, h_s)$, 
and write $x \in \bbX^{s,h}_N$, if
\begin{equation}
    \label{eq:stacks}
x_{1} < x_{H_1+1} < \cdots < x_{H_{s-1}+1}, \text{ and, for all $j \in [s]$ and $i \in [h_j]$, }
x_{H_{j-1} +i} = x_{H_{j-1} +1}.
\end{equation}
Figure~\ref{fig:elastic} shows a configuration $x \in \bbX^{3,h}_6$
of $6$~particles in~$s=3$ stacks of heights $h = (1,2,3)$.

\begin{proof}[Proof of Lemma~\ref{lem:drift_y}]
 Suppose that $x \in \bbX_N^{s,h}$ for $s \in [N]$
and $h = (h_1,\ldots,h_s)$, as at~\eqref{eq:stacks}.
First observe, by~\eqref{eq:R-def} and~\eqref{eq:rates}, we may write
\begin{equation}
\label{eq:expression_R_x}
 R_x = \sum_{j=1}^s 
  \sum_{i=1}^{h_j}
  \Big( a_{H_{j-1} +i} \frac{m_{H_{j-1}+i}}{m_{H_{j-1}+1}}
   + b_{H_{j-1}+i}\frac{m_{H_{j-1}+i}}{m_{H_j}}\Big),
\end{equation}
and that, by~\eqref{eq:jump-chain}, the increment~$\ee$ is equal to~$-e_{H_{j-1}+1}$ 
with probability 
$R_x^{-1}\sum_{i=1}^{h_j}
  a_{H_{j-1}+i}\frac{m_{H_{j-1}+i}}{m_{H_{j-1}+1}}$
and to~$e_{H_j}$ with probability
$R_x^{-1}\sum_{i=1}^{h_j}
  b_{H_{j-1}+i}\frac{m_{H_{j-1}+i}}{m_{H_j}}$.
 Therefore, 
 \begin{align*}
\label{eq:drift_y-partial}
 R_x \IE_x \langle y,\ee \rangle_{\ell,r} 
   &  = \langle y ,u \rangle_{\ell_+ + 1, r_- - 1} 
   + \langle y , b - a\2 { \{ \ell = \ell_- \} } \rangle_{\ell_-, \ell_+}  + \langle y , b \2 { \{ r = r_+ \}} - a \rangle_{r_-, r_+} .
\end{align*}
This yields~\eqref{eq:drift_y-partial}. Taking $\ell = 1$ (which certainly has $\ell = \ell_-$)
in~\eqref{eq:drift_y-partial}
we get
\begin{align*}
R_x \IE_x \langle y,\ee \rangle_{\ell,r} 
&   = \langle y , u \rangle_{\ell, r}
   - \2 {\{ r < r_+ \}} \langle y ,b \rangle_{r_-, r_+}
   - \2 {\{ r < r_+ \}} \langle y, a \rangle_{r +1, r_+}   \leq \langle y, u \rangle_{\ell_- , r_+}, 
\end{align*}
provided all~$y_k \geq 0$, 
as claimed in~\eqref{eq:drift-y-left}. Similarly, taking $r = r_+ = N$ in~\eqref{eq:drift_y-partial} yields~\eqref{eq:drift-y-right}. Taking both $\ell =1$ and $r =N$ in~\eqref{eq:drift_y-partial}, we verify~\eqref{eq:drift_y}.
\end{proof}

Define $\tau_0 := 0$ and $\tau_1 := \tau$ as at~\eqref{eq:tau-def}, and then, iteratively, define \begin{equation}
    \label{eq:tau-sequence}
\tau_{n+1} := \inf \{ t \geq \tau_n : X(t) \neq X(\tau_n) \}, \text{ for every } n \in \ZP, 
\end{equation}
so that $0 = \tau_0 < \tau_1 < \tau_2 < \cdots$
are the (a.s.~finite) jump times of~$X$. 
Let $\cF_t := \sigma ( X(s) : 0 \leq s \leq t)$, the $\sigma$-algebra
generated by $X$~up to time~$t \in \RP$.
By~\eqref{eq:R-def}, we have that, given~$\cF_{\tau_n}$,
the holding time~$\tau_{n+1} - \tau_n$ is exponentially distributed with parameter~$R_{X(\tau_n)}$,
which by~\eqref{eq:R-def}--\eqref{eq:lower-bound}
satisfies $\delta < R_x < \delta^{-1}$ for every $x \in \bbX_N$ and some constant $\delta \in (0,1)$ depending only on the $a_i, b_i$, and $m_i$. 
Observe that since $\eta = D(X)$ jumps if and only if $X$ jumps, the $\tau_n$ given by~\eqref{eq:tau-sequence} are also the jump times of~$\eta$, a fact that we will use in the proof of Proposition~\ref{prop:partial-stability} below.
First we can complete the proof of Theorem~\ref{thm:mass-speed}.

\begin{proof}[Proof of Theorem~\ref{thm:mass-speed}]
Comparison of notation~\eqref{eq:centre-of-mass} for $G$ and~\eqref{eq:centre-of-mass-config} for $g$ reveals that $G(t) = g (X(t))$. Then by~\eqref{eq:H-increment} and an application of
Lemma~\ref{lem:drift_y}
with $y=\ones$ shows, using the formula~\eqref{eq:centre-of-mass-config}, 
\begin{align}
\label{eq:com-drift}
    \lim_{h \to 0} h^{-1} \Exp [ G (t+h) - G(t) \mid X(t) = x ] & = R_x  \Exp_x [ g (X(\tau)) - g(x) ] \nonumber\\
    & = R_x \bigl( \Exp_x g (x + \ee) - g(x) \bigr) \nonumber\\
    & = R_x \Exp_x \langle  \ee, \ones \rangle_N  \nonumber\\
    & = - \frac{U_N}{M_N},  \text{ for all } x \in \bbX_N.
\end{align}
Since the final expression in~\eqref{eq:com-drift} is independent of~$x$, we verify~\eqref{eq:com-increment}. Moreover, considering the associated jump chain, for every $k \in \ZP$, 
\begin{equation}
\label{eq:com-jump} \Exp [ g (X(\tau_{k+1}) ) - g (X(\tau_k)) \mid \cF_{\tau_k} ] = - \frac{U_N}{M_N R_{X(\tau_k)}} , \as \end{equation}
As argued below~\eqref{eq:tau-sequence},
the holding times~$\tau_{k+1} - \tau_k$ are exponentially distributed
with uniformly bounded rates. 
 A straightforward Azuma--Hoeffding bound and application of the Borel--Cantelli lemma then shows that
\begin{equation}
\label{eq:slln-1}  \lim_{k \to \infty} \frac{1}{k} \left[ \tau_k - A_k    \right] = 0 , \as,  \end{equation}
 where $A_0 :=0$, and $A_k := \sum_{i=0}^{k-1} 1/R_{X(\tau_i)}$ for $k \in \N$. 
Since, a.s., $\delta < k^{-1} A_k < \delta^{-1}$ for all $k \in \N$,
it follows from~\eqref{eq:slln-1} that $\tau_k/A_k \to 1$, a.s., as $k \to \infty$.
Furthermore, there exists $B \in \RP$ such that $| g ( X(\tau_{k+1} ) ) - g (X ({\tau_k} )) | \leq B$ for all $k \in \ZP$, a.s.
 The standard Doob construction, with compensator given via~\eqref{eq:com-jump}, then shows that $M_k := g(X(\tau_k)) + (U_N/M_N) A_k$ defines a martingale
 with uniformly bounded increments, so another application of the  Azuma--Hoeffding inequality yields
\begin{equation}
    \label{eq:slln-2}
     \lim_{k \to \infty} \frac{1}{k} \left[ g (X(\tau_k)) + (U_N/M_N) A_k    \right]  = 0, \as
\end{equation}
 Combining~\eqref{eq:slln-1} and~\eqref{eq:slln-2}, we conclude that
$\lim_{k \to \infty} g (X(\tau_k))/ \tau_k =  - U_N/M_N$, a.s.
A standard interpolation argument now shows that
 $\lim_{t \to \infty} t^{-1} G(t) = -U_N/M_N$, a.s.
\end{proof}

Next we turn to the proof of Lemma~\ref{lem:lyapunov}.

\begin{proof}[Proof of Lemma~\ref{lem:lyapunov}]
For~$x\in\bbX_N$, 
  write $x_{[\ell;r]}= ( x^{(1)}_{[\ell;r]} , \ldots, x^{(N)}_{[\ell;r]}) \in \bbX_N$, where
\[  x^{(k)}_{[\ell;r]} =  x_k \2 { \{ k \in [\ell ; r] \} },\]
i.e.,  projection onto the subspace spanned by coordinates~$[\ell;k]$.
Define $\psi_{\ell,r} : \bbX_N \to \bbX'_N$ by
\begin{equation}
\label{eq:psi-def}
\psi_{\ell,r} (x) := x_{[\ell+1;r]} - M_{\ell,r}^{-1} \langle x,\ones\rangle_{\ell+1,r}  \ones_{[\ell+1;r]} , \text{ for } x \in\bbX_N,
\end{equation}
i.e., projection of~$x$ onto coordinates $[\ell+1;r]$, translated by its centre of mass.
Note that~$\psi_{\ell,r}$ is invariant under translation of~$x$, since if $y_i = x_i + \alpha$ for all $i \in [N]$ and some $\alpha \in \R$, $\langle y , \ones \rangle_{\ell,r} = \langle x , \ones \rangle_{\ell,r} + \alpha M_{\ell,r}$.

Since $\psi_{\ell,r} (x)$ depends only on the $[\ell+1;r]$ coordinates
of $x$, we could view $\psi_{\ell,r}$ as acting from $\bbX_{r-\ell}$ to $\bbX'_{r-\ell}$,
but we prefer to keep the full vectors (including all zeros) since coordinate labels correspond to specific masses. The Lyapunov function~$F$ that appears in~\eqref{eq:foster} will be derived from
\begin{equation}
    \label{eq:Psi-def}
    \Psi (x) := \| \psi_{\ell,r} (x) \|_{\ell+1,r}, \text{ for all } x \in \bbX_N
    ,
\end{equation}
where $\psi_{\ell,r}$ is given by~\eqref{eq:psi-def} (we suppress dependence on $\ell,r$ in the notation $\Psi$). 
Since $\Psi$ is invariant under translation of~$x$, $\Psi(x)$ can be written as a function of~$D(x)$
as given at~\eqref{eq:D-def}; indeed,   define $D^{-1} : \ZP^{N-1} \to \bbX_N$ by 
$D^{-1} (z_1, \ldots, z_{N-1} ) := ( 0 , z_1, z_1+z_2, \ldots, z_1 + \cdots + Z_{N-1} )$, and then 
\begin{equation}
    \label{eq:F-def}
    F ( z ) := \Psi (D^{-1} (z) ) \text{ for all } z \in \ZP^{N-1}
\end{equation}
will be the~$F$ that appears in~\eqref{eq:foster}. We also note from~\eqref{eq:Psi-def} and~\eqref{eq:psi-def} that
\begin{align}
\label{eq:Psi-mass-variance}
    \Psi (x)^2 &=  \| \psi_{\ell,r} (x) \|_{\ell+1,r}^2 = M_{\ell,r} \sum_{i \in [\ell+1;r]} \frac{m_i}{M_{\ell,r}} \left[ x_i - \sum_{j \in [\ell+1;r]} \frac{m_j}{M_{\ell,r}} x_j \right]^2 ,
\end{align}
which exhibits~$\Psi^2$ as the variance of the mass distribution if particles $\ell+1, \ldots, r$ are placed at locations $x_{\ell+1}, \ldots, x_r$. It follows from~\eqref{eq:Psi-mass-variance} that
\begin{equation}
    \label{eq:Psi-upper-bound}
    \Psi (x)^2 \leq M_{\ell,r} \left| \max_{i \in [\ell+1;r]} x_i - \min_{i \in [\ell+1;r]} x_i \right|^2
    = M_{\ell,r} (x_r - x_{\ell+1} )^2 .
\end{equation}
In the other direction, since $a^2 +b^2 \geq (a+b)^2/2$, we get from~\eqref{eq:Psi-mass-variance} that
\begin{equation}
    \label{eq:Psi-lower-bound}
    \Psi (x)^2 \geq m_{\ell+1} (\bar x - x_{\ell+1})^2 + m_r ( x_r - \bar x)^2 
    \geq \frac{\min ( m_{\ell+1}, m_r )}{2} (x_r - x_{\ell+1} )^2 ,
\end{equation}
where we wrote $\bar x := M_{\ell,r}^{-1} \sum_{j \in [\ell+1;r]} m_j x_j$. One can interpret bounds~\eqref{eq:Psi-upper-bound}--\eqref{eq:Psi-lower-bound} as quantifying the fact that
$x_r - x_{\ell+1} = \max_{i,j \in [\ell+1;r]} | x_i - x_j|$
is equivalent to the norm $\| \psi_{\ell,r} (x) \|_{\ell+1,r}$.

We need to study the increments of~$\Psi$ applied to Markov process $X$.
Since $\langle \ones,\ones \rangle_{\ell+1,r} =M_{\ell,r}$, $\psi_{\ell,r}$ given by~\eqref{eq:psi-def} satisfies the orthogonality relation
\begin{equation}
    \label{eq:orthogonality}
    \langle \psi_{\ell,r} (x) , \ones_{[\ell+1;r]} \rangle_{\ell+1,r} = 0, \text{ for all } x \in \bbX_N .
\end{equation}
Let $e=\pm e_j$ be (plus or minus)
one of the coordinate
unit vectors.
We have $\psi_{\ell,r} (x+e) = \psi_{\ell,r} (x)$
whenever $j \notin [\ell+1;r]$. So fix $j \in [\ell+1;r]$. Then
we compute
\begin{align*}
\|\psi_{\ell,r} (x+e)\|^2_{\ell+1,r} &=
 \bigl\|x+e - M_{\ell,r}^{-1} \langle x+e,\ones \rangle_{\ell+1,r} \ones_{[\ell+1;r]} \bigr\|_{\ell+1,r}^2 \\
 &= \bigl\|  \psi_{\ell,r}(x)+e-M_{\ell,r}^{-1} \langle e,\ones \rangle_{\ell+1,r} \ones_{[\ell+1;r]} \bigr\|^2_{\ell+1,r} \\
 &= \|\psi_{\ell,r} (x)\|_{\ell+1,r}^2 + 2 \langle \psi_{\ell,r} (x),e \rangle_{\ell+1,r} + m_j -M_{\ell,r}^{-1}m_j^2,
\end{align*}
using the orthogonality~\eqref{eq:orthogonality} of $\psi_{\ell,r}(x)$ and~$\ones_{[\ell+1;r]}$.
With~$\ee$ being the random increment vector
at~$x\in\bbX_N$, it follows from the above that
\begin{equation*}
     \Psi (x+\ee)^2 -  \Psi (x)^2 
  = 2   \langle \psi_{\ell,r} (x), \ee \rangle_{\ell+1,r} + T_{\ell,r} (x, \ee),
\end{equation*}
where $| T_{\ell,r} (x,e) |$ is uniformly bounded for all $x \in \bbX_N$ and all unit vectors~$e$.
Moreover, $| \Psi (x + e) - \Psi (x) |$ is also  uniformly bounded for all $x \in \bbX_N$ and all unit vectors~$e$. It follows that there is a constant $B < \infty$ such that, a.s.,
\begin{align*}
 \Psi (x + \ee) - \Psi (x) & = \frac{\Psi (x+\ee)^2 - \Psi (x)^2}{\Psi (x+\ee)+\Psi(x)}  \leq  \frac{ \langle \psi_{\ell,r} (x), \ee \rangle_{\ell+1,r}}{\Psi(x)} + \frac{B}{\Psi(x)} ,
\end{align*}
for all $x \in \bbX_N$. In particular, for all $x \in \bbX_N$, 
\begin{equation}
    \label{eq:unsquare-norm}
  \IE_x \left[  \Psi (x+\ee) -  \Psi (x) \right] 
  \leq \frac{1}{\Psi(x)} \left[ \IE_x  \langle \psi_{\ell,r} (x), \ee \rangle_{\ell+1,r} + B \right]. 
\end{equation}
The remaining part of the proof is to obtain a (sufficiently negative) upper bound for the right-hand
expectation in~\eqref{eq:unsquare-norm}.
 To this end, define
\begin{equation}
    \label{eq:V-def}
    V_k := U_{\ell,k} - M_{\ell,k} \frac{U_{\ell,r}}{M_{\ell,r}}, \text{ for } k \in [\ell;r] .
\end{equation}
Then $V_\ell = V_r = 0$, and 
the condition~\eqref{eq:local-stability}
implies that $V_k <0$ for all $k \in [\ell+1 ; r-1]$ (and $U_{\ell,r} > 0$).
Hence, under the hypothesis~\eqref{eq:local-stability},
\begin{equation}
    \label{eq:eps-def}
    \eps := - \max_{k \in [\ell+1; r-1]} V_k 
\end{equation}
satisfies $\eps >0$.
Moreover, since $U_{k} - U_{k-1} = m_k u_k$,
a straightforward calculation implies that
\begin{equation}
    \label{eq:V-diff}
 V_{k-1}- V_k
 = m_k \left[ u_k + \frac{U_{\ell,r}}{M_{\ell,r}} \right], \text{ for all } k \in [\ell+1; r].
\end{equation}
We will next apply Lemma~\ref{lem:drift_y} 
with $y = \psi_{\ell,r} (x)$. 
Note first that for $k \in [\ell_-, \ell_+]$, we have $x_k = x_\ell = \min_{i \in[\ell+1;r]} x_i$,
from which it follows that $y_k \leq 0$ for all $k \in [\ell_-;\ell_+]$. Similarly,
$y_k \geq 0$ for all $k \in [r_-,r_+]$. Hence we deduce from~\eqref{eq:drift_y-partial} that $R_x \IE_x \langle \psi_{\ell,r} (x),\ee \rangle_{\ell+1,r} 
 \leq \langle y ,u \rangle_{\ell+1,r}$.
 Since $\sum_{k \in [\ell+1;r]} y_k m_k = 0$, by~\eqref{eq:orthogonality},
 it follows that
 \begin{align*}
R_x \IE_x \langle \psi_{\ell,r} (x),\ee \rangle_{\ell+1,r} 
& \leq \sum_{k \in [\ell+1;r]} y_k m_k \left[ u_k + \frac{U_{\ell,r}}{M_{\ell,r}} \right]
  = \sum_{k \in [\ell+1;r]} y_k \left[ V_{k-1} - V_k \right], 
\end{align*}
by~\eqref{eq:V-diff}. Then, by partial summation and the fact that $V_\ell = V_r =0$,
\begin{align*}
R_x \IE_x \langle \psi_{\ell,r} (x),\ee \rangle_{\ell+1,r} 
& \leq \sum_{k \in [\ell +1 ; r-1]} V_k ( y_{k+1} - y_k )
 \leq -  \eps \sum_{k \in [\ell +1; r-1]} ( y_{k+1} - y_k ), 
\end{align*}
using~\eqref{eq:eps-def} and the fact that $y_{k+1} - y_k = x_{k+1} - x_k \geq 0$ for $k \in [\ell +1; r-1]$. Hence
\begin{align*}
R_x \IE_x \langle \psi_{\ell,r} (x),\ee \rangle_{\ell+1,r} 
 & \leq -  \eps ( y_r - y_{\ell+1} ) = - \eps ( x_r - x_{\ell+1} ) .
\end{align*}
Using the inequality~\eqref{eq:Psi-upper-bound} we conclude that there exists $\eps >0$ 
 such that
\begin{equation}
\label{eq:square-norm}
  \IE_x \langle \psi_{\ell,r} (x),\ee \rangle_{[\ell+1;r]} 
  \leq -  \eps \Psi (x)  , \text{ for all } x \in \bbX_N.
\end{equation}
Combining~\eqref{eq:unsquare-norm} and~\eqref{eq:square-norm}, we see that, for some $R \in \RP$, 
\[ \IE_x ( \Psi (X(\tau) ) - \Psi (X(0) ) ) \leq - \eps, \text{ for all } x \in \bbX_{N} \text{ with } \| D(x) \| > R .\]
Since $\Psi$ is translation invariant, from here and~\eqref{eq:F-def} we deduce~\eqref{eq:foster}.
\end{proof}

To prove Proposition~\ref{prop:partial-stability}, we will show that  the Lyapunov function~$F$
from Lemma~\ref{lem:lyapunov}
satisfies a  version of a geometric drift condition (see e.g.~Definition 14.1.5 of~\cite{douc}).
The theory of \emph{geometric ergodicity}  for Markov chains (c.f.~Theorem 15.1.5 of~\cite{douc}) does  not apply directly for us,
as through~$F$ we are working with a \emph{projection} of a Markov process,
considering the relative stability of some (typically, not all) coordinates. Nevertheless,
we need less than the full power of geometric ergodicity;  the  Lyapunov function technique relies on martingale rather than Markov structure, and we can obtain
what we need from robust results for adapted processes from Hajek~\cite{hajek}. Since those results are formulated in discrete time, it is convenient to work with the jump chain.

\begin{proof}[Proof of Proposition~\ref{prop:partial-stability}]
Consider the discrete-time jump chain associated with the Markov chain~$\eta$ on $\ZP^{N-1}$,
i.e., the process $\eta (\tau_k)$ where $\tau_k$ is defined at~\eqref{eq:tau-sequence}.
As explained below~\eqref{eq:tau-sequence},
the holding times~$\tau_{k+1} - \tau_k$ are exponentially distributed
with uniformly bounded rates, and so discrete-time results will be easily transferred  to the continuous-time process.

Define $V : \ZP^{N-1} \to (0,\infty)$ by $V (z) := \exp ( \delta F(z) )$,
where $F$ is defined by~\eqref{eq:F-def}. 
Then 
\[ V ( \eta (\tau) ) 
= V (\eta (0) ) \cdot \exp (  \delta (F (\eta(\tau) ) - F(\eta(0))) ) .
\]
Given $B \in \RP$ and $\eps >0$, we can find $\delta \in (0,\infty)$ small enough such that
\[
    \re^{\delta y} \leq 1 + \delta \left( y + \frac{\eps}{2} \right), \text{ for all } y \in [-B,B].
\]
Hence from~\eqref{eq:F-bound} and~\eqref{eq:foster}, we have, for all $x \in \bbX_N$, 
\begin{align*}
    \IE_x  V ( \eta (\tau) ) & \leq V (D(x)) \left[ 1 + \delta \left( \IE_x [
    F (\eta(\tau) ) - F(D(x)) ] + \frac{\eps}{2} \right) \right] \\
    & \leq V(D(x)) \left( 1 - \frac{\delta \eps}{2} \right), \text{ for all } \| D(x) \| > B.
\end{align*}
Now Theorem~2.3 of~\cite{hajek} shows that there exist constants $C \in \RP$ and $\delta >0$ such that 
\begin{equation}
    \label{eq:hajek-bound}
    \IP_x [ F (\eta (\tau_k) ) \geq r   ]  \leq  \left[ C + \re^{\delta (F(D(x))- k )} \right]  \re^{-\delta r} , \text{ for all } r \in \RP \text{ and all } k \in \ZP.
\end{equation}
Consequently, for $r \in \RP$, 
\begin{align*}
    \sup_{t \in \RP} \Pr_x [ F ( \eta(t) ) \geq r ] & \leq \sup_{t \in \RP}
    \Pr_x \left[ \bigcup_{k \in \ZP} \left\{ t \in [ \tau_k, \tau_{k+1} ) \right\} \cap \{ F ( \eta( \tau_k ) ) \geq r \} \right] \\
    & \leq \sum_{k \in \ZP} \Pr_x [ F ( \eta (\tau_k) ) \geq r ]  \\
    & \leq   C \left[1 + \re^{\delta  F(D(x)) } \right]  \re^{-\delta r}, \text{ for all } r \in \RP, 
\end{align*}
by~\eqref{eq:hajek-bound}. 
A consequence of the inequalities~\eqref{eq:Psi-upper-bound}--\eqref{eq:Psi-lower-bound},
and the fact that $\Delta_{[\ell+1;r]} (t) = X_{r} (t) - X_{\ell+1} (t)$ from~\eqref{eq:radius-def},
and $F (\eta(t)) = \Psi (X (t))$ by~\eqref{eq:F-def}, 
is that there exists $a>0$ such that, a.s.,  $ a F( \eta(t))  \leq \Delta_{[\ell+1;r]} (t) \leq a^{-1} F( \eta(t))$ for all $t \in \RP$, and hence we obtain~\eqref{eq:exponential-bound}. 

Finally, we deduce~\eqref{eq:log-bound}. Let $N_t$ denote the number of times that Markov chain $\eta$ jumps during time interval $[t,t+1]$. Since the jump rate is uniformly bounded, $\Pr [ N_t \geq z ] \leq \re^{-\delta z}$ for some $\delta>0$. Combined with~\eqref{eq:exponential-bound} and the fact that $| \Delta (\tau) - \Delta (0) | \leq B$, a.s., for some $B \in \RP$, this shows that, for a given $\Delta (0)$, we can find $A \in \RP$ large enough so that
\begin{align*}
    \Pr \left[ \sup_{s \in [t,t+1]} \Delta(s) \geq 2 A \log t \right] &
    \leq \Pr [ N_t \geq (A/B) \log t ] + \Pr [ \Delta (t) \geq A \log t ] = O (t^{-2} ), \end{align*}
    say.  The Borel--Cantelli lemma then shows that a.s.,
    $\sup_{t \leq s \leq t+1} \Delta (s) \leq 2 A \log t$  for all but finitely many $t \in \ZP$, which implies~\eqref{eq:log-bound}.
\end{proof}

\section{Proofs: Instability and cloud decomposition}
\label{sec:proofs-instability}

\subsection{Overview}
\label{sec:proofs-instability-overview}

The proof of the cloud decomposition, Theorem~\ref{thm:cloud_decomposition}, is divided into several parts. In the putative cloud decomposition $\fC_1, \ldots, \fC_\nu$, each cloud $\fC_j$ is locally stable (by Proposition~\ref{prop:partial-stability}) and would, were it to be an isolated system with none of the other clouds present, possess the intrinsic speed $-v_j$, with the order $v_1 \geq v_2 \geq \cdots \geq v_\nu$ as at~\eqref{eq:slopes}. The proof of  Theorem~\ref{thm:cloud_decomposition} requires us to establish (i) that these intrinsic cloud speeds are indeed replicated in the full system, and (ii) that clouds are typically well-separated. The structure of the argument to do this is as follows.
\begin{itemize}
    \item We first establish the result for the system in which $v_1 = v_2 = \cdots = v_\nu$. This is the most substantial part of the proof, and requires several steps in itself. In~\S\ref{sec:extremes_separate} we show that the centres of mass of the leftmost and rightmost clouds separate at least diffusively (Proposition~\ref{prop:extreme-clouds}), using a submartingale defocusing result from~\S\ref{sec:submartingale}. 
    \item Then in~\S\ref{sec:multiple_same_speed} we use a sort of induction to show \emph{all} adjacent clouds typically separate, in a quantified sense. Roughly speaking, if there are $\nu \geq 3$ clouds, then since the two extreme clouds spend most of their time far apart, and clouds are tight, there must be another pair of clouds that also spends much time far apart, and so all but a vanishing fraction of the time the system evolves as two independent subsystems, both strictly smaller than the original. Iterating a formal version of this argument gives the result that all clouds typically separate in a quantified sense (Proposition~\ref{prop:cloud-separation-diffusive}).
    \item Having established  separation of adjacent clouds, it is not   hard to show that no cloud's speed can be significantly perturbed from its intrinsic speed, and hence (Lemma~\ref{lem:same-speeds-speeds}) all clouds do indeed travel at the same speed.
    \item Having settled the case of systems where all clouds have the same speed, the general case follows relatively easily by decomposing the system into subsystems consisting of same-speed clouds. Subsystems then have strictly ordered intrinsic speeds which they would maintain as isolated systems (by Lemma~\ref{lem:same-speeds-speeds}) and a coupling argument finishes the proof of the cloud decomposition, particle speeds, and diffusive separation.
    \item The recurrence of adjacent same-speed clouds in~\eqref{eq:recurrence} is established by a separate supermartingale argument via Lemma~\ref{lem:supermartingale}.
\end{itemize}

\subsection{Same-speed clouds: Centre-of-mass dynamics}
\label{sec:extremes_separate}

We start the scheme outlined in~\S\ref{sec:proofs-instability-overview}. 
For $\nu \geq 2$ and $n \in \ZP$, 
define
\begin{equation}
    \label{eq:L-extreme-def}
L (n) :=  \min_{i \in \fC_\nu} X_{i} (\tau_n ) - \max_{i' \in \fC_1} X_{i'} (\tau_n )
= X_{\min \fC_\nu} (\tau_n) - X_{\max \fC_1} (\tau_n), 
\end{equation}
the distance between the leftmost particle in the rightmost cloud and the rightmost particle in the leftmost cloud at time $\tau_n$ defined at~\eqref{eq:tau-sequence}.
The first result of this section states that, in the case where $v_1 = \cdots = v_\nu$,
$L(n)$ grows at least diffusively, meaning that the two extreme clouds are well-separated for all but a vanishing proportion of time
(the case of two particles of the same intrinsic speeds shows that this result can be essentially sharp).

\begin{proposition}
    \label{prop:extreme-clouds}
           Suppose that~\eqref{eq:non-degeneracy} holds, that $\nu \geq 2$, and that
        $v_1 = \cdots = v_{\nu}$. 
 Then for every $\gamma \in (0,1/2)$, there exists $\eps_0 :=\eps_0 (\gamma) >0$ such that, for all $\eps \in (0,\eps_0)$ and all $n$ sufficiently large,  
\[ \Pr \left[ \sum_{i=0}^n \1 { L(i) \leq n^\gamma  } > n^{\frac{1}{2}+\gamma+5\eps} \right] \leq \re^{-n^\eps} .\]
\end{proposition}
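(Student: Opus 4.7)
The plan is to reduce to a non-negative submartingale governed by the centres of mass of the two extreme clouds, and then apply the submartingale defocusing estimate developed in Appendix~\ref{sec:submartingale}.

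First, define $G_j(t) := M_{\fC_j}^{-1} \sum_{i \in \fC_j} m_i X_i(t)$ for each $j \in [\nu]$, and set, along the jump chain,
\[ H(n) := G_\nu(\tau_n) - G_1(\tau_n) , \quad n \in \ZP . \]
The weak order $X_1 \leq \cdots \leq X_N$, together with $G_\nu \geq X_{\min \fC_\nu}$ and $G_1 \leq X_{\max \fC_1}$, gives the two-sided comparison
\[ L(n) \leq H(n) \leq L(n) + \Delta_{\fC_1}(\tau_n) + \Delta_{\fC_\nu}(\tau_n) . \]
The local-stability hypothesis~\eqref{eq:local-stability} of Proposition~\ref{prop:partial-stability} holds within each $\fC_j$ thanks to~\eqref{eq:boundary-2}, so each of $\Delta_{\fC_1}, \Delta_{\fC_\nu}$ has an exponential tail uniform in $t$. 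A union bound over the $n+1$ jump times gives $\max_{i \leq n}(\Delta_{\fC_1}(\tau_i) + \Delta_{\fC_\nu}(\tau_i)) \leq n^\gamma$ outside an event of probability at most $\re^{-n^\eps}$. On this good event $\{L(i) \leq n^\gamma\} \subseteq \{H(i) \leq 2n^\gamma\}$, so it suffices to bound the occupation time of $H$ in $[0, 2n^\gamma]$.

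The key structural claim is that $H$ is a non-negative submartingale with uniformly bounded increments. To verify this I apply Lemma~\ref{lem:drift_y} with $y = \ones$ separately on the ranges $[1; k_1]$ and $[k_{\nu-1}+1; N]$ to compute the jump-chain drifts of $M_{\fC_1} G_1$ and $M_{\fC_\nu} G_\nu$. Under the equal-speed hypothesis the ratios $U_{k_j}/M_{k_j}$ are constant in $j$ (all equal to $U_N/M_N$), so the bulk drift terms in~\eqref{eq:drift_y-partial} deliver the common speed $-v := -U_N/M_N$ to both $G_1$ and $G_\nu$ whenever the right boundary of $\fC_1$ and the left boundary of $\fC_\nu$ are genuine gaps, i.e.\ $X_{k_1} < X_{k_1+1}$ and $X_{k_{\nu-1}} < X_{k_{\nu-1}+1}$. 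The residual boundary correction terms in~\eqref{eq:drift_y-partial} always carry a definite sign consistent with a submartingale: when $X_{k_1} = X_{k_1+1}$, the right-boundary correction subtracts from the drift of $M_{\fC_1} G_1$ a non-negative combination of rates of particles from $\fC_2$ (and possibly beyond, if in the same stack), pulling $G_1$ to drift at rate $\leq -v$; and, symmetrically, when $X_{k_{\nu-1}} = X_{k_{\nu-1}+1}$, the left-boundary correction forces $G_\nu$ to drift at rate $\geq -v$. In every configuration, then, $\IE[H(n+1)-H(n) \mid \cF_{\tau_n}] \geq 0$, and the increments are uniformly bounded in absolute value since only a single particle moves by one unit per jump.

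To close the argument I need a complementary lower bound on the conditional variance of the increments of $H$. The non-degeneracy hypothesis~\eqref{eq:non-degeneracy} provides particles $i_1 \in \fC_1$ and $i_\nu \in \fC_\nu$ with $a_{i_*} + b_{i_*} > 0$; each jump of $i_1$ or $i_\nu$ changes $H$ by a fixed non-zero amount $\pm m_{i_1}/M_{\fC_1}$ or $\pm m_{i_\nu}/M_{\fC_\nu}$. Such an active particle may however be temporarily blocked from jumping when it sits in a non-extremal position of a stack. Using the uniform positive lower bound on jump rates~\eqref{eq:lower-bound} and a short-range excursion argument, one obtains that in any block of $K = O(1)$ consecutive jumps the conditional probability of at least one jump by $i_1$ or $i_\nu$ is bounded below, yielding a uniform lower bound $\sigma^2 > 0$ on the $K$-step conditional variance of $H$. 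Feeding this input into the submartingale defocusing result of~\S\ref{sec:submartingale}, applied on horizon $n$, produces, for any $\gamma \in (0, 1/2)$, some $\eps_0 > 0$ such that for every $\eps \in (0, \eps_0)$ and all $n$ sufficiently large,
\[ \Pr \Bigl[ \sum_{i=0}^n \1 { H(i) \leq 2 n^\gamma } > n^{\frac{1}{2}+\gamma+4\eps} \Bigr] \leq \re^{-n^\eps} . \]
Combined with the reduction of the first paragraph and absorbing the probability $\re^{-n^\eps}$ of the cloud-span good event, this gives the claim (with $5\eps$ in place of $4\eps$). The principal technical obstacle is precisely this variance lower bound: individual jumps of $H$ can be zero when the non-degenerate particles of the extreme clouds are trapped in interior stack positions, so the variance bound holds only on a coarsened time-scale and the defocusing machinery of Appendix~\ref{sec:submartingale} must be tailored to accommodate this averaging.
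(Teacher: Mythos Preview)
Your approach---define $\Gamma(n)=G_\nu(\tau_n)-G_1(\tau_n)$, show it is a non-negative submartingale with bounded increments, apply Lemma~\ref{lem:submartingale-occupation}, and then pass from $\Gamma$ to $L$ via the cloud-span bound (Lemma~\ref{lem:max-cloud-deviation})---is exactly the paper's proof. The only divergence is your treatment of the variance lower bound, which you flag as the ``principal technical obstacle'' requiring a $K$-step coarsening and a tailored defocusing lemma. In fact no such complication arises: the paper (Lemma~\ref{lem:com-submartingale}) shows that the \emph{one-step} conditional second moment of $\Gamma$ is uniformly bounded away from zero.

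The reason is as follows. The elastic rule transfers the rates of a stacked particle to the stack's extremal particles; since $\fC_1=[1;k_1]$ carries the smallest labels, the bottom particle of any stack meeting $\fC_1$ always lies in $\fC_1$. Hence any $a_i>0$ with $i\in\fC_1$ guarantees, in \emph{every} configuration, positive rate for some $\fC_1$-particle to move (so $G_1$ changes). Symmetrically, any $b_i>0$ with $i\in\fC_\nu$ guarantees that $G_\nu$ changes with positive rate. The only way both could fail is $\sum_{i\in\fC_1}a_i=0$ and $\sum_{i\in\fC_\nu}b_i=0$, which forces $v_1\le 0\le v_\nu$; under the equal-speed hypothesis $v_1=v_\nu$, this gives $v_1=v_\nu=0$, whence $\sum_{i\in\fC_1}m_ib_i=0$ and $\sum_{i\in\fC_\nu}m_ia_i=0$, so all rates in $\fC_1$ and in $\fC_\nu$ vanish---contradicting~\eqref{eq:non-degeneracy}. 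Thus the one-step variance bound~\eqref{eq:variance-lower} holds directly, and Lemma~\ref{lem:submartingale-occupation} applies without modification.
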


The basic element in the proof of Proposition~\ref{prop:extreme-clouds}
is to show that the distance between the extreme clouds (precisely, their centres of mass) is a submartingale. This result, Lemma~\ref{lem:com-submartingale} below, does not need to assume $v_1 = v_\nu$. The intuition for this result is that the two extreme clouds have their own intrinsic
speeds, $v_1$ (for the leftmost) and $v_{\nu}$ (for the rightmost), and, as isolated systems, these intrinsic speeds would be exactly the speeds of the centres of mass of the clouds (cf.~Theorem~\ref{thm:mass-speed}). In the full system, the presence of other particles in between these two clouds should make it harder for them to move towards each other. 

To state the result, let $\fC = [\ell; r]$ for $1 \leq \ell < r \leq N$. Then for $x \in \bbX_N$, define
\begin{equation}
    \label{eq:G-j-def}
    g_{\fC} (x) := \langle \ones, x \rangle_{\ell,r} /  \langle \ones, \ones \rangle_{\ell,r},
    \text{ and, for $t \in \RP$, }
    G_j (t) := g_{\fC_j} ( X(t)) .
\end{equation}
Also  set 
\begin{equation}
    \label{eq:D-n-def}
    \Gamma(n) :=  G_\nu (\tau_n) - G_1 (\tau_n) = g_{[k_{\nu-1}+1;k_\nu]} (X(\tau_n)) - g_{[1;k_1]} ( X(\tau_n)) .
    \end{equation}

\begin{lemma}
\label{lem:com-submartingale}
Suppose that 
$\nu \geq 2$, and recall from~\eqref{eq:slopes} that $v_1 \geq v_\nu$. 
Then, for every $n \in \ZP$, 
\begin{equation}
    \label{eq:com-drift-lower}
 \Exp [ \Gamma(n+1)  - \Gamma(n) \mid \cF_{\tau_n} ] \geq  ( v_1 - v_{\nu}  )/R_{X(\tau_n)} , \as \end{equation}
Suppose, additionally, that~\eqref{eq:non-degeneracy} holds. Then there exists $\delta>0$ such that
\begin{equation}
    \label{eq:com-variance}
 \Exp [ ( \Gamma(n+1)  - \Gamma(n) )^2 \mid \cF_{\tau_n} ] \geq \delta, \as \end{equation}
    \end{lemma}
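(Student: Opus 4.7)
The plan is to handle the two assertions separately, using Lemma~\ref{lem:drift_y} for the drift inequality~\eqref{eq:com-drift-lower} and a direct analysis of the jump rates for the variance bound~\eqref{eq:com-variance}. For the drift inequality, I will write $\Delta G_j := g_{\fC_j}(x+\ee) - g_{\fC_j}(x)$ for the jump-chain increment and apply Lemma~\ref{lem:drift_y} twice. For $\fC_\nu = [k_{\nu-1}+1; N]$, inequality~\eqref{eq:drift-y-right} applied with $y := \ones/M_{k_{\nu-1},N} \geq 0$ and $\ell = k_{\nu-1}+1$ gives $\IE_x[\Delta G_\nu] \geq R_x^{-1}\langle y, u\rangle_{k_{\nu-1}+1,N} = -v_\nu/R_x$, where the last equality uses $\sum_{i \in \fC_\nu} m_i u_i = -U_{k_{\nu-1},N}$ from~\eqref{eq:U-M-diff-def} together with $v_\nu = U_{k_{\nu-1},N}/M_{k_{\nu-1},N}$ from~\eqref{eq:slopes}. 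Symmetrically, for $\fC_1 = [1;k_1]$, applying~\eqref{eq:drift-y-left} with $y := \ones/M_{k_1}$ and $r = k_1$ gives $\IE_x[\Delta G_1] \leq -v_1/R_x$. Subtracting these two inequalities yields~\eqref{eq:com-drift-lower}.

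For the variance bound, the key structural observation is that each jump moves a single particle $k$, so $\Delta G_j$ is nonzero only when $k \in \fC_j$; as the clouds are disjoint, $\Delta G_1$ and $\Delta G_\nu$ are never simultaneously nonzero, and thus $(\Delta \Gamma)^2 = (\Delta G_1)^2 + (\Delta G_\nu)^2$. When nonzero, $|\Delta G_j| = m_k/M_{k_{j-1},k_j}$ is bounded below by a positive constant depending only on the masses. Since $R_x$ is uniformly bounded above, it therefore suffices to show a uniform lower bound on the total rate $\rho(x) := \sum_{i \in \fC_1 \cup \fC_\nu}(A_i(x) + B_i(x))$. To this end, I exploit that $\fC_1$ consists of the smallest labels and $\fC_\nu$ of the largest: any stack containing a $\fC_1$-particle has its leftmost label in $\fC_1$, so for each $i \in \fC_1$, the intrinsic rate $a_i$ contributes at least $c a_i$ to $\rho(x)$ via the left-jump rate of $i$'s stack (which moves a $\fC_1$-particle), where $c > 0$ depends only on the masses. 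Symmetrically, each $b_i$ for $i \in \fC_\nu$ contributes at least $c b_i$ via the right-jump rate of $i$'s stack, yielding $\rho(x) \geq c \bigl( \sum_{i \in \fC_1} a_i + \sum_{i \in \fC_\nu} b_i \bigr)$ uniformly in $x \in \bbX_N$.

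The main obstacle is to verify that the last sum is strictly positive under hypothesis~\eqref{eq:non-degeneracy}, which \emph{a priori} only guarantees nonzero activity in each cloud, not of any particular sign. This is where the ordering $v_1 \geq v_\nu$ (implied by~\eqref{eq:boundary-1}) plays a decisive role. If instead both $\sum_{i \in \fC_1} a_i = 0$ and $\sum_{i \in \fC_\nu} b_i = 0$, then~\eqref{eq:non-degeneracy} forces $\sum_{i \in \fC_1} b_i > 0$ and $\sum_{i \in \fC_\nu} a_i > 0$, whence $v_1 = -M_{k_1}^{-1}\sum_{i \in \fC_1} m_i b_i < 0 < M_{k_{\nu-1},N}^{-1}\sum_{i \in \fC_\nu} m_i a_i = v_\nu$, contradicting $v_1 \geq v_\nu$. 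This rules out the degenerate case and completes the plan.
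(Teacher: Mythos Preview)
Your proof is correct. The drift argument for~\eqref{eq:com-drift-lower} coincides with the paper's: both apply~\eqref{eq:drift-y-right} to $\fC_\nu$ and~\eqref{eq:drift-y-left} to $\fC_1$ with $y$ a multiple of~$\ones$, then subtract.

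For the variance bound~\eqref{eq:com-variance} your route differs from the paper's, and is in fact more robust. The paper bounds the second moment of the $G_1$-increment alone from below by $R_x^{-1} M_{0,k_1}^{-1} \sum_{i \in \fC_1}(a_i+b_i)$ and appeals to~\eqref{eq:non-degeneracy} for $\fC_1$. That inequality is not valid in every configuration: if all $a_j$ vanish for $j$ in the stack of particle~$1$, and $x_{k_1}=x_{k_1+1}$, then no $\fC_1$-particle can move at all, while $\sum_{i\in\fC_1}(a_i+b_i)$ may still be positive (e.g.\ $N=3$, masses~$1$, singleton clouds with $a_i\equiv 0$ and $b_i=i$, at $x_1=x_2<x_3$). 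Your argument avoids this by lower-bounding $\rho(x)$ only in terms of $\sum_{i\in\fC_1}a_i+\sum_{i\in\fC_\nu}b_i$, where the structural facts that the minimal label in any stack meeting $\fC_1$ lies in~$\fC_1$ (and symmetrically for~$\fC_\nu$) genuinely give a uniform bound, and then invoking $v_1\geq v_\nu$ together with~\eqref{eq:non-degeneracy} to exclude the degenerate case. That extra use of the slope ordering is precisely what is needed to close the gap.
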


\begin{remark}[Non-monotonicity]
\label{rem:monotone-1}
It does not appear straightforward to use the intuition for Lemma~\ref{lem:com-submartingale} expressed above directly, via  a stochastic monotonicity argument, for example. The exception is in the case where all $m_i \equiv m \in (0,\infty)$ are equal, in which case the natural coupling shows that the presence of an additional particle to the right (say) of a system cannot increase the speed of the centre of mass of the system. For systems with different masses, the non-constant total activity rate  (i.e.~violation of~\eqref{eq:time-change}) means the natural coupling does not work.
\end{remark}

\begin{proof}[Proof of Lemma~\ref{lem:com-submartingale}]
Recall from Definition~\ref{def:majorant} 
and~\eqref{set_of_boundary_indices} that
that $\fC_1 = \{ k_0+1, \ldots,  k_1 \}$  and $\fC_\nu = \{ k_{\nu-1} +1, \ldots, k_{\nu} \}$, with $k_0 = 0$ and $k_{\nu} = N$,
are the leftmost and rightmost clouds, respectively.
    By the definition of~$G_{\ell,r}$ from~\eqref{eq:G-j-def} and Lemma~\ref{lem:drift_y}, 
     with $\ee$ as defined at~\eqref{eq:e-def},
    \begin{align*}
    \Exp [ G_\nu ( \tau_{n+1}  ) - G_\nu (\tau_n )   \mid \cF_{\tau_n} ]
    & = M_{k_{\nu-1},N}^{-1} \Exp_{X(\tau_n)} \langle \ones, \ee \rangle_{k_{\nu-1}+1,N}  \geq - \frac{U_{k_{\nu-1},N}}{M_{k_{\nu-1},N} R_{X(\tau_n)}} , \as \end{align*}
    Similarly,
   \[ \Exp [ G_1 (\tau_{n+1} )  - G_1 (\tau_n) \mid \cF_{\tau_n} ]
    = M_{0,k_1}^{-1} \Exp_{X(\tau_n)} \langle \ones, e \rangle_{1,k_1}
    \leq - \frac{U_{0,k_1}}{M_{0,k_1} R_{X(\tau_n)}} , \as \]
It follows from~\eqref{eq:D-n-def} that (recall that $k_0 = 0$ and $k_\nu = N$)
\[\Exp [ \Gamma(n+1) - \Gamma(n) \mid \cF_{\tau_n} ] 
\geq \frac{1}{R_{X(\tau_n)}} \left( \frac{U_{k_0,k_1}}{M_{k_0,k_1}} - \frac{U_{k_{\nu-1},k_{\nu}}}{M_{k_{\nu-1},k_{\nu}}} \right) = \frac{v_1-v_\nu}{R_{X(\tau_n)}}, \as ,\]
    by~\eqref{eq:slopes}, verifying~\eqref{eq:com-drift-lower}. Since $k_1 < k_{\nu-1} +1$, at most one of 
    $G_\nu (\tau_{n+1} ) - G_\nu (\tau_{n}  ) $
    and $G_1 (\tau_{n+1}  ) - G_1 (\tau_{n} ) $ can be non-zero. Hence
    \begin{align*}
         \Exp [ ( \Gamma(n+1)  - \Gamma(n) )^2 \mid \cF_{\tau_n} ] & = 
         \Exp [ ( G_1 (\tau_{n+1} )  - G_1 (\tau_{n}  ) )^2 \mid \cF_{\tau_n} ]  \\
         & {} \quad {} 
         + \Exp [ ( G_\nu (\tau_{n+1} )  - G_\nu ( X(\tau_{n} ) ))^2  \mid \cF_{\tau_n}].
    \end{align*}
The quantity $G_1 (\tau_{n+1} ) - G_1 (\tau_n )$
is non-zero if and only if one of the particles with labels in $\fC_1 = [1;k_1]$ jumps,
and then, by~\eqref{eq:jump-chain},
     \begin{align*}
    \Exp [ ( G_1 (\tau_{n+1}  ) - G_1 (\tau_n ) ) ^2 \mid \cF_{\tau_n} ]
    & = M_{0,k_1}^{-1} \Exp_{X(\tau_n)} [ \langle \ones, \ee \rangle_{1,k_1}^2 ] \\
    & \geq R_{X(\tau_n)}^{-1} M_{0,k_1}^{-1} \sum_{i \in \fC_1} (a_i+b_i) ,
    \end{align*}
    which, under hypothesis~\eqref{eq:non-degeneracy}, is uniformly positive. This implies~\eqref{eq:com-variance}.
    \end{proof}

Recall from~\eqref{eq:G-j-def} that $G_j (t)$
is the centre of mass of cloud $\fC_j$ at time $t \in \RP$.
Define
\begin{equation}
    \label{eq:D-j-def}
   \Gamma_j (n) := G_{j+1} (\tau_n) - G_j (\tau_n), \text{ for } j \in [\nu -1] \text{ and } n \in \ZP, 
\end{equation}
the distance between the centres of mass of clouds~$j$ and $j+1$ at time $\tau_n$.

\begin{lemma}
\label{lem:supermartingale}
Suppose that $\nu \geq 2$, and that $j \in [\nu-1]$ is such that $v_j = v_{j+1}$, with the notation from~\eqref{eq:slopes}.
Then, for every $n \in \ZP$, 
\begin{equation}
    \label{eq:com-drift-supermart}
 \Exp [ \Gamma_j (n+1)  - \Gamma_j (n) \mid \cF_{\tau_n} ] \leq 0 , \text{ on } \{ \Gamma_j (n) > 0 \}. \end{equation}
\end{lemma}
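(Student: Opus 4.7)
Write $x = X(\tau_n)$ and plan to compute $\Exp_x[\Gamma_j(n+1) - \Gamma_j(n)]$ directly using Lemma~\ref{lem:drift_y}, following the template used in the proof of Lemma~\ref{lem:com-submartingale} but now for a pair of \emph{interior} clouds. Apply the partial drift identity~\eqref{eq:drift_y-partial} with the test vector $y = \ones$ twice: once with $(\ell,r) = (k_{j-1}+1, k_j)$ to compute $R_x M_{k_{j-1},k_j}\,\Exp_x[\Delta G_j]$, and once with $(\ell,r) = (k_j+1, k_{j+1})$ to compute $R_x M_{k_j,k_{j+1}}\,\Exp_x[\Delta G_{j+1}]$. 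In each case the bulk contribution $\langle\ones,u\rangle_{\ell,r}$, after dividing by the corresponding $M_{\cdot,\cdot}$, equals $-v_j$ or $-v_{j+1}$ by~\eqref{eq:U-M-diff-def} and~\eqref{eq:slopes}. The hypothesis $v_j = v_{j+1}$ is invoked exactly here: when subtracting to form $\Exp_x[\Delta\Gamma_j] = \Exp_x[\Delta G_{j+1}] - \Exp_x[\Delta G_j]$, the two bulk-speed contributions cancel.

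The remainder is a sum of boundary corrections from~\eqref{eq:drift_y-partial}, driven by the stack structure of~$x$ at the four sites $x_{k_{j-1}+1}, x_{k_j}, x_{k_j+1}, x_{k_{j+1}}$. Because $y = \ones \geq 0$, left-endpoint corrections are non-negative and right-endpoint corrections are non-positive. I would classify them as \emph{external} (those coming from a stack that couples $\fC_j$ with $\fC_{j-1}$ at the left end of $\fC_j$, or $\fC_{j+1}$ with $\fC_{j+2}$ at the right end of $\fC_{j+1}$) and \emph{internal} (those coming from a single stack that straddles the $\fC_j$--$\fC_{j+1}$ interface, which can happen only when $x_{k_j} = x_{k_j+1}$). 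The external contributions enter $R_x\Exp_x[\Delta\Gamma_j]$ with a sign favourable to the supermartingale bound, and after dividing through by the positive constants $R_x$, $M_{k_{j-1},k_j}$ and $M_{k_j,k_{j+1}}$ they contribute non-positively, so they can be absorbed without difficulty.

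The main obstacle is the internal contribution: a single straddling stack produces corrections in both $\Exp_x[\Delta G_j]$ (through its right endpoint) and $\Exp_x[\Delta G_{j+1}]$ (through its left endpoint), and each of these naively has the ``wrong'' sign, pushing $\Gamma_j$ upward. The plan is to rewrite the combined internal contribution as a single weighted sum over the labels in the straddling stack, with mass-weighted coefficients involving $M_{k_{j-1},k_j}^{-1}$ and $M_{k_j,k_{j+1}}^{-1}$, and then to exploit the equivalent algebraic form of $v_j = v_{j+1}$, namely $M_{k_{j-1},k_j}^{-1}U_{k_{j-1},k_j} = M_{k_j,k_{j+1}}^{-1}U_{k_j,k_{j+1}}$, to obtain the required cancellation. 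The event hypothesis $\Gamma_j(n) > 0$ enters via the weak-order chain $G_j \leq X_{k_j} \leq X_{k_j+1} \leq G_{j+1}$, which rules out configurations where the internal correction would otherwise dominate. Once this algebraic cancellation is established, adding the non-positive external contributions from the previous paragraph and dividing through by $R_x > 0$ yields the claimed supermartingale inequality on $\{\Gamma_j(n) > 0\}$.
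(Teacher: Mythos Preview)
Your treatment of the ``internal'' straddling-stack correction is where the plan breaks down. Take the four-particle, equal-mass configuration $x=(0,1,1,2)$ with $\fC_j=\{1,2\}$, $\fC_{j+1}=\{3,4\}$; here $\Gamma_j=G_{j+1}-G_j=1>0$, yet the stack $\{2,3\}$ straddles the interface. A direct computation from the transition rates gives $R_x\,\Exp_x[\Gamma_j(n{+}1)-\Gamma_j(n)]=b_2+a_3$, which is generically strictly positive and involves neither $v_j$ nor $v_{j+1}$: the hypothesis $v_j=v_{j+1}$ cancels only the bulk terms $M^{-1}\langle\ones,u\rangle$, not these boundary corrections. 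The weak-order chain $G_j\leq X_{k_j}\leq X_{k_j+1}\leq G_{j+1}$ does not help either, since it is consistent with $X_{k_j}=X_{k_j+1}$ while $\Gamma_j>0$, exactly as in this example. So neither the proposed algebraic cancellation nor the proposed use of $\{\Gamma_j>0\}$ disposes of the straddling stack.

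The paper's proof is much shorter than your plan because it sidesteps this obstacle entirely: it asserts that on $\{\Gamma_j(n)>0\}$ one has $(k_j)_+=k_j$ and $(k_j{+}1)_-=k_j{+}1$, i.e.\ no straddling stack at the $\fC_j$--$\fC_{j+1}$ interface. With that, the interior boundary terms in~\eqref{eq:drift_y-partial} vanish, and one reads off $R_x\,\Exp_x[\Delta G_{j+1}]\leq -v_{j+1}$, $R_x\,\Exp_x[\Delta G_j]\geq -v_j$, whence~\eqref{eq:com-drift-supermart}. The configuration above shows that this assertion, taken literally, is also delicate---$\Gamma_j>0$ does not force $X_{k_j}<X_{k_j+1}$---so you may wish to check whether the intended conditioning event is $\{L_j(\tau_n)>0\}$ rather than $\{\Gamma_j(n)>0\}$; either version suffices for the downstream recurrence argument, and on $\{L_j(\tau_n)>0\}$ the paper's short computation goes through verbatim.
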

\begin{proof}
    Suppose that $j \in [\nu-1]$ is such that $v_j = v_{j+1}$. Then $G_j, G_{j+1}$ are the centres of mass
    processes for clouds $\fC_j = \{ k_{j-1}+1, \ldots, k_j\}$ and $\fC_{j+1} = \{ k_j+1,k_{j+1}\}$, respectively. 
From~\eqref{eq:G-j-def},  
\[ \Exp [ G_j (X(\tau_{n+1})) - G_j (X(\tau_n)) \mid \cF_n ] = M_{k_{j-1},k_j}^{-1}
\Exp_{X(\tau_n)} \langle \ones , \ee \rangle_{k_{j-1}+1,k_j} 
\]
    On the event $\{ \Gamma_j (n) > 0 \}$, recalling the definitions of $k_\pm = k_\pm (X (\tau_n))$ from~\eqref{eq:k-plus-minus}, we have $(k_j +1)_- =k_{j}+1$ and $(k_{j})_+ = k_j$. Hence, from~\eqref{eq:drift_y-partial} in Lemma~\ref{lem:drift_y}, we have that, on $\{ \Gamma_j (n) > 0 \}$, 
    \begin{align*} 
    R_{X(\tau_n)} \Exp_{X(\tau_n)} \langle \ones , \ee \rangle_{k_{j}+1,k_{j+1}}  \leq U_{k_j,k_{j+1}} , \text{ and } 
    R_{X(\tau_n)} \Exp_{X(\tau_n)} \langle \ones , \ee \rangle_{k_{j-1}+1,k_{j}}  \geq U_{k_{j-1},k_j}
,
\end{align*}
by~\eqref{eq:U-M-diff-def}. 
It follows from~\eqref{eq:D-j-def} that,  on $\{ \Gamma_j (n) > 0 \}$, 
\begin{align*}
     \Exp [ \Gamma_j (n+1)  - \Gamma_j (n) \mid \cF_{\tau_n} ] &
     = R_{X(\tau_n)}^{-1} \bigl( M_{k_{j},k_{j+1}}^{-1}  \langle \ones , \ee \rangle_{k_{j}+1,k_{j+1}}
     -  M_{k_{j-1},k_j}^{-1}  \langle \ones , \ee \rangle_{k_{j-1}+1,k_{j}} \bigr) \\
&      \leq  R_{X(\tau_n)}^{-1} \left( \frac{U_{k_j,k_{j+1}}}{M_{k_{j},k_{j+1}}} -  \frac{U_{k_{j-1},k_j}}{M_{k_{j-1},k_j}} \right) = 0,
\end{align*}
by~\eqref{eq:slopes} and the assumption that $v_j = v_{j+1}$. This proves~\eqref{eq:com-drift-supermart}.
\end{proof}

Recall the definition of $\Delta_\fC(t)$ from~\eqref{eq:radius-def}.
The following straightforward consequence of Proposition~\ref{prop:partial-stability}
will be useful in showing that to control dynamics of clouds it suffices, on large scales, 
to control dynamics of their centres of mass.

\begin{lemma}
\label{lem:max-cloud-deviation}
        Suppose that  $\nu \in \N$ and
        cloud decomposition $\fC_1, \ldots, \fC_\nu$ has speeds $v_1 = \cdots = v_{\nu} \in \R$.  Then, for every $\eps >0$, for all $n$ sufficiently large,
    \[ \Pr \left[ \max_{j \in [\nu]} \max_{0 \leq i \leq n} \Delta_{\fC_j} ( \tau_i) \geq n^{2\eps} \right] \leq \re^{-n^\eps} .\]
    \end{lemma}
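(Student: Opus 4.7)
The plan is a direct application of Proposition~\ref{prop:partial-stability} to each cloud individually, combined with a union bound over the $\nu$ clouds and over the (finitely many) jump indices $0 \leq i \leq n$. First, I would observe that each cloud $\fC_j = \{k_{j-1}+1, \ldots, k_j\}$ satisfies the local stability hypothesis~\eqref{eq:local-stability} with $\ell = k_{j-1}$, $r = k_j$: this is exactly the strict inequality~\eqref{eq:boundary-2} appearing in Definition~\ref{def:majorant}\ref{def:majorant-ii}, which says that the path $(M_k, U_k)$ strictly falls below its concave majorant in the interior of each face. Hence Proposition~\ref{prop:partial-stability} applies cloud-by-cloud, yielding constants $C \in \RP$ and $\delta > 0$ (depending only on the $a_i, b_i, m_i$) such that
\[
\Pr \left[ \Delta_{\fC_j}(\tau_i) \geq n^{2\eps} \right] \leq C' \re^{-\delta n^{2\eps}}, \text{ for every } i \in \ZP \text{ and } j \in [\nu],
\]
where $C' := C [1 + \re^{C \max_{j} \Delta_{\fC_j}(0)}]$ is a fixed constant depending only on the initial configuration.

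Next I would apply a union bound over $j \in [\nu]$ (finite) and over $0 \leq i \leq n$ to get
\[
\Pr\left[ \max_{j \in [\nu]} \max_{0 \leq i \leq n} \Delta_{\fC_j}(\tau_i) \geq n^{2\eps} \right] \leq \nu (n+1) C' \re^{-\delta n^{2\eps}}.
\]
Since $2\eps > \eps$, for $n$ sufficiently large $\delta n^{2\eps}$ dominates $n^\eps + \log(\nu(n+1)C')$, so the right-hand side is at most $\re^{-n^\eps}$, as required.

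I do not anticipate a serious obstacle: the exponential gap between $\re^{-\delta n^{2\eps}}$ and the desired $\re^{-n^\eps}$ leaves ample room to absorb the polynomial factor $n+1$ from the union bound. The only mild technical subtlety is that~\eqref{eq:exponential-bound} is a statement about deterministic times, whereas the $\tau_i$ are random. However, $X(\tau_i)$ is precisely the state of the (discrete-time) jump chain after $i$ steps, whose law is independent of the continuous-time clock; so one can equivalently invoke the Hajek-type inequality~\eqref{eq:hajek-bound} used in the proof of Proposition~\ref{prop:partial-stability}, which is already phrased at discrete jump indices and applies verbatim, with the equivalence $a F(\eta(\tau_i)) \leq \Delta_{\fC_j}(\tau_i) \leq a^{-1} F(\eta(\tau_i))$ (also noted in the proof there) translating the bound on $F$ into the desired bound on $\Delta_{\fC_j}$.
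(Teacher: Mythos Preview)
Your proposal is correct and follows essentially the same route as the paper: the paper's proof simply invokes the Hajek-type bound~\eqref{eq:hajek-bound} from the proof of Proposition~\ref{prop:partial-stability} (applied cloud-by-cloud, exactly as you justify via~\eqref{eq:boundary-2}) to get $\sup_{i \in \ZP} \Pr [ \Delta_{\fC_j}(\tau_i) \geq n^{2\eps} ] \leq \re^{-n^\eps}$ for large $n$, and then finishes with a union bound over $j$ and $i$. You have correctly anticipated and resolved the deterministic-time versus jump-time subtlety in the same way the paper does.
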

\begin{proof}
    The proof of Proposition~\ref{prop:partial-stability} up to~\eqref{eq:hajek-bound}
    shows that, for every $\eps >0$,
    \[ \max_{j \in [\nu]} \sup_{i \in \ZP} \Pr [ \Delta_{\fC_j} ( \tau_i ) \geq n^{2\eps} ] \leq \re^{-n^\eps} , \]
        for all $n$ sufficiently large. The claimed result follows from a union bound.
\end{proof}

\begin{proof}[Proof of Proposition~\ref{prop:extreme-clouds}]
    We apply Lemma~\ref{lem:submartingale-occupation} with $X_n = \Gamma(n)$ as defined at~\eqref{eq:D-n-def}. Then the bounded-increments hypothesis~\eqref{eq:bounded-jumps} is satisfied, and Lemma~\ref{lem:com-submartingale} shows that the submartingale hypothesis~\eqref{eq:submartingale} and uniform lower bound on second moments~\eqref{eq:variance-lower} both hold (here we use hypothesis~\eqref{eq:non-degeneracy}).
    Then  Lemma~\ref{lem:submartingale-occupation} shows that, for any $\gamma \in (0,1/2)$ and $\eps \in (0, \frac{1-2\gamma}{4})$,
    \begin{equation}
        \label{eq:D-deviation}
 \Pr \left[ \sum_{i=0}^n \1 { \Gamma(i) \leq n^\gamma } > n^{\frac{1}{2}+\gamma+4\eps} \right] \leq \re^{-n^\eps} .    \end{equation}
    Since $L (n)$ defined at~\eqref{eq:L-extreme-def} satisfies $| L(n) - \Gamma(n)| \leq \Delta_{\fC_1} (\tau_n) + \Delta_{\fC_\nu} (\tau_n)$,
    we combine Lemma~\ref{lem:max-cloud-deviation} with~\eqref{eq:D-deviation} to get
      \begin{align*}
 \Pr \left[ \sum_{i=0}^n \1 { L(i) \leq n^\gamma - 2 n^{2\eps} } > n^{\frac{1}{2}+\gamma+4\eps} \right] & \leq \Pr \left[ \sum_{i=0}^n \1 { \Gamma(i) \leq n^\gamma } > n^{\frac{1}{2}+\gamma+4\eps} \right] \\
 & {} \quad {}  + \Pr \left[ \max_{0 \leq i \leq n} \left( \Delta_{\fC_1} (\tau_i) + \Delta_{\fC_\nu} (\tau_i) \right) \geq 2 n^{2\eps} \right]   \leq 2 \re^{-n^\eps} , \end{align*}
 which yields the result, provided $\eps < \gamma/2$.
\end{proof}

\subsection{Same-speed clouds: Quantified separation}
\label{sec:multiple_same_speed}

As outlined in~\S\ref{sec:proofs-instability-overview}, the key result in the proof of Theorem~\ref{thm:cloud_decomposition} is the following quantitative separation result on systems of multiple clouds all of the same intrinsic speed, which shows that not only do the two extreme clouds stay well separated most of the time, but \emph{all} adjacent pairs of clouds do.
Recall from~\eqref{eq:L-def} that for $j \in [\nu-1]$, 
$L_j(t) \in \ZP$ denotes the separation of clouds $\fC_j$ and $\fC_{j+1}$ at time $t$.
 
\begin{proposition}
\label{prop:cloud-separation-diffusive}
        Suppose that~\eqref{eq:non-degeneracy} holds, that $\nu \geq 2$, and that
        $v_1 = \cdots = v_{\nu}$. 
 Then there exists $\eps>0$ such that, for all $n$ sufficiently large,
\[ \Pr \left[ \sum_{i=0}^n \1 { \min_{1 \leq j \leq \nu -1} L_j (\tau_i) \leq n^\eps  } > n^{1-\eps} \right] \leq \re^{-n^\eps} .\]
\end{proposition}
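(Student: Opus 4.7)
The plan is to proceed by induction on the number $\nu$ of same-speed clouds. The base case $\nu = 2$ is immediate: here $\min_{1\leq j \leq \nu-1} L_j(\tau_i) = L_1(\tau_i) = L(i)$, so Proposition~\ref{prop:extreme-clouds} applied with $\gamma$ set equal to the target exponent $\eps$ yields a count bounded by $n^{1/2+O(\eps)}$, which is certainly $\leq n^{1-\eps}$ for $\eps$ small enough.

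For the inductive step with $\nu \geq 3$, assume the statement is proved for all same-speed systems with strictly fewer clouds. Combining Proposition~\ref{prop:extreme-clouds} (for a parameter $\gamma$ to be chosen, close to $1/2$) with Lemma~\ref{lem:max-cloud-deviation}, off an event of probability $\leq \re^{-n^{\eps}}$ and apart from at most $O(n^{1/2 + \gamma + O(\eps)})$ indices $i \leq n$, we have both $L(i) \geq n^\gamma$ and $\max_{j \in [\nu]} \Delta_{\fC_j}(\tau_i) \leq n^{2\eps}$; call such $i$ \emph{good}. The telescoping identity
\[ L(i) = \sum_{j=1}^{\nu-1} L_j(\tau_i) + \sum_{j=2}^{\nu-1} \Delta_{\fC_j}(\tau_i) \]
then forces, by pigeonhole at every good $i$, an index $j^\star(i) \in [\nu-1]$ with $L_{j^\star(i)}(\tau_i) \geq n^\gamma/(2\nu)$.

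Now fix $j_0 \in [\nu-1]$; the goal is to bound $|\{i \leq n : L_{j_0}(\tau_i) < n^\eps\}|$. Partition the good indices by the value of $j^\star(i) \in [\nu-1]$. When $j^\star(i) = j_0$, the event $\{L_{j_0}(\tau_i) < n^\eps\}$ cannot hold, since $L_{j_0}(\tau_i) \geq n^\gamma/(2\nu) > n^\eps$ for $\eps$ small. When $j^\star(i) = j^\star \neq j_0$, the cut at position $k_{j^\star}$ splits the particle system into a left sub-system consisting of clouds $\fC_1, \ldots, \fC_{j^\star}$ (with $j^\star$ clouds, all at the common speed) and a right sub-system $\fC_{j^\star+1}, \ldots, \fC_\nu$ (with $\nu - j^\star$ clouds, same common speed); the gap $L_{j_0}$ is an interior gap of exactly one of these smaller systems. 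The inductive hypothesis applied to that sub-system bounds the number of such~$i$, and a union bound over $j^\star \in [\nu-1] \setminus \{j_0\}$ and subsequently over $j_0 \in [\nu-1]$ completes the proof after a suitable choice of the exponents.

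The main obstacle is justifying the decoupling rigorously: the decomposition index $j^\star(i)$ is random and can vary with $i$, and particles in a would-be sub-system follow the isolated sub-system dynamics only while the separating gap $L_{j^\star}$ remains strictly positive, since a collision across the boundary would invoke the elastic rule and transfer momentum between the two sides. To handle this I would introduce stopping times at which $L_{j^\star}$ drops below the threshold $n^\gamma/(2\nu)$ (or at which $j^\star$ itself changes), couple the full process with two independent sub-system copies on each excursion of persistent separation via the strong Markov property, and bound the number of such regime changes polynomially using the tightness of clouds (Lemma~\ref{lem:max-cloud-deviation}) together with the linear spatial extent of the system on $[0,\tau_n]$. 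The resulting coupling errors are polynomial in $n$ and easily absorbed into the $n^{1-\eps}$ exponent budget.
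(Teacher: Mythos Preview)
Your plan is close in spirit to the paper's proof, but the two differ in how the induction is organised and, more importantly, in how the decoupling is made rigorous.

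The paper does not induct on the number of clouds~$\nu$; instead it inducts on a parameter~$k \in [\nu-1]$ counting how many inter-cloud gaps are simultaneously large. A time~$i$ is called \emph{$k$-good} if at least~$k$ of the gaps $L_j(\tau_i)$ exceed a threshold $2^{-k}n^{\gamma_k}$ (with $\gamma_k = \gamma^k$). The base case $k=1$ is exactly your pigeonhole observation from Proposition~\ref{prop:extreme-clouds}. For the inductive step, the paper divides $[0,n]$ into \emph{deterministic} blocks of length $n_k \approx n^{\gamma_k}$, and uses the elementary fact $|L_j(i+1)-L_j(i)|\leq 1$ to conclude that if the start of a block is $k$-good, then all $k$ large gaps persist throughout the entire block. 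This gives a deterministic window on which the $k{+}1$ resulting subsystems are genuinely decoupled, and one applies Proposition~\ref{prop:extreme-clouds} (not the full inductive hypothesis) to whichever subsystem still contains at least two clouds, producing a $(k{+}1)$-st large gap for most times in the block.

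Your plan instead invokes the full inductive hypothesis on subsystems with fewer clouds, which is also viable, but your decoupling mechanism is where the sketch is weakest. Partitioning good times by the value of $j^\star(i)$ does not by itself give decoupling, since $j^\star(i)=j^\star$ only says the gap is large \emph{at} time~$i$, whereas the coupling to an isolated subsystem requires the gap to remain large over a \emph{window}. Your proposed fix via stopping times and random excursions of persistent separation can be made to work, but controlling the number of regime changes and the random excursion lengths is exactly the delicate bookkeeping the paper avoids. The paper's deterministic block decomposition, driven by the 1-Lipschitz property of the gaps, is the cleaner device: it replaces your stochastic excursions with fixed-length windows and obviates the need to bound regime changes at all. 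If you wish to salvage your induction-on-$\nu$ approach, I would recommend borrowing precisely this block structure rather than the excursion argument you sketch.
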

\begin{proof}
Suppose $\nu \geq 2$.
For $\gamma \in (0,1/2)$, take $\eps \in (0,\frac{1-2\gamma}{12})$ 
and, for $k \in \N$, define
\begin{equation}
\label{eq:gamma-alpha-k}
\gamma_{k} := \gamma^k,  ~~ \alpha_{k} := 1- 2^{1-k} \gamma^{k(k-1)/2} \left( \frac{1}{2}-\gamma- 6\eps \right) , ~\text{and}~ \eps_k := 2^{1-k} \eps. \end{equation}
For
$k \in \{1,2,\ldots,\nu-1\}$, say time~$i \in \ZP$ is $k$-\emph{good}
if $\# \{ j \in [\nu-1] : L_j (i) \geq 2^{-k} n^{\gamma_k} \} \geq k$
(the definition of $k$-good also depends on~$n$, but we assume $n$ is fixed and sufficiently large, as determined in the subsequent argument, and keep the $n$-dependence tacit).
Let $\cG_k(i)$ denote the event that $i$ is $k$-good, and note that $\cG_{k+1} (i) \subseteq \cG_k (i) \in \cF_i$ for $1 \leq k \leq \nu -2$. Define, for $k \in [\nu-1]$, 
\[ B_{k}(A) := \sum_{i \in A} \2{ \cG_k^\rc (i)} , \]
the number of $i \in A \subseteq [n]$ that are not $k$-good.
If $\nu =2$, note that $L_1 (\tau_n) = L(n)$ as defined at~\eqref{eq:L-extreme-def}, and
we 
have 
from Proposition~\ref{prop:extreme-clouds} that, for $\eps>0$ as given, for all $n$ large enough,
\[ \Pr [ B_{1} ( [n] ) \geq n^{\frac{1}{2} +\gamma_1 + 5 \eps} ] \leq \re^{-n^{\eps}}, \]
which
completes the proof if $\nu=2$. The general case is a finite induction, for which we suppose $\nu >2$ and
consider the inductive hypothesis
\begin{equation}
    \label{eq:induction-step}
    \Pr [ B_{k} ( [n] ) \geq n^{\alpha_k} ] \leq \re^{-n^{\eps_k}} , \text{ where }       k \in [\nu-1],
\end{equation}
which we have verified for~$k=1$
with $\alpha_1 = \frac{1}{2}+\gamma_1 + 6 \eps_1$ and $\eps_1 = \eps$ from above, as in~\eqref{eq:gamma-alpha-k}. 

The heart of the induction is to show that, with very high probability, most $k$-good times can be upgraded to $(k+1)$-good times, as follows. If a time $t \in [n]$ is $k$-good, then the next $n_k$ times will have $k$ gaps that remain large (where $n_k$ is chosen appropriately of order $n^{\gamma_k}$), since particles only move to their nearest neighbours, meaning gaps cannot shrink too fast. Over that relatively long time, the $k+1$ subsystems (separated by the gaps that made the original time $k$-good)
evolve independently. As long as $k < \nu-1$, at least one of those subsystems contains at least two clouds, and so we can apply Proposition~\ref{prop:extreme-clouds} to conclude that  the extreme clouds of that subsystem separate for most times in the interval $[t,t+n_k]$. Since clouds are tight by the cloud concentration result in Lemma~\ref{lem:max-cloud-deviation}, this means that there is at least one (additional) large gap between clouds at most of those times.
This additional separation allows us to upgrade most $k$-good times
to $(k+1)$-good times, with high probability. Iterating this shows that, with high probability, most times are $(\nu-1)$-good, which means all clouds are well separated. We give the details, starting by explaining how a particular $k$-good time leads to many $(k+1)$-good times, and then using a blocking argument prove~\eqref{eq:induction-step} for each $1 \leq k \leq \nu -1$.

First we show that, for every $k \in [\nu-2]$, for all $t \in [n]$ and all $n$ large enough, 
\begin{equation}
\label{eq:good-propagates}
\Pr \Bigl[ B_{k+1} \bigl( [t, t+n_k ] \bigr) \geq n^{\alpha_1 \gamma_{k}}   \Bigmid \cF_{t} \Bigr] \leq \re^{-n^{\eps_1}}, \text{ on } \cG_k (t), \end{equation}
where $n_k := \lfloor 2^{-k-1} n^{\gamma_k} \rfloor$.
Fix any $k$-good time $t \in [n]$. By definition of the $k$-good property, that means there exists $J_t \subset [\nu]$ with size $\# J_t = k$ for which $L_j (t) \geq 2^{-k} n^{\gamma_k}$ for every $j \in J_t$.
(For definiteness, in case of a choice, choose the $J_t$ that has the smallest possible maximal element.)
Since $|L_j (n+1) - L_j (n) | \leq 1$ for all $n \in \ZP$,
it follows that
\begin{equation}
\label{eq:k-good} 
L_j (i) \geq 2^{-k-1} n^{\gamma_k} \text{ for all } i \in [t, t+n_k] \text{ and all } j \in J_t.
\end{equation}

Suppose that $t$ is $k$-good. 
List $J_t$ in order as $J_t = \{ j_1, j_2, \ldots, j_k \}$
and define a corresponding ordered partition of $[\nu]$, denoted $\fB_1, \ldots, \fB_{k+1}$,
by 
\[ \fB_1 := \bigcup_{j=1}^{j_1} \fC_{j} , ~~ \fB_2 := \bigcup_{j=j_1+1}^{j_2} \fC_j , ~~ \ldots ~~, \fB_{k+1} := \bigcup_{j=j_k+1}^{\nu} \fC_j .\]
Each $\fB_i$ is a union of one or more clouds $\fC_j$. Since $k+1 \leq \nu-1$, there is at least one $\fB_\ell = \cup_{j=j_{\ell-1}+1}^{j_\ell} \fC_j$ (with convention $j_0 =0$ and $j_{k+1} = \nu$) that is the union of at least two clouds; choose and fix such $1 \leq \ell \leq k+1$
(the $\fB_i$ and $\ell$ depend on~$t$ but we omit that from the notation). Recall the property~\eqref{eq:k-good}, which states that the collections of clouds $\fB_1, \ldots, \fB_{k+1}$ stay well-separated over time interval $[t, t+ n_k]$. The same is true if we ran a system with the same configuration at time $t$ and independent evolution of subsystems  $\fB_1, \ldots, \fB_{k+1}$. Hence there is a natural coupling over time $[t, t+n_k]$ of the original system to a collection of independent  subsystems  $\fB_1, \ldots, \fB_{k+1}$ (which do not interact). In particular, over time $[t,t+n_k]$, the part of the full system corresponding to $\fB_\ell$, when it jumps, has the same law as an isolated system containing only the particles in $\fB_\ell$.
 
Consider an isolated system containing only the particles corresponding to $\fB_\ell$, started (at time $0$) from the configuration inherited from the full system at time $t$. Denote by $\tau^\ell_0, \tau^\ell_1, \ldots$ the jump times of this isolated system, analogously to~\eqref{eq:tau-sequence}. We retain the original labels for the particles and clusters, and set
\[ L^\ell (n) := \min_{i \in \fC_{j_\ell}} X_i (\tau^\ell_n) - \max_{i' \in \fC_{j_{\ell-1}}} X_{i'} (\tau_n^\ell) , \]
the separation of the two extreme clouds of the isolated system, according to its internal jump clock. 
Proposition~\ref{prop:extreme-clouds} applied to this isolated system 
yields, for $\eps>0$ small enough,
\[ 
\Pr \left[ \sum_{i=0}^{n} 
\1{  L^\ell (i)  \leq n^{\gamma} } > n^{\frac{1}{2}+\gamma+5\eps } 
\right] \leq \re^{-n^\eps} .\]
In addition, write
\[\Delta^\ell (n) := \sum_{j=j_{\ell-1}+2}^{j_\ell-1} \Delta_{\fC_j} (\tau^\ell_n) ,\]
for the total span of all clouds other than the two extreme clouds. An application of Lemma~\ref{lem:max-cloud-deviation} shows that $\Pr [ \max_{0 \leq i \leq n} \Delta^\ell (i) \geq n^{2\eps} ] \leq \re^{-n^\eps}$.
In $n$ steps of the full system, the subsystem corresponding to $\fB_\ell$ takes at most $n$ steps. On the other hand, a binomial concentration bound shows that, with probability at least $1-\re^{-n^\eps}$, the subsystem corresponding to $\fB_\ell$  takes at least $\delta n$ steps, where $\delta>0$ is a positive constant depending only on the $a_i, b_i$, whose existence is guaranteed by~\eqref{eq:non-degeneracy}. 
Hence, since $\gamma \in (0,1/2)$ was arbitrary,
by the coupling described above, we deduce for the full system that, for any $\gamma \in (0,1/2)$, there exists $\eps>0$ so that, for all $n$ large enough, 
\[ 
\Pr \left[ \sum_{i=t}^{t+n} 
\1{  \max_{j_{\ell-1}+1 \leq j \leq j_\ell-1} L_j (i)  \leq n^{\gamma} } > n^{\alpha_1 } 
\biggmid \cF_{t} \right] \leq \re^{-n^\eps} , \text{ on } \cG_k (t), \]
where $\alpha_1 = \frac{1}{2} +\gamma +6 \eps$ as at~\eqref{eq:gamma-alpha-k}. 
Combined with~\eqref{eq:k-good}, this verifies~\eqref{eq:good-propagates}.

 Now we can complete the induction. 
Suppose that~\eqref{eq:induction-step} is true for some $k$ with $1 \leq k \leq \nu -2$, $\gamma_k \in (0,1/2)$ and $\alpha_k \in (1/2,1)$ given by~\eqref{eq:gamma-alpha-k}, and some $\eps_k>0$.
Define $s_j := j n_k$, so that $0 = s_0 < s_1 < \cdots < s_{w_k} \leq n$,
where $w_k := \lfloor n/n_k \rfloor$ has $w_k \sim 2^{k+1} n^{1-\gamma_k}$ as $n \to \infty$. Consider time intervals $i_p := [ s_{p-1}, s_p -1 ]$
for $p \in [w_k]$ and $i_{w_k+1} := [ s_{w_k} , n]$.
On the event $B_{k} ( [n] ) < n^{\alpha_k}$,
there can be 
no more than $n^{\alpha_k (1- \gamma_k)}$ intervals~$i_p$ ($p \in [w_k+1]$)
for which $B_{k} ( i_p ) > n^{\alpha_k \gamma_k}$. 
Write $\cP := \{ p \in [w_k+1] : B_k (i_p) \leq n^{\alpha_k \gamma_k} \}$. Then,
since each $i_p$ has size $n_k < n^{\gamma_k}$,
\begin{equation}
\label{eq:Bk-k+1}
B_{k+1} ( [n]) \leq n^{\alpha_k (1-\gamma_k)} n^{\gamma_k} + \sum_{p \in \cP} B_{k+1} ([i_p]) ,
\text{ on } \{ B_k ([n]) < n^{\alpha_k} \}.
\end{equation}
Consider any $p \in \cP$. Then there is some $k$-good time~$t_p$ (for definiteness, choose the earliest) with $s_{p-1} \leq t_p < s_{p}$ and $t_p \leq s_{p-1} + n^{\alpha_k\gamma_k}$.
Hence, by~\eqref{eq:good-propagates} and the fact that $\# \cP = w_k +1 \leq 2^\nu n^{1-\gamma_k}$,
\begin{align*}
 \Pr \Bigl[ \sum_{p \in \cP} B_{k+1} ([i_p]) \geq 2^{1+\nu} n^{1-\gamma_k (1-\alpha_k) } \Bigr] & \leq
2^\nu  n^{1-\gamma_k} \sup_{1 \leq p \leq w_k+1} \Pr [ B_{k+1} ( [i_p] ) \geq 2 n^{\alpha_k \gamma_k} 
] \\
& \leq 2^\nu n^{1-\gamma_k} \sup_{1 \leq p \leq w_k+1} \Pr [ B_{k+1} ( [i_p] ) \geq  n^{\alpha_k \gamma_k}  +n^{\alpha_1 \gamma_k} ] \leq
\re^{-n^{\eps_1}} , \end{align*}
using also that $\alpha_k \geq \alpha_1$ by~\eqref{eq:gamma-alpha-k}. 
It follows from
the induction hypothesis~\eqref{eq:induction-step} and the fact that $\gamma_k < 1/2$  and $\eps_k \leq \eps_1$ that
\begin{align*}
   & {} \Pr [  B_{k+1} ( [n]) \geq 2^{2+\nu}  n^{1-\gamma_k (1-\alpha_k) } ] \\
    & {} \quad {} \leq  \Pr [  B_{k+1} ( [n]) \geq  2^{1+\nu} n^{1-\gamma_k (1-\alpha_k) } + n^{\alpha_k +\gamma_k(1-\alpha_k)} , \, B_k (n) < n^{\alpha_k} ] + \Pr [ B_k ([n]) \geq n^{\alpha_k} ] \\
      & {} \quad {} \leq  \Pr \Bigl[ \sum_{p \in \cP} B_{k+1} ([i_p]) \geq 2 n^{1-\gamma_k (1-\alpha_k) } \Bigr] + \re^{-n^{\eps_k}} \leq 2 \re^{-n^{\eps_{k}}},
\end{align*}
 using~\eqref{eq:Bk-k+1}. 
Using the fact that $\alpha_{k+1} > 1 - \gamma_k (1-\alpha_k)$ by~\eqref{eq:gamma-alpha-k} we verify
$\Pr [  B_{k+1} ( [n]) \geq   n^{\alpha_{k+1}} ] \leq \re^{-n^{\eps_{k+1}}}$,
for $\eps_{k+1} = \eps_k/2$ and all $n$ sufficiently large, which completes the inductive step. 
\end{proof}

\begin{lemma}
    \label{lem:same-speeds-speeds}
        Suppose that~\eqref{eq:non-degeneracy} holds, that $\nu \geq 2$, and that
        $v_1 = \cdots = v_{\nu} = v \in \R$.  Then, for every $i \in [N]$,  $\lim_{t\to \infty} t^{-1} X_i (t) = -v$, a.s.
\end{lemma}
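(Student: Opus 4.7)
The plan is to show, for each $j \in [\nu]$, that $\lim_{t\to\infty} t^{-1} G_j(t) = -v$ almost surely, where $G_j$ is the cloud-$\fC_j$ centre-of-mass process from~\eqref{eq:G-j-def}. Since the boundary condition~\eqref{eq:boundary-2} is precisely the local stability hypothesis~\eqref{eq:local-stability} for $[\ell,r] = [k_{j-1},k_j]$, Proposition~\ref{prop:partial-stability} applies to every cloud and yields $\Delta_{\fC_j}(t) = O(\log t)$ almost surely; since $|X_i(t) - G_j(t)| \leq \Delta_{\fC_j}(t)$ for $i \in \fC_j$, the statement then reduces to the centre-of-mass limit. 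Note also that the hypothesis $v_1 = \cdots = v_\nu = v$ together with~\eqref{eq:slopes} gives $U_N = \sum_{j=1}^{\nu} U_{k_{j-1},k_j} = v \sum_j M_{k_{j-1},k_j} = v M_N$, so the overall speed $-U_N/M_N$ from Theorem~\ref{thm:mass-speed} is consistent with $-v$.

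For the drift of $G_j$ along the jump chain, I would apply the general formula~\eqref{eq:drift_y-partial} of Lemma~\ref{lem:drift_y} with test vector $y = \ones_{[k_{j-1}+1,k_j]}$ and $[\ell,r] = [k_{j-1},k_j]$. Inspection of~\eqref{eq:drift_y-partial} reveals that the indicator boundary terms $\2\{\ell > \ell_-\}$ and $\2\{r < r_+\}$ vanish precisely on the \emph{isolation} event $\cI_j(n) := \{ L_{j-1}(\tau_n) > 0, \, L_j(\tau_n) > 0 \}$ (with the convention $L_0 \equiv L_\nu \equiv +\infty$). On $\cI_j(n)$ the conditional drift is exactly
\[
\Exp [ G_j(\tau_{n+1}) - G_j(\tau_n) \mid \cF_{\tau_n} ] = \frac{-U_{k_{j-1},k_j}}{M_{k_{j-1},k_j} R_{X(\tau_n)}} = \frac{-v}{R_{X(\tau_n)}},
\]
while off $\cI_j(n)$ the drift is bounded in absolute value by $C/R_{X(\tau_n)}$ for a constant $C$ depending only on the $a_i, b_i, m_i$. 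Now Proposition~\ref{prop:cloud-separation-diffusive}, combined with a Borel--Cantelli argument along $n = 2^k$, gives some $\eps > 0$ so that $\#\{ i \leq n : \cI_j(i)^\rc \} \leq n^{1-\eps}$ almost surely for all large $n$, so the non-isolated drift terms contribute at most $O(n^{1-\eps})$ to the cumulative drift over $n$ jumps.

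Consequently,
\[
G_j(\tau_n) - G_j(0) = M_n - v \sum_{i=0}^{n-1} R_{X(\tau_i)}^{-1} + O(n^{1-\eps}) \quad \text{a.s.,}
\]
where $M_n$ is the Doob martingale associated to $G_j(\tau_\cdot)$, whose increments are uniformly bounded by $(\max_i m_i)/M_{k_{j-1},k_j}$, so Azuma--Hoeffding plus Borel--Cantelli give $M_n = o(n)$ almost surely. Combining with the strong law $\tau_n / \sum_{i=0}^{n-1} R_{X(\tau_i)}^{-1} \to 1$ almost surely (obtained exactly as at~\eqref{eq:slln-1}--\eqref{eq:slln-2} in the proof of Theorem~\ref{thm:mass-speed}) and the interpolation argument between jump times used there, one deduces $t^{-1} G_j(t) \to -v$ almost surely. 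The main technical step is matching the boundary error terms in~\eqref{eq:drift_y-partial} to the separation events $\{L_{j-1} > 0\}, \{L_j > 0\}$; once this identification is made, the quantitative separation from Proposition~\ref{prop:cloud-separation-diffusive} supplies the required control, and the remaining martingale/SLLN bookkeeping is a direct adaptation of the closing steps in the proof of Theorem~\ref{thm:mass-speed}.
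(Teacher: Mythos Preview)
Your argument is correct and takes a genuinely different route from the paper. The paper constructs an explicit coupling of the original system to one in which the clouds evolve \emph{independently}; each independent cloud then has centre of mass travelling at speed $-v$ by Theorem~\ref{thm:mass-speed}, and Proposition~\ref{prop:cloud-separation-diffusive} is used to show the coupling discrepancy $|G_j(t)-\tG_j(t)|$ is $o(t)$. You instead compute the conditional drift of $G_j$ directly via~\eqref{eq:drift_y-partial}, identify it as exactly $-v/R_{X(\tau_n)}$ on the isolation event, and use Proposition~\ref{prop:cloud-separation-diffusive} to bound the cumulative contribution from non-isolated steps. Both approaches hinge on the same separation estimate; yours is more self-contained (no auxiliary process), while the paper's coupling is more conceptual and dovetails with the further coupling used immediately afterwards in the proof of Theorem~\ref{thm:cloud_decomposition}.

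One small technical point: to invoke~\eqref{eq:drift_y-partial} cleanly you should take the test vector $y=\ones$ with range $[\ell,r]=[k_{j-1}+1,k_j]$, rather than $y=\ones_{[k_{j-1}+1;k_j]}$. Lemma~\ref{lem:drift_y} requires $y\in\bbX'_N$ (weakly increasing) with $y_k=y_{k+1}$ whenever $x_k=x_{k+1}$, which the truncated indicator fails off the isolation event; the constant vector satisfies the hypothesis always and yields the identical quantity $\langle \ones,\ee\rangle_{k_{j-1}+1,k_j}$, so the rest of your computation goes through unchanged.
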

\begin{proof}
If $\nu=1$ then the result is contained in Theorem~\ref{thm:stability_condition}, so it suffices to suppose $\nu \geq 2$. 
We couple the original particle system to a system in which each cloud behaves completely independently of the others. 
The two systems can be coupled to have the identical initial configurations, and that whenever ($n \in \ZP$) event $E_n := \{ \min_{1 \leq j \leq \nu-1} L_j (\tau_n) > 0 \}$ occurs, i.e., no particles from different clouds are together in the original system, they take precisely the same holding times and increments. 
Write $\tX = (\tX (t))_{t \in \RP}$ for the system with independent clouds, maintaining the labelling of the original system, so that $(\tX_i (t))_{i \in \fC_j}$ behave as independent elastic systems for each $j$, and weak order is maintained within each $\fC_j$, but particles from different `clouds' can swap places due to the independence.

Consider $G_j(t)$ the centre of mass of cloud $\fC_j$ in the original system and $\tG_j(t)$ the centre of mass of cloud $\fC_j$ in the independent system.
For fixed $j$, the subsystem $(\tX_i (t))_{i \in \fC_j}$
(with suitable mapping of notations) satisfies the hypotheses of Theorem~\ref{thm:mass-speed}, and so, in  particular,
\begin{equation}
    \label{eq:independent-speeds}
    \lim_{t \to \infty} t^{-1} \tG_j (t) = - v_j , \as
\end{equation}
Fix $j \in [\nu]$ and define $\tau_{j,0} := 0$ and, for $n \in \ZP$,
\[ \tau_{j,n+1} := \inf \{ t \geq \tau_{j,n} : G_j (t) \neq G_j (\tau_{j,n} ) \}, \]
 the jump times of $G_j$ (a subsequence of $\tau_0, \tau_1, \ldots$ defined at~\eqref{eq:tau-sequence}). Define $\ttau_{j,0}, \ttau_{j,1}, \ldots$ analogously for $\tG_j$. 
Also define $N_{j,t} := \sup \{ n \in \ZP : \tau_{j,n} \leq t\}$ and
$\tN_{j,t} := \sup \{ n \in \ZP : \ttau_{j,n} \leq t\}$, the numbers of jumps, up to time~$t$,
of $G_j$ and $\tG_j$, respectively. 
The process $\tG_j$ can jump several times between the jumps of $G_j$, but the coupling guarantees that 
if $E_i$ occurs in the coupled system at time $\tau_{j,i}$, then $G_j (\tau_{j,i} ) - G_j (\tau_{j,i+1})
= \tG_j (\tau_{j,i+1} ) - \tG_j (\tau_{j,i})$. It follows that, for a constant $C<\infty$,
\begin{align*}
  \bigl|  G_j (t) - \tG_j (t) \bigr| & \leq \sum_{i=0}^{N_{j,t}-1} \bigl( G_j (\tau_{j,i+1}) - G_j (\tau_{j,i}) \bigr) \2{ E^\rc_i } 
  + \sum_{i=0}^{N_{j,t}-1} \bigl( \tG_j (\tau_{j,i+1}) - \tG_j (\tau_{j,i}) \bigr) \2{ E^\rc_i } 
  \\
    & \leq C \sum_{i=0}^{\max (N_{j,t} , \tN_{j,t})-1}  \2{ E^\rc_i } ,
\end{align*}
since $G_j (0) = \tG_j(0)$ and both $G_j$ and $\tG_j$ have bounded increments. Since jump rates are uniformly bounded away from $0$ and $\infty$, there is a constant $B <\infty$ such that, a.s., for all $t$ large enough, both $N_{j,t} \leq B t$ and $t^{-1} \tN_{j,t} \leq B t$. Hence 
$\lim_{t \to \infty} t^{-1} | G_j (t) - \tG_j (t) | =0$, a.s., 
by Proposition~\ref{prop:cloud-separation-diffusive}.    Also by Lemma~\ref{lem:max-cloud-deviation}, for every $i \in \fC_j$, $n^{-1} |X_i (\tau_n) - G_j (n)| \to 0$, a.s.
\end{proof}

Finally, we can complete the proof of the cloud decomposition, Theorem~\ref{thm:cloud_decomposition}.

\begin{proof}[Proof of Theorem~\ref{thm:cloud_decomposition}]
The cloud stability result, part~\ref{thm:cloud_decomposition-b}, is a consequence of Proposition~\ref{prop:partial-stability}. 
For the remaining parts, we use another coupling. 
Split the system into subsystems of constant-speed clouds,
so for example, $\fB_1 = \fC_1 \cup \cdots \cup \fC_{i_1}$ is the leftmost subsystem, where $i_1 = \max \{ j \in [\nu] : v_j = v_1\}$. 
This gives subsystems $\fB_1, \ldots, \fB_K$, say $(K \in [\nu])$.
Consider a system that starts from the same initial configuration as the original system, 
but in which the subsystems $\fB_j$ evolve independently of each other. In the 
independent system,  each $\fB_j$ evolves as a system with same-speed clouds, and so satisfies the strong law from Lemma~\ref{lem:same-speeds-speeds} and the cloud separation result from Proposition~\ref{prop:cloud-separation-diffusive}. We couple the independent system and the original system such that whenever the original system is in a configuration in which no two subsystems have particles at the same site, we use the same holding times and jumps as the independent system. By the strict ordering of the speeds, in particular, the strong law shows that there is an a.s.-finite time $\tau$ for which, for all $t \geq \tau$, the two systems evolve with exactly the same increments. It follows that the maximum displacement between the particles in the original system and their partners in the independent system remains bounded by a finite random variable. This verifies the speeds for the original system as stated in part~\ref{thm:cloud_decomposition-a} of the theorem, and also
means that the separation result from Proposition~\ref{prop:cloud-separation-diffusive}
remains valid for the original system, completing the proof of part~\ref{thm:cloud_decomposition-c} of the theorem.
\end{proof}

\section{Comments on invariant distributions}
\label{sec:invariant_distributions}

\subsection{Irreversibility of dynamics}
\label{sec:irreversibility}

A continuous-time Markov chain on a countable state space $\bbX$, with transition
rates $q_{i,j}$, $i,j \in \bbX$, is \emph{reversible}
with respect to an invariant measure $\pi = (\pi_i)_{i \in \bbX}$ if the detailed balance relation
$\pi_i q_{i,j} = \pi_j q_{j,i}$ holds for all $i, j \in \bbX$. For any
cycle of states $i_1, i_2, \ldots, i_n, i_{n+1}$ with $i_1 = i_{n+1}$ and no other pair of states equal,
taking a product of the detailed balance relations $\pi_{i_k} q_{i_k, i_{k+1}} = \pi_{i_{k+1}} q_{i_{k+1}, i}$ over $k \in \{1,\ldots, n\}$ implies that 
\begin{equation}
    \label{eq:kolmogorov}
 \prod_{k=1}^{n} q_{i_k, i_{k+1}}  = 
\prod_{k=1}^{n} q_{i_{k+1},i_{k}} ,
\end{equation}
i.e., the product of the transition rates along the forward cycle is the same as the 
 product of the transition rates along the backward cycle.
In fact, a criterion of Kolmogorov says that the Markov chain is reversible if and only if~\eqref{eq:kolmogorov} holds for \emph{every} cycle~\cite[p.~23]{kelly}.

It is well known that the simple exclusion process, as described in \S\ref{sec:exclusion}, is reversible,
and this
is key to the explicit computation of invariant measures. For the elastic model
studied in the present paper, the generic situation, provided $N \geq 3$, is that reversibility fails:
see Figure~\ref{fig:cycle} for an example. Thus it seems that, generically, it may be difficult to obtain an explicit expression for the invariant measure in the stable case.

\begin{remark}[Discrete vs.~continuous time]
Consider the elastic particle system with
$m_i \equiv m \in (0,\infty)$ for all $i \in [N]$.
As mentioned in \S\ref{sec:discrete-time}, since the total activity rate~\eqref{eq:time-change} of the elastic model with constant masses is independent of the current configuration, the discrete-time jump chain associated with the continuous-time Markov chain is stable precisely when the continuous-time chain is stable, and the stationary measures coincide. The Kolmogorov cycle criterion has a direct discrete-time analogue, where in~\eqref{eq:kolmogorov} one replaces the transition rates by transition \emph{probabilities}. Since the total activity rate is constant, the same example in Figure~\ref{fig:cycle} also shows that reversibility fails in the discrete-time setting, as it must. By contrast, the exclusion process does not have constant total activity rate, but nevertheless it turns out that the jump-chain of the exclusion process is \emph{also} reversible (with a different invariant measure from the continuous-time version).
\end{remark}

\begin{figure}[t]
\centering
\scalebox{0.85}{
 \begin{tikzpicture}[domain=0:1, scale=1.2]
\draw[dotted,<->] (0,0) -- (10,0);
\node at (10.4,0)       {$\Z$};
\draw[black,fill=white] (1,0) circle (.5ex);
\draw[black,fill=black] (2,0) circle (.5ex);
\draw[black,fill=white] (3,0) circle (.5ex);
\draw[black,fill=white] (4,0) circle (.5ex);
\draw[black,fill=black] (5,0) circle (.5ex);
\draw[black,fill=white] (6,0) circle (.5ex);
\draw[black,fill=white] (7,0) circle (.5ex);
\draw[black,fill=black] (8,0) circle (.5ex);
\draw[black,fill=white] (9,0) circle (.5ex);
\node at (1.3,0.6)       {\small $1$};
\node at (2.5,0.6)       {\small $2$};
\draw[black,->,>=stealth] (2,0) arc (30:141:0.58);
\draw[black,->,>=stealth] (5,0) arc (30:141:0.58);
\node at (4.5,0.6)       {\small $1$};
\node at (5.5,0.6)       {\small $1$};
\draw[black,->,>=stealth] (8,0) arc (30:141:0.58);
\node at (7.5,0.6)       {\small $2$};
\node at (8.5,0.6)       {\small $1$};
\draw[black,->,>=stealth] (2,0) arc (150:39:0.58);
\draw[black,->,>=stealth] (5,0) arc (150:39:0.58);
\draw[black,->,>=stealth] (8,0) arc (150:39:0.58);
\end{tikzpicture}}
\bigskip
\bigskip

\scalebox{0.57}{
 \begin{tikzpicture}[-,auto,node distance=3cm,
  thick,main node/.style={rectangle,draw,inner sep=0pt,minimum size=14pt,very thick},body node/.style={rectangle,draw,inner sep=10pt,minimum size=14pt,very thick},fake node/.style={rectangle,inner sep=10pt}]
\node[fake node] (10) at (4,5) {\usebox{\configaa}};
\node[fake node] (11) at (4,4.5) {\usebox{\configaa}};
\node[fake node] (12) at (5,4.5) {\usebox{\configaa}};
\node[fake node] (30) at (7.5,0.5) {\usebox{\configaa}};
\node[fake node] (31) at (7.5,-0.5) {\usebox{\configaa}};
\node[fake node] (4) at (3,4.5) {\usebox{\configaa}};
\node[fake node] (41) at (4,4.5) {\usebox{\configaa}};
\node[fake node] (5) at (-1,0.5) {\usebox{\configaa}};
\node[fake node] (20) at (0.5,0.5) {\usebox{\configaa}};
\node[fake node] (21) at (0.5,-0.5) {\usebox{\configaa}};
\node[fake node] (22) at (0,0.5) {\usebox{\configaa}};
\node[fake node] (23) at (8.0,0.5) {\usebox{\configaa}};
\node[fake node] (24) at (9,0.5) {\usebox{\configaa}};
\node[body node] (1) at (4,5) {\usebox{\configa}};
\node[body node] (2) at (0,0) {\usebox{\configb}};
\node[body node] (3) at (8,0) {\usebox{\configc}};
    \path[->]
(4) edge node [sloped,anchor=center,above] {$1$} (5);
\path[->]
(22) edge node [sloped,anchor=center,below] {$3$} (41);
\path[->]
(30) edge node [sloped,anchor=center,above] {$1$} (20);
\path[<-]
(31) edge node [sloped,anchor=center,below] {$2$} (21);
\path[<-]
(12) edge node [sloped,anchor=center,above] {$2$} (24);
\path[->]
(11) edge node [sloped,anchor=center,below] {$2$} (23);
\end{tikzpicture}}
\hfill
\scalebox{0.57}{
 \begin{tikzpicture}[-,auto,node distance=3cm,
  thick,main node/.style={rectangle,draw,inner sep=0pt,minimum size=14pt,very thick},body node/.style={rectangle,draw,inner sep=10pt,minimum size=14pt,very thick},fake node/.style={rectangle,inner sep=10pt}]
\node[fake node] (10) at (4,5) {\usebox{\configaa}};
\node[fake node] (11) at (4,4.5) {\usebox{\configaa}};
\node[fake node] (12) at (5,4.5) {\usebox{\configaa}};
\node[fake node] (30) at (7.5,0.5) {\usebox{\configaa}};
\node[fake node] (31) at (7.5,-0.5) {\usebox{\configaa}};
\node[fake node] (4) at (3,4.5) {\usebox{\configaa}};
\node[fake node] (41) at (4,4.5) {\usebox{\configaa}};
\node[fake node] (5) at (-1,0.5) {\usebox{\configaa}};
\node[fake node] (20) at (0.5,0.5) {\usebox{\configaa}};
\node[fake node] (21) at (0.5,-0.5) {\usebox{\configaa}};
\node[fake node] (22) at (0,0.5) {\usebox{\configaa}};
\node[fake node] (23) at (8.0,0.5) {\usebox{\configaa}};
\node[fake node] (24) at (9,0.5) {\usebox{\configaa}};
\node[body node] (1) at (4,5) {\usebox{\configa}};
\node[body node] (2) at (0,0) {\usebox{\configb}};
\node[body node] (3) at (8,0) {\usebox{\configc}};
    \path[->]
(4) edge node [sloped,anchor=center,above] {$1$} (5);
\path[->]
(22) edge node [sloped,anchor=center,below] {$1$} (41);
\path[->]
(30) edge node [sloped,anchor=center,above] {$1$} (20);
\path[<-]
(31) edge node [sloped,anchor=center,below] {$2$} (21);
\path[<-]
(12) edge node [sloped,anchor=center,above] {$1$} (24);
\path[->]
(11) edge node [sloped,anchor=center,below] {$2$} (23);
\end{tikzpicture}}
\caption{An example of an $N=3$ particle system showing that the dynamics for the elastic process is not reversible. Take jump rates $(a_i, b_i)$ for particles in free space given by $(1,2), (1,1), (2,1)$ (\emph{top picture}). Pictured are the transition rates for the cycle $(1,1) \leftrightarrow (0,2) \leftrightarrow (0,1)
\leftrightarrow (1,1)$ in the elastic model (\emph{bottom left}) as opposed to the exclusion process (\emph{bottom right}). In the exclusion process the rates for sequence
$(1,1) \rightarrow (0,2) \rightarrow (0,1) \rightarrow (1,1)$ 
and its reversal
$(1,1) \leftarrow (0,2) \leftarrow (0,1) \leftarrow (1,1)$ 
have the same product: $1 \cdot 2 \cdot 1 = 2 \cdot 1 \cdot 1$. In the elastic  process, the rates for sequence
$(1,1) \rightarrow (0,2) \rightarrow (0,1) \rightarrow (1,1)$ 
have product $1 \cdot 2 \cdot 2 = 4$ while  its reversal
$(1,1) \leftarrow (0,2) \leftarrow (0,1) \leftarrow (1,1)$ 
has product $2 \cdot 1 \cdot 3 = 6$.}
\label{fig:cycle}
\end{figure}
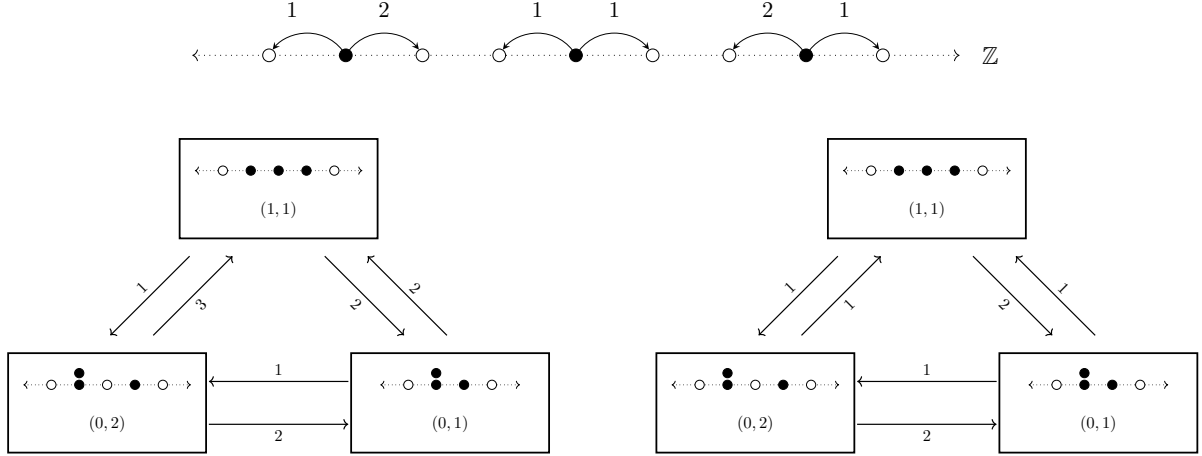

\subsection{Exact computations}
\label{sec:exact_computations}
 
Suppose that the elastic particle system is stable, i.e.,
satisfies the criterion~\eqref{eq:stability_condition} from Theorem~\ref{thm:stability_condition}. Then the Markov process
of inter-particle separations $\eta (t) := (\eta_1(t), \ldots, \eta_{N-1}(t))$ has an invariant distribution $\pi^{(N-1)} (x)$, $x \in \ZP^{N-1}$ (cf.~Corollary~\ref{cor:stability}).
Reversibility often enables one to explicitly compute the corresponding invariant measure;
for example, the exclusion process (\S\ref{sec:exclusion}) is reversible and stationary measures are product-geometric (see e.g.~\cite{mmpw}). 
For non-reversible systems exact computation of stationary distributions is typically much harder. Generically, as in Figure~\ref{fig:cycle}, there is no reversibility for the elastic particle system for $N \geq 3$, and exact computation of stationary distributions remains an open problem (we comment on this in~\S\ref{sec:problems} below). However, the case $N=2$ \emph{is} reversible (since it reduces to a birth-death random walk on $\ZP$). 
In this section we record this special case, and we finish with some open problems in~\S\ref{sec:problems}, including to identify
  exceptional parameter values for the general $N \geq 3$ case where   reversibility holds. 

\paragraph{Case $N=2$.} 
For two particles (see Theorem~\ref{thm:stability_condition} and Example~\ref{ex:small-systems}) the system is stable if and only if $u_1 > u_2$,
where $u_i$ is as defined at~\eqref{eq:intrinsic-speed} 
(note that this implies $b_1 + a_2 >0$). The inter-particle distance is a birth-death random walk on $\ZP$ and, in the positive-recurrent case,
the stationary distribution $\pi : \ZP \to [0,1]$ can be computed explicitly
by solving 
\begin{align*}  
(a_1+a_2+b_1+b_2) \pi_0 & =  (b_1+a_2) \pi_1, \\
(a_1+a_2+b_1+b_2) \pi_1 & =  (b_1+a_2) \pi_2 + (a_1+a_2+b_1+b_2) \pi_0 \\
(a_1+a_2+b_1+b_2) \pi_k & =  (b_1+a_2) \pi_{k+1} +  (a_1+b_2) \pi_{k-1}, ~k \geq 2. 
\end{align*}
The solution obtained (cf.\ e.g.~\cite[pp.~197--8]{anderson}) is
a \emph{zero-modified geometric} distribution
\begin{align*} \pi_0   = \frac{u_1-u_2}{2 (b_1+a_2)} , \text{ and }
\pi_k  = \frac{(u_1-u_2) (a_1+a_2+b_1+b_2) (a_1+b_2)^{k-1}}{2 (b_1+a_2)^{k+1}}, ~k \geq 1.
\end{align*}
An alternative argument uses speeds and `ergodic' considerations. By stability, the long-run average speed of any particle must be the same as that of the centre of mass of the system, which, by Theorem~\ref{thm:mass-speed}, is always $-U_2/M_2 = (u_1+u_2)/2$.
The leftmost particle travels at its intrinsic speed $u_1$ unless $\eta_1 (t) = 0$, in which case it cannot jump to the right but its speed of jumping left is increased. It is not hard to show then that
\[ \frac{u_1+u_2}{2} = u_1 - ( b_1+a_2) \lim_{t \to \infty} \frac{1}{t} \int_0^t \1 { \eta_1(s) = 0 } \ud s 
= u_1 -  ( b_1+a_2) \pi_0 , \]
recovering the formula for $\pi_0$; considering the rightmost particle gives the same conclusion.

\paragraph{Case $N \geq 3$.} 
Without reversibility, it appears challenging to proceed in general.
Even using the `ergodic' property and speeds, as in the last example, to compute stationary probabilities of one or more coordinates being zero does not seem straightforward: 
considering the speeds gives $N+1$ equations and there are $2^{N-1}$ unknowns (each coordinate can be zero or non-zero), but, as in the above example, the $N+1$ equations are not linearly independent, so even for $N =3$ the system is undetermined.

\subsection{Open problems}
\label{sec:problems}

It is known in the setting of Atlas models (see~\S\ref{sec:atlas} and~\cite{pp}) that  the \emph{skew symmetry} condition of Harrison \& Williams~\cite{harrison,harrison-williams,williams}, transferred from the theory of reflected diffusions in the orthant, yields a necessary and sufficient condition on parameters of the model in order for it to have a product-exponential invariant distribution, and possess a 
relative of the reversibility property known as \emph{strong duality}. We suspect that there should be a related picture in our setting, given the strong parallels with the Atlas model, but it is not clear what condition is needed to play the role of skew symmetry: the Atlas model is a certain heavy-traffic scaling limit of the particle system (see~\cite{kps} and~\cite[\S6.3]{mmpw}) but there are many potential conditions on the $a_i, b_i$ that can lead to the limiting parameters satisfying the skew symmetry condition, and no candidate that we tried was adequate to guarantee reversibility.

 \begin{problem}
For  $N \geq 3$, formulate a necessary and sufficient condition on the $a_i, b_i$ (and $m_i$) under which
the elastic particle system is reversible and has a product-geometric invariant measure.
 \end{problem}

In the case $N=3$ the problem is equivalent (see~\S\ref{sec:queues}) to that of a nearest-neighbour random walk on $\ZP^2$ with boundary reflections. 
It remains a challenging problem to obtain (necessary and/or sufficient) conditions under which a random walk on $\ZP^2$ has a stationary measure that can be expressed explicitly, with particular interest on expressions as a product of geometric terms, or a finite or countable sum of such products: see e.g.~\cite{adan,chen2015,adan2023,chen2016} and references therein. The scope of the \emph{compensation approach} as described in~\cite{adan} appears to cover the $N=3$ case of the elastic model, but successful application of that approach to this setting remains an open problem, as far as we are aware.
There are some classes of walks that have been solved, for example in~\cite{chen2015,chen2016,adan2023}, but the forms of boundary reflections assumed 
in~\cite{chen2015,chen2016,adan2023} exclude application to any stable regime for the present model.

 \begin{problem}
More generally, compute explicitly the invariant measure of the elastic particle system when $N \geq 3$.
 \end{problem}

\appendix

\section{Submartingale occupation and defocusing}
\label{sec:submartingale}

For $X:= (X_n)_{n \in \ZP}$, a stochastic process taking values in $\RP$,
we define
\begin{equation}
    \label{eq:local-time}
    \cL^X_n (x) := \sum_{i=0}^n \1{ X_i \leq x },
    \text{ for } x \in \RP, 
\end{equation}
the occupation time of interval~$[0,x]$ up to time $n \in \ZP$.

\begin{lemma}
\label{lem:submartingale-occupation}
    Let $X = (X_n)_{n \in \ZP}$ be an $\RP$-valued process, adapted to
    a filtration $(\cF_n)_{n \in \ZP}$. Suppose that there exist $B \in \RP$ and $v \in (0,\infty)$
    such that, for all $n \in \ZP$,
    \begin{align}
    \label{eq:bounded-jumps}
    \Pr [ X_{n+1} - X_n  \leq B ] & = 1; \\
        \label{eq:submartingale}
        \Exp [ X_{n+1} - X_n \mid \cF_n ] & \geq 0, \as; \\
        \label{eq:variance-lower}
        \Exp [ (X_{n+1} - X_n)^2 \mid \cF_n ] & \geq v, \as
    \end{align}
Then, for any $\gamma \in (0,1/2)$ and any $\eps \in (0, \frac{1-2\gamma}{4})$, for all $n$ sufficiently large, 
\[ \Pr \Bigl[ \cL^X_n (n^{\gamma}) \geq n^{\frac{1}{2} + \gamma + 4\eps} \Bigr] \leq \re^{-n^\eps} ,
\text{ and } \Exp \cL^X_n (n^{\gamma}) \leq n^{\frac{1}{2} + \gamma + 4\eps} . \]
\end{lemma}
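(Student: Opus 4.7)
The plan is to prove the exponential tail bound $\Pr[\cL^X_n(n^\gamma) \geq n^{\frac{1}{2}+\gamma+4\eps}] \leq \re^{-n^\eps}$; the expected-value bound then follows immediately since $\cL^X_n(n^\gamma) \leq n+1$ deterministically and $(n+1)\re^{-n^\eps}$ is negligible for large $n$. The starting point is that conditions (\ref{eq:submartingale}) and (\ref{eq:variance-lower}), together with $X_n \geq 0$, imply that $X_n^2 - vn$ is itself a submartingale, since
\[
\Exp[X_{n+1}^2 - X_n^2 \mid \cF_n] = \Exp[(X_{n+1} - X_n)^2 \mid \cF_n] + 2 X_n \Exp[X_{n+1} - X_n \mid \cF_n] \geq v.
\]
Consequently $\Exp X_n^2 \geq X_0^2 + vn$, encoding the diffusive growth of $X_n$, which underpins the whole argument.

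The first substantial step is to obtain a first-moment bound $\Exp \cL^X_n(n^\gamma) \leq C n^{\frac{1}{2}+\gamma+\eps}$ via an excursion decomposition. Let $\sigma_k$ be the $k$-th entry time of $X$ into $[0, n^\gamma]$ and $\tau_k$ the first subsequent time $X$ exceeds $n^\gamma$; the one-sided bound (\ref{eq:bounded-jumps}) forces $X_{\tau_k} \leq n^\gamma + B$, and optional stopping applied to $X_n^2 - vn$ between $\sigma_k$ and $\tau_k$ gives $\Exp[\tau_k - \sigma_k \mid \cF_{\sigma_k}] \leq (n^\gamma + B)^2/v = O(n^{2\gamma})$, so each ``below-excursion'' is short in expectation. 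The expected number of such excursions up to time $n$ is controlled by combining Doob's upcrossing inequality for the nonnegative submartingale $X_n$ (which satisfies $\Exp X_n \leq X_0 + Bn$ from the one-sided increment hypothesis) applied at an appropriate macroscopic scale, with a supplementary Lyapunov argument based again on $X_n^2 - vn$ to handle shorter above-excursions at intermediate scales.

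For the exponential concentration, I would use the Doob decomposition $\cL^X_n(n^\gamma) = M_n + A_n$, where $M_n$ is a martingale with $|M_{n+1} - M_n| \leq 1$ and $A_n = \sum_{i=1}^n \Pr[X_i \leq n^\gamma \mid \cF_{i-1}]$ is the predictable compensator. Azuma--Hoeffding gives $\Pr[|M_n| \geq n^{\frac{1}{2}+2\eps}] \leq 2 \re^{-n^{4\eps}/2}$, easily beating $\re^{-n^\eps}$ for large $n$. A matching tail bound for $A_n$ about its mean $\Exp A_n = \Exp \cL^X_n(n^\gamma)$ is obtained by splitting $[0, n]$ into blocks of length $\ell$ substantially exceeding the excursion timescale $n^{2\gamma}$, applying the first-moment estimate restarted at the start of each block via the strong Markov property, and union-bounding over blocks.

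The main obstacle is the one-sided nature of (\ref{eq:bounded-jumps}): $X_n$ can descend by an unbounded amount in a single step (constrained only by $X_n \geq 0$), so an individual above-$n^\gamma$ excursion can be arbitrarily brief, which precludes any naive count of visits to $[0, n^\gamma]$. The resolution is to exploit the diffusive growth $\Exp X_n^2 \geq vn$, which forces the process to attain macroscopic scale often enough that Doob's upcrossing inequality at a suitably chosen scale controls the leading-order number of below-excursions, and thereby reduces the problem to aggregating the per-excursion Lyapunov estimate.
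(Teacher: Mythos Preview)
Your proposal identifies the correct starting point (the $X_n^2 - vn$ submartingale) and the correct difficulty (the one-sided bound~\eqref{eq:bounded-jumps} allows an above-$n^\gamma$ excursion to terminate in a single step), but the step on which everything hinges---bounding the expected number of below-$n^\gamma$ excursions up to time~$n$---has a real gap. Doob's upcrossing inequality with only $\Exp X_n \leq X_0 + Bn$ gives at best $O(n/L)$ crossings of an interval of width~$L$; taking $L \sim n^{1/2}$ yields $O(n^{1/2})$ macroscopic crossings, but within each macroscopic cycle you have no control on the number of crossings of $[n^\gamma, 2n^\gamma]$, since the process can drop from just above $n^\gamma$ to~$0$ in one step and repeat. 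Your ``supplementary Lyapunov argument based on $X_n^2 - vn$'' bounds the \emph{duration} of a below-excursion, or the exit time from a bounded region, but not the \emph{number} of returns; so as written it does not close the gap. (A minor point: the process is not assumed Markov, so one must invoke the uniform conditional hypotheses directly rather than a strong Markov property.)

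The paper avoids counting altogether by inverting the parametrization: rather than bounding the occupation up to a fixed \emph{time}~$n$, it bounds the cumulative low-level time accumulated over the first~$k$ \emph{excursions} (via an exponential tail from the $X^2 - vn$ submartingale, just as you propose for a single excursion), and simultaneously shows that the first~$k$ high-level excursions already consume a large amount of time. For the latter, the tool missing from your proposal is a gambler's-ruin estimate on the submartingale~$X$ itself: optional stopping shows that a high-level excursion started from $\geq 2x$ reaches level $2(x+y)$ before falling to~$x$ with probability at least $x/(x+2y+B)$, and a companion maximal inequality then gives that the probability the excursion lasts at least $4vy^2$ is at least half the probability its maximum exceeds $2(x+y)$. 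Hence among the first~$k$ high-level excursions the maximum duration exceeds $4vy^2$ except with probability at most $(1 - cx/y)^k$. Choosing $x,y$ as suitable powers of~$k$ and then re-expressing in terms of elapsed time~$n$ delivers both the exponential tail and the first-moment bound directly; no Doob decomposition or Azuma--Hoeffding step is needed.
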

\begin{remarks}
    \label{rems:submartingale-occupation}
    \begin{myenumi}[label=(\roman*)]
    \item \label{rems:submartingale-occupation-i}
        A consequence of Lemma~\ref{lem:submartingale-occupation} (taking $\gamma \approx 0$ and then $\gamma \approx 1/2$) and the Borel--Cantelli lemma is that,  for every $\eps >0$ and every $B \in \RP$, 
   \begin{equation}
        \label{eq:diffusive-separation}
    \lim_{n \to \infty} n^{-1} \cL_n^X \bigl( n^{(1/2)-\eps} \bigr) 
    = \lim_{n \to \infty} n^{-(1/2)-\eps}   \cL_n^X ( B )  = 0, \as,
    \end{equation}
    which says, roughly speaking, that $X$  is (i) for all but a vanishing proportion of time growing almost diffusively, and  (ii) only of constant size for a roughly diffusively growing cumulative time. 
This `diffusive' behaviour is (to within $\log$ factors) sharp in the case of one-dimensional symmetric simple random walk,
 and so no significantly stronger conclusion is valid in general. 
    \item \label{rems:submartingale-occupation-ii}
    If $M_0, M_1, \ldots$ is a martingale with uniformly bounded increments and $\Exp [ (M_{n+1} - M_n)^2 \mid \cF_n ] \geq v$, a.s.,
    then the hypotheses~\eqref{eq:bounded-jumps}--\eqref{eq:variance-lower} are satisfied by the submartingale $X_n = |M_n|$. Thus Lemma~\ref{lem:submartingale-occupation} can be viewed in relation to results on ``martingale defocusing''~\cite{pss,ggpz,az}.
     In that context it is known~\cite{ggpz} that, for example, one can construct martingales of this type for which $\Pr [ |M_n| \leq B ] \geq n^{-\beta}$ for $\beta \in (0,1/2)$ that can be arbitrarily small,
    for \emph{very specific} times~$n$ (indeed, the construction of the martingale in~\cite{ggpz} depends on the specific choice of~$n$). Our result, however, shows that in an averaged sense the majority of times conform with the intuition that such a martingale should be `diffusive', since Lemma~\ref{lem:submartingale-occupation} implies that $\sum_{i=0}^n \Pr [ |M_i| \leq B ] = O (n^{(1/2)+\eps})$.
\end{myenumi}
\end{remarks}

\begin{proof}[Proof of Lemma~\ref{lem:submartingale-occupation}]
Let $x \in (0,\infty)$. 
    Define $\lambda^x_0:=0$ and then, successively, for $k \in \N$,
    \begin{align*}
        \rho^x_{k} & := \inf \{ m \geq \lambda^x_{k-1} : X_m \geq 2 x\}, \text{ and }
        \lambda^x_{k}  := \inf \{ m \geq \rho^x_{k} : X_m \leq x \},
    \end{align*}
    where $\inf \emptyset := \infty$, as usual. 
    To lighten the notation, we drop the superscript $x$ from now on, and write simply $\lambda_k, \rho_k$. 
    Then $0 = \lambda_0 \leq \rho_1 \leq \lambda_1 \leq \cdots$ are (possibly infinite) stopping times that partition the path of the process
    into \emph{high-level} excursions over time intervals $[\rho_k, \lambda_k)$
    and \emph{low-level} excursions over time intervals $[\lambda_k, \rho_{k+1})$. Any visits by the process to $[0,x]$ must occur during low-level excursions.
    
    We show that
    low-level excursions are, cumulatively, much shorter in duration than high-level excursions. First note that, by hypotheses~\eqref{eq:submartingale}--\eqref{eq:variance-lower},
    \[ \Exp [ X_{n+1}^2 - X_n^2 \mid \cF_n ] = \Exp [ (X_{n+1} -X_n)^2 \mid \cF_n ] + 2 X_n \Exp [ X_{n+1} - X_n \mid \cF_n ] \geq v, \as  \]
    Consequently, Theorem~2.4.12 of~\cite[p.~45]{mpw-book} shows that, for every $\ell \in \ZP$,
    on $\{ \lambda_k < \infty\}$, 
\[
\Pr \Bigl[ \max_{\lambda_k +\ell n \leq m \leq \lambda_k + (\ell+1) n} X_m \leq 2x \Bigmid \cF_{\lambda_k+\ell n} \Bigr]
\leq \frac{(2x + B)^2}{vn} , 
    \]
    by hypothesis~\eqref{eq:bounded-jumps}.
    Hence there is a constant $r >0$ (depending only on $B$ and $v$) for which 
    \begin{align*}
\Pr [ \rho_{k+1} - \lambda_k \geq (\ell+1) \lfloor r x^2 \rfloor \mid \cF_{\lambda_k +\ell \lfloor r x^2 \rfloor} ]
\leq \frac{1}{\re} \1{ \rho_{k+1} -\lambda_k \geq \ell \lfloor r x^2 \rfloor },     
    \end{align*}
    from which we deduce that,  on $\{ \lambda_k < \infty\}$,
     \begin{equation}
    \label{eq:compact-excursions}
\Pr [  \rho_{k+1} - \lambda_k \geq \ell r x^2 \mid \cF_{\lambda_k} ] \leq \re^{-\ell}.
    \end{equation}
 From~\eqref{eq:compact-excursions}
   there exists $C < \infty$ (depending only on~$v$ and~$B$)
    such that, for any $\eps>0$,
    \begin{align}
    \label{eq:low-level}
        \Pr \left[ \sum_{k=0}^{n-1} ( \rho_{k+1} - \lambda_k )\1 {\lambda_k < \infty}  > C n^{1+2\eps} x^2 \right]
        & \leq \Pr \left[ \bigcup_{k=0}^{n-1} \left\{ ( \rho_{k+1} - \lambda_k )  \1 {\lambda_k < \infty }  > C n^{2\eps} x^2 \right\} \right] \nonumber\\
&         \leq n \re^{-n^{2\eps}} .
    \end{align}
It follows from~\eqref{eq:compact-excursions} that
    $\{ \lambda_k < \infty \} \subseteq \{ \rho_{k+1} < \infty \}$, up to sets of
    probability zero. Considering high-level excursions,    
    Lemma~2.7.7 of~\cite[p.~76]{mpw-book} shows that, for $k \in \N$, on $\{ \lambda_{k-1} < \infty \}$,
    \begin{equation}
    \label{eq:high-level-1}
    \Pr \bigl[ \lambda_k - \rho_k \geq 4 v y^2 \bigmid \cF_{\rho_k} \bigr]
    \geq \frac{1}{2} \Pr \Bigl[ \max_{\rho_k \leq m \leq \lambda_k} X_m \geq 2(x+y) \Bigmid \cF_{\rho_k} \Bigr] .
    \end{equation}
    For $y >  B$, let $\tau_k (y) := \inf \{ m \geq \rho_k : X_m \geq 2(x+y) \}$;
    by the same argument as~\eqref{eq:compact-excursions}, one has  $\{ \lambda_{k-1} < \infty \} \subseteq \{ \tau_k (y) < \infty \}$. 
    Using hypothesis~\eqref{eq:submartingale}, the submartingale optional stopping theorem (e.g.~Theorem~2.3.7 of~\cite[p.~33]{mpw-book})
    implies that, on $\{ \lambda_{k-1} < \infty \}$,
    \begin{align*}
    2x \leq X_{\rho_k} & \leq \Exp [ X_{\lambda_k \wedge \tau_k (y)} \mid \cF_{\rho_k} ] \\
&    \leq x \Pr [ \lambda_k < \tau_k (y) \mid \cF_{\rho_k} ] + ( 2(x+y)+B) \Pr [ \lambda_k > \tau_k (y) \mid \cF_{\rho_k} ],
    \end{align*}
    since, by~\eqref{eq:bounded-jumps}, $X_{\tau_k (y)} \leq 2(x+y)+B$ whenever $\tau_k(y) < \infty$. 
    It follows that, on $\{ \lambda_{k-1} < \infty \}$,
       \begin{equation}
    \label{eq:high-level-2} 
    \Pr \Bigl[ \max_{\rho_k \leq m \leq \lambda_k} X_m \geq 2(x+y) \Bigmid \cF_{\rho_k} \Bigr] =
     \Pr [ \tau_k (y) < \lambda_k   \mid \cF_{\rho_k} ] \geq \frac{x}{x+2y+ B}.
    \end{equation}
    Combining~\eqref{eq:high-level-1}--\eqref{eq:high-level-2}, we obtain 
     \begin{equation}
    \label{eq:high-level-3} 
     \Pr \bigl[ \lambda_k - \rho_k \geq 4 v y^2 \bigmid \cF_{\rho_k} \bigr] \geq \frac{x}{2x+4y+ 2B}, \text{ on } \{ \lambda_{k-1} < \infty \}.
    \end{equation}
    Then, denoting $M_n := \max_{1 \leq k \leq n} ( \lambda_k - \rho_k )$,
    \begin{align*}
        \Pr \left[ M_n < 4 vy^2 \right]
        & \leq \Exp \bigl[ \Pr  [ M_n < 4 vy^2 \mid \cF_{\rho_{n}} ] \1 { \lambda_{n-1} < \infty } \bigr]
        \\
        & = \Exp \bigl[ \Pr  [ \lambda_n - \rho_n < 4 v y^2 
          \mid \cF_{\rho_{n}} ] \1 { \lambda_{n-1} < \infty }  \1 { M_{n-1} < 4 v y^2 } \bigr] \\
          & \leq \left( 1 - \frac{x}{2x+4y+ 2B} \right)  \Pr \left[ M_{n-1} < 4 vy^2 \right],
    \end{align*}
    using~\eqref{eq:high-level-3} in the final step. 
    This recursion leads to the bound
 \begin{equation}
    \label{eq:high-level-4} 
    \Pr \left[ \sum_{k=1}^{n} ( \lambda_{k} - \rho_k ) \geq 4 v y^2  \right] 
    \geq 
    \Pr \left[ M_n \geq 4 vy^2 \right] 
    \geq 1 -  \left( 1 - \frac{x}{2x+4y+ 2B} \right)^n .
    \end{equation}
    Fix $\gamma \in (0,1/2)$, $\eps \in (0, \frac{1-2\gamma}{4})$, and $\alpha = \frac{1}{1-2\gamma + 4\eps}$. Choose $x=n^{2\alpha \gamma}$ and $y = n^\alpha/(2\sqrt{v})$. Then~\eqref{eq:low-level} and~\eqref{eq:high-level-4}
    combine to show that, for some $c>0$ and all $n \in \N$ sufficiently large, 
    \[ 
     \Pr \left[ \sum_{k=0}^{n-1} ( \rho_{k+1} - \lambda_k ) \leq C n^{1+3 \alpha \eps+4\alpha \gamma}, \, \sum_{k=1}^{n} ( \lambda_{k} - \rho_k ) \geq  n^{2\alpha }  \right] 
     \geq 1 - n \re^{-n^{3\alpha \eps}} - \re^{-c n^{1- \alpha (1-2\gamma)}} \geq 1 - \re^{-n^{2\alpha \eps}},
    \]
    since $1 - \alpha (1-2\gamma) > 3 \alpha \eps$.
    But the latter event implies that $\cL^X_{\lfloor n^{2\alpha} \rfloor} ( n^{2 \alpha \gamma} ) \leq C n^{1+3 \alpha \eps+4\alpha \gamma}$.
    After a change of variables, this means that, for all $n$ large enough, 
    \[ \Pr \left[ \cL^X_{n} ( n^{\gamma} ) \leq C n^{2 \gamma + \frac{1}{2\alpha} + \frac{3 \eps}{2}} \right]
    = \Pr \left[ \cL^X_{n} ( n^{\gamma} ) \leq C n^{\frac{1}{2} + \gamma +  \frac{7 \eps}{2}} \right]
    \geq 1 - \re^{-n^\eps} ,
    \]
    which yields the claimed result.
\end{proof}

\section*{Acknowledgements}
\addcontentsline{toc}{section}{Acknowledgements}

The authors are grateful to Kilian Raschel for helpful discussions on computations of invariant measures, and references to the literature. 
The work of MM and AW was supported by EPSRC grant EP/W00657X/1.
SP was partially supported by
CMUP, member of LASI, which is financed by national funds
through FCT (Funda\c{c}\~ao
para a Ci\^encia e a Tecnologia, I.P.) 
under the project with reference UIDB/00144/2020. Part of this work was undertaken  during the programme ``Stochastic systems for anomalous diffusion'' (July--December 2024) hosted by the  Isaac Newton Institute, under EPSRC grant EP/Z000580/1.

\printbibliography

\end{document}